\documentclass[a4paper,twoside,11pt,reqno]{amsart}
\usepackage[margin=1in]{geometry}
\usepackage[english]{babel}
\usepackage[utf8]{inputenc}
\usepackage{amsmath}
\usepackage{amssymb}
\usepackage{amsfonts}
\usepackage{amsthm}
\usepackage{bm}
\usepackage{mathrsfs}
\usepackage{hyperref}
\usepackage{comment}
\hypersetup{
	colorlinks=true,
	linkcolor=blue,
	filecolor=blue,
	citecolor=purple,      
	urlcolor=cyan,
}
\usepackage{xcolor}
\usepackage{dsfont}
\usepackage{physics}
\usepackage{stmaryrd}
\usepackage{enumitem}

\usepackage{tikz-cd}
\usepackage{tikz}
\usepackage{tikzsymbols}
\usepackage{mathtools}
\usetikzlibrary{matrix}


\usepackage[matrix,arrow,cmtip]{xy} 
\SelectTips{cm}{}
\newdir{(}{{}*!/-5pt/@^{(}}
\newdir{(x}{{}*!/-5pt/@_{(}}
\newdir{+}{{}*!/-9pt/{}}
\newdir{>+}{@{>}*!/-9pt/{}}
\entrymodifiers={+!!<0pt,\fontdimen22\textfont2>}

\theoremstyle{plain}
\newtheorem{theorem}[subsubsection]{Theorem}

\newtheorem{lemma}[subsubsection]{Lemma}
\newtheorem{proposition}[subsubsection]{Proposition}
\newtheorem{corollary}[subsubsection]{Corollary}

\theoremstyle{definition}
\newtheorem{definition}[subsubsection]{Definition}

\newtheorem{example}[subsubsection]{Example}
\theoremstyle{remark}
\newtheorem{remark}[subsubsection]{Remark}

\numberwithin{equation}{section}


	
	\begin{document}

		\title{ Derived Category of certain maximal order on $\mathbb{P}^{2}$}
		\date{}
		\author{Yu Shen}
\address{Department of Mathematics, Michigan State University, 619 Red Cedar Road, East Lansing, MI 48824, USA}
\email{shenyu5@msu.edu}
		
		\maketitle

  \begin{abstract}
  

We show that the moduli space  of $A$-line bundles with minimal second Chern class is a fine moduli space, where $A$ is a maximal quaternion order on $\mathbb{P}^{2}$ ramified along a smooth quartic. We prove that there is a fully faithful embedding from the derived category of this moduli space into the derived category of $A$-modules. Furthermore, we find a semiorthogonal decomposition for $D^{b}(\mathbb{P}^{2},A)$.

\end{abstract}

\section{Introduction}
Moduli spaces of stable sheaves on curves were first constructed by Mumford  \cite{mumford1962projective}. General constructions of moduli spaces of sheaves on higher dimensional varieties were given by Gieseker \cite{Gie77} and Maruyama \cite{Mar77, Mar78}. Let $X$ be a projective variety over an algebraically closed field $k$ with $\operatorname{char}(k)=0$ and $P\in \mathbb{Q}[X]$. Then these works prove the existence of the coarse moduli space $\mathbf{M}_{X,P}$ of stable sheaves  on  $X$ with Hilbert polynomial $P$.  

It is natural to ask when the coarse moduli space $\mathbf{M}_{X,P}$ is a fine moduli space. Equivalently, we want to know when there exists a universal family over $\mathbf{M}_{X, P} \times X$. In \cite{caldararu2000derived}, $\operatorname{C\breve{a}ld\breve{a}raru }$ provides a sufficient condition to ensure when $\mathbf{M}_{X,P}$ is a fine moduli space. He  shows that if the Brauer group $\operatorname{Br}(\mathbf{M}_{X,P})$ of the moduli space $\mathbf{M}_{X,P}$ is trivial, then there exists a universal sheaf $\mathcal{E}$ on $\mathbf{M}_{X,P}\times X$. Note that this is not a necessary condition. In particular, even in the situation $\operatorname{Br}(\mathbf{M}_{X,P})\not =0$, $\mathbf{M}_{X,P}$  could still be a fine moduli space. See \cite{mukai1984moduli} for such examples.

We are interested in coarse and fine moduli spaces of sheaves over certain noncommutative algebras called {\it orders} (Definition \ref{order}) over smooth projective varieties. Given a smooth projective variety $X$, and a central simple algebra $K$ over the function field $K(X)$, an order $A$ over $X$ is a coherent torsion free subsheaf of $K$ whose generic stalk is $K$. An order is a {\it maximal order} (Definition \ref{maximal}) if it is maximal with respect to inclusion. The smoothness of $X$ guarantees its existence for any central simple algebra over $K(X)$.  

Simpson, Hoffman, Stuhler, Yoshioka, and Lieblich  have observed that much of the general theory of moduli spaces of sheaves extends to sheaves of orders over a projective variety, as seen in \cite{simpson1990moduli}, \cite{hoffmann2005moduli}, \cite{Y+06} and \cite{lieblich2007moduli}. 
In \cite{hoffmann2005moduli}, Hoffmann and Stuhler define the moduli functor of simple torsion free sheaves over an order $A$ on $X$ (Definition \ref{Definition of moduli func}). They show that the coarse moduli space $\textbf{M}_{A/X, P}$ of $A$-modules with Hilbert polynomial $P$ always exists as a projective scheme over $k$. 

In this paper, similar to the case of moduli spaces of stable sheaves,  we first show  the Brauer group of the moduli space is the obstruction to the existence of a universal sheaf.

\begin{theorem} [Theorem \ref{Brauer group of moduli space}]\label{Brauer group obstruction}
    If $\operatorname{Br}(\mathbf{M}_{A/X,P})=0$, then there exist a universal family $\mathcal{E}_{A}$.

\end{theorem}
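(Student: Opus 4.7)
The plan is to follow Căldăraru's strategy from \cite{caldararu2000derived}, adapted to sheaves of $A$-modules: a universal family always exists on the moduli \emph{stack}, and the obstruction to descending it to the coarse moduli space $\mathbf{M}_{A/X,P}$ is precisely a class in $\operatorname{Br}(\mathbf{M}_{A/X,P})$.

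First, I would consider the moduli stack $\mathcal{M}$ of simple torsion-free $A$-modules with Hilbert polynomial $P$. Since such sheaves have automorphism group $\mathbb{G}_m$ (scalar multiplications, coming from the center of $A$), the canonical map $\mathcal{M} \to \mathbf{M}_{A/X,P}$ is a $\mathbb{G}_m$-gerbe, with a gerbe class $\alpha \in H^2(\mathbf{M}_{A/X,P}, \mathbb{G}_m)$. By the very definition of the moduli problem, $\mathcal{M}$ carries a tautological universal family $\tilde{\mathcal{E}}$ of $A$-modules on $\mathcal{M} \times X$, and this family has weight $1$ with respect to the inertial $\mathbb{G}_m$-action on the gerbe.

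Next, I would invoke the standard dictionary between weight-$1$ sheaves on a $\mathbb{G}_m$-gerbe and $\alpha$-twisted sheaves on its coarse space: the family $\tilde{\mathcal{E}}$ descends to an $\alpha$-twisted universal sheaf $\mathcal{E}$ on $\mathbf{M}_{A/X,P} \times X$. One checks that $\alpha$ is a torsion class — it is killed on any étale cover on which a local universal family exists, via the Hoffmann--Stuhler construction — so it lies in $\operatorname{Br}(\mathbf{M}_{A/X,P})$. Under the hypothesis $\operatorname{Br}(\mathbf{M}_{A/X,P}) = 0$, the class $\alpha$ vanishes, the gerbe admits a section, and the twist disappears, yielding a genuine universal family $\mathcal{E}_A$ on $\mathbf{M}_{A/X,P} \times X$.

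The main technical point I anticipate is verifying that the gerbe class $\alpha$ is represented by an honest Azumaya algebra on $\mathbf{M}_{A/X,P}$ (hence lies in the algebraic Brauer group, and in particular is torsion) rather than in the abstract analytic/étale $H^2$. Following Căldăraru, I would do this by an explicit Čech construction: choose an étale cover $\{U_i \to \mathbf{M}_{A/X,P}\}$ on which local universal $A$-modules $\mathcal{E}_i$ exist, and assemble the sheaves $\mathcal{H}om(\mathcal{E}_i, \mathcal{E}_j)$ into an Azumaya algebra representing $\alpha$. With this identification in hand, the vanishing of $\operatorname{Br}(\mathbf{M}_{A/X,P})$ is exactly the condition needed to untwist $\mathcal{E}$.
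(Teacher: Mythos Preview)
Your proposal is correct and follows essentially the same strategy as the paper: both adapt C\u{a}ld\u{a}raru's argument by constructing local universal families on an \'etale cover, gluing them into an $\alpha$-twisted sheaf, and showing that $\alpha$ lies in the Brauer group. The only difference is one of language: you phrase the construction in terms of the moduli stack as a $\mathbb{G}_m$-gerbe over the coarse space, whereas the paper works directly with the GIT presentation (the Quot-scheme parameter space $R$ as a principal $\operatorname{PGL}(N)$-bundle over $\mathbf{M}_{A/X,P}$, and $\operatorname{GL}(N)$-linearized line bundles of central weight~1) to produce the local universal sheaves; both routes invoke C\u{a}ld\u{a}raru's Proposition~3.3.4 (Lemma~\ref{twisted sheaf} here) to conclude that $\alpha$ is a genuine Brauer class.
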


The theorem above holds for any order $A$ on $X$. Instead of considering arbitrary orders, we focus on maximal orders since they have useful properties (Lemma \ref{maximal order}). Classification of maximal orders on surfaces is a central problem in noncommutative algebraic geometry \cite{Artin2003orders, CK03, CI05}. It is also an interesting problem to study moduli spaces of sheaves over maximal orders \cite{chan2011moduli, lerner2013line, reede2013moduli}. In this paper, we will focus on a specific family of maximal orders over $\mathbb{P}^{2}$. This family is discussed below.

 Let $R$ be a  smooth quartic on $\mathbb{P}^{2}$. Chan explicitly constructs  maximal quaternion orders ramified along $R$ up to Morita equivalence via the noncommutative cyclic covering trick \cite{chan2005noncommutative}. Let $A$ be a maximal quaternion order ramified on the smooth quartic $R$. Chan and Kulkarni show that the second Chern class for $A$-line bundles has a lower bound once the first Chern class is fixed. Fixing the minimal second Chern class, they show that the coarse moduli space of $A$-line bundles (Definition \ref{line bundle}), denoted as $C$, is a smooth projective curve of genus 2 \cite{chan2011moduli}. Though the construction of $C$ is explicit, it is very technical. So, it was not  clear how to directly relate the curve $C$ to the order $A$. In this paper, we establish the relationship between the moduli space $C$ and maximal order $A$ via the derived category. Our main results are the following theorems.

\begin{theorem}[Theorem \ref{The last one}] \label{embedding}
Let $C$ be the moduli space of $A$-line bundles described above. Then there exists a universal family  $\mathcal{E}_{A}$. We also prove that  the Fourier-Mukai transform $$\Phi_{\mathcal{E}_{A}}: D^{b}(C)\to D^{b}(\mathbb{P}^{2}, A),$$ 
 with the kernel $\mathcal{E}_{A}$ is fully faithful. Here $D^{b}(\mathbb{P}^{2}, A)$ is the bounded derived category of coherent sheaves of left $A$-modules.    
    \end{theorem}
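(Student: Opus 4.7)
The plan is to prove the two assertions of the theorem in turn. First, for the existence of $\mathcal{E}_A$, Theorem \ref{Brauer group obstruction} reduces matters to verifying $\operatorname{Br}(C)=0$. Since $C$ is a smooth projective curve of genus $2$ over an algebraically closed field of characteristic zero, Tsen's theorem gives $\operatorname{Br}(C)=0$, and hence a universal family on $C\times\mathbb{P}^{2}$ exists.

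For the fully faithfulness of $\Phi_{\mathcal{E}_A}$, the approach is to invoke the Bondal--Orlov criterion. Writing $\mathcal{E}_{x}:=\Phi_{\mathcal{E}_A}(\mathcal{O}_{x})$ for the $A$-line bundle parametrized by a closed point $x\in C$, the criterion reduces the problem to the following three verifications, for arbitrary closed points $x,y\in C$, with all Ext groups taken in $D^{b}(\mathbb{P}^{2},A)$:
\begin{enumerate}
    \item $\operatorname{Hom}_{A}(\mathcal{E}_{x},\mathcal{E}_{x})=k$;
    \item $\operatorname{Ext}^{i}_{A}(\mathcal{E}_{x},\mathcal{E}_{y})=0$ whenever $x\neq y$ and for every $i$;
    \item $\operatorname{Ext}^{i}_{A}(\mathcal{E}_{x},\mathcal{E}_{x})=0$ for $i>\dim C=1$.
\end{enumerate}

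Condition (i) is simplicity of $A$-line bundles and follows from their stability. In (ii), the vanishing of $\operatorname{Hom}_{A}(\mathcal{E}_{x},\mathcal{E}_{y})$ for $x\neq y$ is standard: distinct stable objects with the same reduced Hilbert polynomial admit no nonzero morphisms. The higher Ext vanishings in (ii) and (iii) come from Serre duality for the order $A$, in the form $\operatorname{Ext}^{i}_{A}(F,G)\cong\operatorname{Ext}^{2-i}_{A}(G,F\otimes_{A}\omega_{A})^{\vee}$, where $\omega_{A}$ is the dualizing bimodule of $A$. Tensoring an $A$-line bundle with $\omega_{A}$ shifts the first Chern class so that $\mathcal{E}_{x}\otimes_{A}\omega_{A}$ has strictly smaller slope, forcing the relevant Hom groups, and hence the $i=2$ Ext groups, to vanish by a slope argument against the stable object $\mathcal{E}_{y}$ (resp.\ $\mathcal{E}_{x}$). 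The remaining case $\operatorname{Ext}^{1}_{A}(\mathcal{E}_{x},\mathcal{E}_{y})=0$ for $x\neq y$ will be extracted from a Riemann--Roch calculation: since $\chi(\mathcal{E}_{x},\mathcal{E}_{y})$ depends only on the Chern character, it equals $\chi(\mathcal{E}_{x},\mathcal{E}_{x})$, which evaluates to $0$ using $\dim\operatorname{Hom}_{A}(\mathcal{E}_{x},\mathcal{E}_{x})=1$, the deformation-theoretic identification $\dim\operatorname{Ext}^{1}_{A}(\mathcal{E}_{x},\mathcal{E}_{x})=\dim T_{x}C=1$, and the $i=2$ vanishing just obtained.

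The main obstacle will be establishing and applying Serre duality and Riemann--Roch for the noncommutative smooth variety $(\mathbb{P}^{2},A)$; in particular, identifying the dualizing bimodule $\omega_{A}$ of our specific maximal quaternion order ramified along the smooth quartic $R$, and verifying that tensoring with it shifts the first Chern class as required for the slope argument. Once these noncommutative analogues of classical tools are in place, the remaining verifications of the Bondal--Orlov conditions proceed in parallel with the classical case of a Fourier--Mukai transform from a genus $2$ curve to a surface.
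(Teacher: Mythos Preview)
Your proof of the existence of $\mathcal{E}_A$ via Tsen's theorem matches the paper exactly.

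For fully faithfulness, your overall strategy via the Bondal--Orlov criterion is the same as the paper's, and your verifications of conditions (i), (iii), and the $i=0,2$ cases of (ii) are essentially correct. One caution: you should say ``generically simple'' rather than ``stable'', since some fibres $\mathcal{E}_{c_i}$ are only strictly $\mu$-semistable as $\mathcal{O}_Y$-sheaves; what the Chern-class/slope argument actually uses is the paper's Proposition~\ref{Simple module}, that any nonzero $A$-map between generically simple torsion-free modules is injective.

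There are two genuine points of divergence. First, the paper does \emph{not} apply Bondal--Orlov directly in the noncommutative category $D^b(\mathbb{P}^2,A)$. It uses Kuznetsov's fully faithful embedding $\Psi:D^b(\mathbb{P}^2,A)\hookrightarrow D^b(X_A)$ into the derived category of the associated conic bundle, observes that the composite $\Psi\circ\Phi_{\mathcal{E}_A}$ is an honest Fourier--Mukai functor between smooth projective varieties (Lemma~\ref{Fourier-Mukai}), and applies the classical criterion there. Your direct approach would require either citing a noncommutative version of Bondal--Orlov or carrying out this same reduction.

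Second, and more interestingly, your route to $\operatorname{Ext}^1_A(\mathcal{E}_x,\mathcal{E}_y)=0$ for $x\neq y$ via the Euler-characteristic identity $\chi_A(\mathcal{E}_x,\mathcal{E}_y)=\chi_A(\mathcal{E}_x,\mathcal{E}_x)=1-1+0=0$ is considerably shorter than what the paper does. The paper introduces the auxiliary algebra $B=\mathcal{E}nd_Y(A)$ to obtain $\operatorname{ext}^i_A\le\operatorname{ext}^i_Y$, and then runs a four-case analysis (Lemma~\ref{EXTA1}) depending on whether $t_0,t_1$ lie among the six Weierstrass points of the hyperelliptic cover $C\to\mathbb{P}^1$, using the explicit splittings $\mathcal{E}_{c_i}\simeq\mathcal{O}_Y(E_j)\oplus\mathcal{O}_Y(L_{2j})$ at those points. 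Your argument avoids all of this, but its linchpin---that $\chi_A$ depends only on the Chern character---is not automatic: there is no off-the-shelf Hirzebruch--Riemann--Roch expressing $\chi_A(M,N)$ purely in terms of $\operatorname{ch}({}_YM)$ and $\operatorname{ch}({}_YN)$. What does hold, and suffices, is that $\chi_A(\mathcal{E}_x,\mathcal{E}_y)=\chi\bigl(\mathbb{P}^2,\mathcal{H}om_A(\mathcal{E}_x,\mathcal{E}_y)\bigr)$ is locally constant as $x,y$ vary over the connected curve $C$, because $\mathcal{E}_A$ is locally projective over $q^*A$ and hence $\mathcal{H}om_{q^*A}(\mathcal{E}_A,-)$ produces a flat family of $\mathcal{O}_{\mathbb{P}^2}$-sheaves; equivalently, after embedding in $D^b(X_A)$ one may invoke ordinary Riemann--Roch on $X_A$. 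If you make this step precise, your argument is a legitimate and cleaner alternative to the paper's case analysis.
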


\begin{theorem}[Theorem \ref{semiorthgonal}]\label{decomposition}
    We have a semiorthogonal decomposition for $D^{b}(\mathbb{P}^{2},A)$.
     $$D^{b}(\mathbb{P}^{2},A)=\langle D^{b}(C), E\rangle, $$
where $E$ is a specific $A$-line bundle. 
 \end{theorem}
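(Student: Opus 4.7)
The plan is to build on Theorem~\ref{embedding}, which already gives the fully faithful Fourier--Mukai functor $\Phi := \Phi_{\mathcal{E}_A}: D^b(C) \hookrightarrow D^b(\mathbb{P}^2, A)$. Setting $\mathcal{T} := \Phi(D^b(C))$, I then need to (i) specify the exceptional $A$-line bundle $E$, (ii) verify semiorthogonality $\operatorname{Hom}^\bullet(E, \mathcal{T}) = 0$, and (iii) prove that $\mathcal{T}$ together with $E$ generates $D^b(\mathbb{P}^2, A)$. The natural candidate for $E$ is a distinguished $A$-line bundle outside the minimal-$c_2$ locus parameterized by $C$, typically of the form $A(n)$ (or a twist thereof) chosen so that the Euler pairing with the $A$-line bundles classified by $C$ vanishes; the precise choice is forced by the vanishing computation below.

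For semiorthogonality, I would use that the skyscraper sheaves $\{k(c)\}_{c\in C}$ form a spanning class of $D^b(C)$, so that $\operatorname{Hom}^\bullet\!\bigl(E, \Phi(\mathcal{G})\bigr)=0$ for every $\mathcal{G} \in D^b(C)$ reduces to proving $\operatorname{Ext}^k_A(E, \mathcal{F}_c) = 0$ for all $k$ and all closed points $c \in C$, where $\mathcal{F}_c := \mathcal{E}_A|_{\{c\}\times \mathbb{P}^2}$ is the $A$-line bundle corresponding to $c$. These Ext groups can be computed directly from the explicit construction of $\mathcal{E}_A$ and Chan--Kulkarni's description of the cohomology of $A$-line bundles, together with Serre duality on the noncommutative surface $(\mathbb{P}^2, A)$.

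For generation, one standard strategy is to reduce to a known generating collection of $D^b(\mathbb{P}^2, A)$ and show each generator lies in $\langle \mathcal{T}, E\rangle$. Starting from Beilinson's resolution of the diagonal on $\mathbb{P}^2$ and tensoring with $A$, one obtains that $D^b(\mathbb{P}^2, A)$ is generated by the three $A$-twisted bundles $A(-i)$ for $i = 0, 1, 2$. It then suffices to express each of these in terms of $E$ and objects of $\mathcal{T}$, which amounts to computing the projection of $A(-i)$ onto the right orthogonal $\mathcal{T}^\perp$ and identifying it with a shift of $E$. As a consistency check, the numerical decomposition $K_0(D^b(\mathbb{P}^2, A)) \cong K_0(C) \oplus \mathbb{Z}[E]$ should match under the Euler pairing, which fixes the class of $E$ numerically.

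The main technical obstacle is the generation step. While verifying $\operatorname{Hom}^\bullet(E, \mathcal{T}) = 0$ reduces to a finite cohomological computation, showing that $\mathcal{T}$ and $E$ together span $D^b(\mathbb{P}^2, A)$ requires explicit control over how the three building blocks $A(-i)$ sit in the putative decomposition; promoting the numerical $K_0$ match into an actual derived-category statement likely calls for either an explicit filtration/right-mutation argument applied to each $A(-i)$, or exhibiting a (partial) tilting object whose direct summands obviously belong to $\langle \mathcal{T}, E \rangle$. This is where the geometry of the ramification quartic $R$ and the specific Morita class of $A$ will have to enter in a non-formal way.
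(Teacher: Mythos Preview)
Your semiorthogonality strategy matches the paper's: the specific $E$ is $A\otimes_Y\mathcal{O}_Y(H)$, and the paper checks $\operatorname{Ext}^i_A(E,\mathcal{E}_t)=0$ for all $i$ and all $t\in C$ exactly as you propose (using that the $\mathcal{O}_t$ span $D^b(C)$, the left adjoint of $\Phi_{\mathcal{E}_A}$, and direct cohomology computations via Lemma~\ref{CK}).

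Where the approaches diverge is generation. You propose to generate via Beilinson, i.e.\ show each $A(-i)$ lies in $\langle \mathcal{T},E\rangle$, and you correctly flag this as the hard step. The paper bypasses this computation entirely. It invokes an \emph{external} semiorthogonal decomposition (Lemma~\ref{Jacobian}, due to Bernardara, ultimately resting on Kuznetsov's description of $D^b$ of a conic bundle): there already exists some exceptional object $E'$ and some smooth curve $\Gamma$ with $J(\Gamma)\cong J(X_A)$ such that $D^b(\mathbb{P}^2,A)=\langle D^b(\Gamma),E'\rangle$. Combining this with Theorem~\ref{Jacobian main} ($J(C)\cong J(X_A)$) and Torelli for genus~$2$ curves gives $\Gamma\cong C$, hence ${}^\perp D^b(C)=\langle E'\rangle$ is generated by a single exceptional object. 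Since $A\otimes_Y\mathcal{O}_Y(H)$ is an exceptional object already shown to lie in ${}^\perp D^b(C)$, one concludes $\langle A\otimes_Y\mathcal{O}_Y(H)\rangle=\langle E'\rangle={}^\perp D^b(C)$.

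So your outline is not wrong, but the generation step you isolate as the obstacle is precisely what the paper avoids: rather than mutating $A(-i)$ by hand, it leverages the known decomposition of the conic bundle together with the Jacobian identification to conclude that the orthogonal to $D^b(C)$ is automatically one-dimensional in the appropriate sense. Your Beilinson route could in principle work but would require the explicit filtration arguments you mention; the paper's route trades that for the (already established) machinery of Kuznetsov and Bernardara.
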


We will give the detailed description of $E$ in section \ref{section 5}. It is known that  there is a one-to-one correspondence between the maximal quaternion orders $A$ ramified along $R$ and the standard conic bundles 
 $\pi : X_{A} \to \mathbb{P}^{2}$ ramified along $R$ \cite{artin1972some, sarkisov1983conic}. The moduli space $C$ is also related to the geometry of the threefold $X_{A}$.

\begin{theorem}[Theorem \ref{Jacobian main}]\label{Theorem 1.0.4} \label{Jacobian I}
    Let $J(X_{A})$ be the intermediate Jacobian of $X_{A}$. Then $J(C)\cong J(X_{A})$ as principally polarized abelian varieties.
\end{theorem}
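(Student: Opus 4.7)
The plan is to combine the classical Beauville--Prym description of the intermediate Jacobian of a standard conic bundle with the derived-categorical decomposition provided by Theorem \ref{decomposition}. By Beauville's theorem on standard conic bundles over rational surfaces, for $\pi : X_A \to \mathbb{P}^{2}$ with smooth quartic discriminant $R$ there is a principally polarized isomorphism
$$J(X_A) \;\cong\; \operatorname{Prym}(\tilde R / R),$$
where $\tilde R \to R$ is the étale double cover parametrising the two rulings in each degenerate fibre of $\pi$. Since $g(R)=3$ and $g(\tilde R)=5$ by Riemann--Hurwitz, both $J(X_A)$ and $J(C)$ are two-dimensional ppavs, so the theorem reduces to producing a ppav isomorphism $J(C)\cong\operatorname{Prym}(\tilde R/R)$.

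To obtain this, I would pass through the derived category of $X_A$ using Kuznetsov's semiorthogonal decomposition for conic bundles,
$$D^b(X_A) \;=\; \langle D^b(\mathbb{P}^{2},\mathcal{B}_0),\ \pi^* D^b(\mathbb{P}^{2})\rangle,$$
where $\mathcal{B}_0$ is the sheaf of even parts of the Clifford algebra of the quadric bundle $\pi$. Both $\mathcal{B}_0$ and the maximal order $A$ are maximal quaternion orders on $\mathbb{P}^{2}$ ramified exactly along $R$, and the classification of such orders up to Morita equivalence by their ramification data gives $D^b(\mathbb{P}^{2},\mathcal{B}_0)\simeq D^b(\mathbb{P}^{2},A)$. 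Substituting Theorem \ref{decomposition} together with Beilinson's collection $\{\mathcal{O},\mathcal{O}(1),\mathcal{O}(2)\}$ on $\mathbb{P}^{2}$ then produces
$$D^b(X_A) \;=\; \langle D^b(C),\,E,\,\pi^*\mathcal{O}_{\mathbb{P}^{2}},\,\pi^*\mathcal{O}_{\mathbb{P}^{2}}(1),\,\pi^*\mathcal{O}_{\mathbb{P}^{2}}(2)\rangle,$$
a semiorthogonal decomposition of $D^b(X_A)$ in which $D^b(C)$ is the only non-exceptional piece. The standard Jacobian-transfer principle (in the form established by Bernardara--Bolognesi and others for conic-bundle threefolds) then applies: the Fourier--Mukai kernel embedding $D^b(C)\hookrightarrow D^b(X_A)$ furnishes an algebraic correspondence on $C\times X_A$ whose Abel--Jacobi map realises an isomorphism of ppavs $J(C)\cong J(X_A)$, while the exceptional summands contribute nothing to the intermediate Jacobian. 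Combined with Beauville's isomorphism from the first step, this gives the theorem.

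The main obstacle, I expect, lies in the middle step: explicitly matching Chan's maximal order $A$, produced via the noncommutative cyclic covering trick of \cite{chan2005noncommutative}, with the even Clifford algebra $\mathcal{B}_0$ of the quadric bundle $\pi$ not merely as Brauer classes on $\mathbb{P}^{2}\setminus R$ but as genuinely Morita-equivalent orders representing the same $\mu_2$-gerbe on $\mathbb{P}^{2}$. Ruling out a possible twist by a Brauer class pulled back from $\mathbb{P}^{2}$ requires a direct computation comparing the local presentations of $A$ and $\mathcal{B}_0$ along $R$; once this identification is in hand, everything else is formal bookkeeping with semiorthogonal decompositions and a standard application of the Abel--Jacobi machinery to the Fourier--Mukai correspondence of Theorem \ref{embedding}.
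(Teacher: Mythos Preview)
Your proposal has a genuine circularity: you invoke Theorem~\ref{decomposition} (the semiorthogonal decomposition $D^{b}(\mathbb{P}^{2},A)=\langle D^{b}(C),E\rangle$) as an input, but in the paper's logical structure that result is proved \emph{after} and \emph{using} Theorem~\ref{Jacobian main}. Concretely, the proof of Theorem~\ref{semiorthgonal} identifies $C$ with the curve $\Gamma$ supplied by Lemma~\ref{Jacobian} precisely by appealing to $J(C)\simeq J(X_{A})\simeq J(\Gamma)$, which is exactly the statement you are trying to establish. So the step ``substitute Theorem~\ref{decomposition}'' is not available to you here.

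The fix is that you do not need the full semiorthogonal decomposition at all; you only need the fully faithful Fourier--Mukai embedding. This is what the paper does: Theorem~\ref{embedding} gives $\Phi_{\mathcal{E}_{A}}\colon D^{b}(C)\hookrightarrow D^{b}(\mathbb{P}^{2},A)$, Kuznetsov's functor $\Psi$ then embeds this into $D^{b}(X_{A})$ (Lemma~\ref{Fourier-Mukai}), and \cite[Proposition~4.4]{bernardara2013derived} converts the resulting Fourier--Mukai embedding $D^{b}(C)\hookrightarrow D^{b}(X_{A})$ into an \emph{injective} morphism $J(C)\hookrightarrow J(X_{A})$ of ppavs. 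Surjectivity comes from the dimension count $\dim J(C)=g(C)=2=\dim J(X_{A})$, not from knowing that the orthogonal complement is generated by exceptional objects. Your detour through the Prym variety to compute $\dim J(X_{A})=2$ is fine but unnecessary; the paper simply quotes this as well known.

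Finally, the ``main obstacle'' you anticipate---matching $A$ with the even Clifford algebra $\mathcal{B}_{0}$ as Morita-equivalent orders rather than merely as Brauer classes---is not an obstacle: Section~\ref{section 2} already records (via \cite[Theorem~4.8]{chan2012conic}) that $A\simeq Cl_{0}(Q)$ as sheaves of algebras, so Kuznetsov's functor $\Psi$ applies directly without any further identification.
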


We relate these theorems to some well-known theorems in the Sarkisov program. Let $\pi: B \to S $ be a standard conic bundle over a minimal rational surface $S$. It is worth mentioning that Shokurov has shown that $B$ is rational if and only if there exist smooth projective curves $\{\Gamma_{i} \}_{i=1}^{k}$ such that the intermediate Jacobian $J(B) \cong \bigoplus_{i=1}^{k}J(\Gamma_{i})$ as principally polarized abelian varieties \cite{Sho84}. However, in Shokurov's theorem, the curves $\Gamma_{i}$  were not shown to be moduli spaces of sheaves.

Note that for any standard conic bundle $B$ over $\mathbb{P}^{2}$ ramified along a smooth quartic, it is well known that there exists a smooth projective curve $\Gamma$ of genus 2 such that $J(B)=J(\Gamma)$. At the same time, $B$ can be constructed from a maximal order $A_{B}$ associated to $B$.
 By Theorem \ref{Jacobian I}, we get the following corollary.
\begin{corollary}[Corollary \ref{main corollary}]
Let  $B$ be a standard conic bundle over $\mathbb{P}^{2}$ ramified along a smooth quartic and $\Gamma$ be a smooth curve of genus 2 such that $J(B)\cong J(\Gamma)$. Then $\Gamma$ is a moduli space of $A_{B}$-line bundles with appropriate Chern classes.
\end{corollary}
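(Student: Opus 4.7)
The plan is to read off the corollary as a direct consequence of Theorem \ref{Jacobian I} combined with the classical Torelli theorem for curves. The key point is that the hypothesis fixes $J(B)$ up to isomorphism of principally polarized abelian varieties, and among smooth curves of genus $2$ this determines the curve.

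First, I would invoke the correspondence between standard conic bundles $\pi\colon B \to \mathbb{P}^{2}$ ramified along a smooth quartic $R$ and maximal quaternion orders on $\mathbb{P}^{2}$ ramified along $R$ (referenced in the introduction via \cite{artin1972some, sarkisov1983conic}) to produce the maximal order $A_{B}$, so that $X_{A_{B}} = B$. Next, applying the construction of Chan--Kulkarni \cite{chan2011moduli}, I would let $C$ denote the coarse moduli space of $A_{B}$-line bundles with fixed first Chern class and minimal second Chern class; by loc.\ cit.\ $C$ is a smooth projective curve of genus $2$. By Theorem \ref{Jacobian I}, there is an isomorphism $J(C)\cong J(X_{A_{B}}) = J(B)$ of principally polarized abelian varieties.

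Combining this with the given isomorphism $J(B)\cong J(\Gamma)$ yields an isomorphism $J(C)\cong J(\Gamma)$ as principally polarized abelian varieties. Since both $C$ and $\Gamma$ are smooth projective curves of genus $2$ (hence in particular of genus at least $2$), the Torelli theorem implies $\Gamma \cong C$. Therefore $\Gamma$ itself is isomorphic to the moduli space of $A_{B}$-line bundles with the chosen Chern classes, which is exactly the statement of the corollary.

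The only nontrivial input is the identification $J(C)\cong J(B)$, which is Theorem \ref{Jacobian I}; everything else is formal once that theorem is in hand, so the main obstacle is not in the corollary itself but in Theorem \ref{Jacobian I}. One minor subtlety worth flagging is to make sure that the isomorphism produced respects principal polarizations (so that Torelli applies), but this is guaranteed by the statement of Theorem \ref{Jacobian I} and by the standard fact that a genus-$2$ curve is recovered from its Jacobian as a principally polarized abelian variety via the theta divisor.
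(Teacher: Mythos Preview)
Your proposal is correct and matches the paper's approach. The paper does not give a separate proof for this corollary; it simply states that it follows from Theorem~\ref{Jacobian main} (your Theorem~\ref{Jacobian I}), and the implicit argument is exactly the one you spell out: identify $B$ with $X_{A_B}$, let $C$ be the Chan--Kulkarni moduli curve, use $J(C)\cong J(B)\cong J(\Gamma)$ as principally polarized abelian varieties, and apply Torelli for genus~$2$ curves to conclude $\Gamma\cong C$. Indeed the paper invokes precisely this Torelli step later in the proof of Theorem~\ref{semiorthgonal}, so your reasoning is fully aligned with the paper's.
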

\subsection{Outline of Paper} In section \ref{section 2}, we recall the definition and properties of maximal orders. We also review the construction  of the maximal order $A$ over $\mathbb{P}^{2}$ ramified along a smooth quartic.  In section \ref{section 3}, we recall the definition of the moduli functor of simple modules over an order and prove Theorem \ref{Brauer group obstruction}. In section \ref{section 4}, we study the derived category $D^{b}(\mathbb{P}^{2}, A)$ in detail. In this section, we also prove Theorems \ref{embedding} and \ref{Theorem 1.0.4}. In section \ref{section 5}, we give the semiorthogonal decomposition of $D^{b}(\mathbb{P}^{2},A)$.

\subsection{Acknowledgments} I would like to thank my advisor Rajesh Kulkarni for proposing this problem, helpful discussions and proof reading.  The author also thanks  Zengrui Han, Nick Rekuski, Linhui Shen, Joe Waldron, Shitan Xu and Yizhen Zhao for many helpful discussions and comments.

The author was partially supported by NSF grant DMS-2101761.

\subsection{Notation}
    In this paper, the field $k$ is always an algebraically closed field of characteristic 0. All varieties will be smooth projective over $k$. For simplicity, we will write $\mathbb{P}^{2}$ for  $\mathbb{P}_{k}^{2}$.

    Let $X$ and $Y$ be smooth projective varieties, and let $\mathcal{P}\in D^{b}(X\times Y)$.  We will denote the Fourier-Mukai transform from $D^{b}(X)$ to $D^{b}(Y)$ with kernel $\mathcal{P}$ as $\Psi_{\mathcal{P}}$.

\section{Preliminaries}\label{section 2}

\subsection{Maximal orders}

In this section, we introduce the definition and basic properties of maximal orders.

Let $X$ be a smooth projective variety over $k$ and $A$ be a sheaf of associative $\mathcal{O}_{X}$-algebras.
\begin{definition}\label{order}
    We say $A$ is an \textit{order} on $X$ if it satisfies the following properties:
\begin{itemize} 

\item $A$ is coherent and  torsion free as an $\mathcal{O}_{X}$-module.

\item The generic stalk of $A$,  $A_{\eta}:=A\otimes_{X}k(X)$, is a central simple algebra over the function field $k(X)$.

\end{itemize}

 A \textit{quaternion order} is an order $A$ which is locally free of rank four as an $\mathcal{O}_{X}$-module.

\end{definition}

\begin{example}
    
An Azumaya algebra on $X$ is an order. 
\end{example}

\begin{definition}\label{maximal}
   Fix a central simple algebra $K$ over $k(X)$, and let $$S:=\{A \mid A \ \text{is an order on X and} \  A_{\eta}=K \}. $$ We order the elements in $S$ by inclusion.  An order $A$ is \textit{maximal} if it is  maximal  in $S$.

\end{definition}
\begin{lemma}[{\cite[Proposition 1.8.2]{Artin2003orders}}] \label{maximal order}  Maximal orders have the following nice properties.
 \begin{enumerate} 
\item A maximal order is a reflexive sheaf as an $\mathcal{O}_{X}$-module.

\item Every order is contained in a maximal order.

\item An Azumaya algebra is a maximal order.
\end{enumerate}
\end{lemma}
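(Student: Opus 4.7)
My plan is to treat the three parts of the lemma separately, since they are of rather different flavors.

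For part (i), I would start with a maximal order $A$ and compare it to its double dual $A^{\vee\vee}:=\mathscr{H}\!om_{\mathcal{O}_X}(\mathscr{H}\!om_{\mathcal{O}_X}(A,\mathcal{O}_X),\mathcal{O}_X)$. The canonical map $A\to A^{\vee\vee}$ is injective because $A$ is torsion free. The key step is to promote this to a comparison inside the ambient $S$: on the open locus where $A$ is locally free (whose complement has codimension $\geq 2$, since $A$ is torsion free and coherent), $A$ already agrees with $A^{\vee\vee}$, so the multiplication on $A$ induces a bilinear pairing on $A^{\vee\vee}$ over that open set. Since $A^{\vee\vee}$ is reflexive, hence $S_2$, this pairing extends uniquely to an $\mathcal{O}_X$-algebra structure on all of $A^{\vee\vee}$. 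Localizing at the generic point shows that the generic stalk is unchanged, so $A^{\vee\vee}\in S$. Maximality of $A$ then forces $A=A^{\vee\vee}$, i.e.\ $A$ is reflexive.

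For part (ii), the natural route is Zorn's lemma on the poset $S$ ordered by inclusion, and the nontrivial point is to exhibit an upper bound for an arbitrary chain $\{A_\alpha\}$. I would take the sheafified union inside the constant sheaf attached to $K$ and pass to its reflexive hull. The hard input is coherence, which I would extract from the classical local theory of orders: for any order $A_0$, the dual lattice $A_0^\ast=\{x\in K \mid \mathrm{tr}_{K/k(X)}(xA_0)\subseteq \mathcal{O}_X\}$ with respect to the reduced trace form on $K$ is coherent and contains every order that contains $A_0$. Thus every chain starting from $A_0$ is bounded above by the coherent sheaf $A_0^\ast$; Noetherianity gives local stabilization on an affine cover, and the reflexive hull of the union is then coherent and still an order.

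For part (iii), maximality of an order is a local property on $X$, and an Azumaya algebra is \'etale locally isomorphic to a matrix algebra $\mathrm{Mat}_n(\mathcal{O}_X)$. The question therefore reduces to showing that $\mathrm{Mat}_n(R)$ is a maximal order in $\mathrm{Mat}_n(\mathrm{Frac}\,R)$ whenever $R$ is a regular local ring. Using matrix units, any strictly larger order would have to contain an element whose entries lie outside $R$ but are still integral over $R$, contradicting normality of $R$. The main obstacle is the boundedness input in (ii): the Zorn setup is routine, but proving that the union of a chain of orders stays coherent is the genuine technical heart and genuinely uses the trace-dual lattice argument from the classical local theory. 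Parts (i) and (iii) reduce to either a reflexivity extension or a standard computation for matrix algebras once the correct framework is chosen.
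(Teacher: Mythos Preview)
The paper does not give its own proof of this lemma: it is stated with a citation to \cite[Proposition 1.8.2]{Artin2003orders} and used as a black box. So there is nothing to compare your argument against in the paper itself.

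That said, your outline is correct and is essentially the standard proof one finds in the literature on orders. A few small remarks. In (i), the extension-across-codimension-$2$ argument is fine; it may be cleaner to observe directly that $A^{\vee\vee}$ embeds in the constant sheaf $K$ (dualizing commutes with localization at the generic point), so membership in $S$ is automatic once the algebra structure is in place. In (ii), the trace-dual bound $A_0^\ast$ is exactly the right mechanism; note that you are implicitly using normality of $X$ (which holds since $X$ is smooth) to conclude that reduced traces of integral elements lie in $\mathcal{O}_X$, so that any order $B\supseteq A_0$ satisfies $\mathrm{tr}(B)\subseteq\mathcal{O}_X$ and hence $B\subseteq A_0^\ast$. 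In (iii), your matrix-unit sketch can be made precise: if $b\in B\setminus \mathrm{Mat}_n(R)$ has entry $b_{ij}\notin R$, then $e_{1i}\,b\,e_{j1}=b_{ij}e_{11}\in B$; since $(b_{ij}e_{11})^k=b_{ij}^k e_{11}$ and $B$ is module-finite over $R$, the element $b_{ij}$ is integral over $R$, contradicting normality of the regular local ring $R$. The reduction to the \'etale-local case is legitimate because an inclusion of coherent subsheaves of $K$ being an equality can be tested after a faithfully flat base change.
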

Now let $X$ be a smooth projective surface over $k$, and $A$ be a maximal order on $X$. By lemma \ref{maximal order}, $A$ is locally free as an $\mathcal{O}_{X}$-module. Then there exists an open dense subset $U\subset X$ such that $A|_{U}$ is Azumaya on $U$.

\begin{definition}
    The \textit{ramification locus} of  $A$ is the closed locus of points where $A$ is not Azumaya.
\end{definition}

For simplicity, from now on, we assume that the ramification locus $R:=X-U$ of the maximal order $A$ is smooth. 

\begin{definition}[{\cite[Definition 4]{CK03}}]
    
Let $A$ be a maximal  order over $X$. The \textit{canonical sheaf} of $A$ is the $A$-bimodule $$\omega_{A}:= \mathcal{H}\kern -.5pt om_{\mathcal{O}_{X}}(A, \omega_{X}).$$ 
\end{definition}

\begin{lemma} [{\cite[Lemma 1.58]{reede2013moduli}}] Let $M$ and $N$ be two coherent left $A$-modules, then there is the following form of Serre duality: 
$$ \operatorname{Ext}_{A}^{i}(M, N)\cong \operatorname{Ext}_{A}^{2-i}(N, \omega_{A}\otimes_{A}M)^{*}.
$$
\end{lemma}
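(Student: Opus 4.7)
I would deduce this from classical Serre duality on the smooth projective surface $X$, mediated by the two adjunctions relating left $A$-modules and $\mathcal{O}_X$-modules. Since $A$ is locally free over $\mathcal{O}_X$ (Lemma \ref{maximal order}), the forgetful functor from $A$-modules to $\mathcal{O}_X$-modules admits an exact left adjoint $A\otimes_{\mathcal{O}_X}(-)$ and an exact right adjoint $\mathcal{H}om_{\mathcal{O}_X}(A,-)$, both of which pass to the derived category without modification.

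\textbf{Main step (free case).} First I would verify the formula for $M=A\otimes_{\mathcal{O}_X}\mathcal{F}$ with $\mathcal{F}$ a locally free $\mathcal{O}_X$-module of finite rank. The left-adjoint adjunction yields
\[
\operatorname{Ext}^{i}_{A}(A\otimes_{\mathcal{O}_X}\mathcal{F},\,N)\;\cong\;\operatorname{Ext}^{i}_{\mathcal{O}_X}(\mathcal{F},\,N),
\]
classical Serre duality on the surface $X$ turns this into $\operatorname{Ext}^{2-i}_{\mathcal{O}_X}(N,\mathcal{F}\otimes\omega_X)^{*}$, and the right-adjoint adjunction converts that back to
\[
\operatorname{Ext}^{2-i}_{A}\bigl(N,\mathcal{H}om_{\mathcal{O}_X}(A,\mathcal{F}\otimes\omega_X)\bigr).
\]
Using local freeness of $\mathcal{F}$ one checks that $\mathcal{H}om_{\mathcal{O}_X}(A,\mathcal{F}\otimes\omega_X)\cong \omega_A\otimes_{\mathcal{O}_X}\mathcal{F}\cong \omega_A\otimes_A(A\otimes_{\mathcal{O}_X}\mathcal{F})=\omega_A\otimes_A M$, which establishes the duality in this case.

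\textbf{General case.} For an arbitrary coherent left $A$-module $M$ I would choose a finite resolution $P_{\bullet}\to M$ whose terms are of the form $P_j=A\otimes_{\mathcal{O}_X}\mathcal{F}_j$ with $\mathcal{F}_j$ locally free and coherent on $X$; such a resolution exists by iteratively covering kernels via coherent $\mathcal{O}_X$-surjections pulled up through $A\otimes_{\mathcal{O}_X}(-)$, and it terminates after finitely many steps because $X$ is smooth and $A$ is locally free. Both $\operatorname{Ext}^{i}_{A}(-,N)$ and $\operatorname{Ext}^{2-i}_{A}(N,\omega_A\otimes^{\mathbf L}_A(-))^{*}$ are universal $\delta$-functors in $M$, and the identifications in the previous step are natural, so a standard spectral-sequence convergence (equivalently, passing to $\mathbf{R}\mathcal{H}om_A(M,N)$ in the derived category and computing via $P_{\bullet}$) patches them into the full statement.

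\textbf{Main obstacle.} The delicate point is controlling $\omega_A\otimes_A(-)$: since $\omega_A$ is only reflexive, and not manifestly flat as a right $A$-module, one must be careful with $\omega_A\otimes^{\mathbf L}_A M$ versus the underived tensor product. This is precisely why the argument runs through resolutions by modules $A\otimes_{\mathcal{O}_X}\mathcal{F}_j$ — on such modules the tensor product is automatically underived and matches the adjunction output — rather than through arbitrary $A$-projective resolutions, and it is also the step that uses $\dim X = 2$ most directly, via the degree-$2$ shift supplied by Serre duality on $X$.
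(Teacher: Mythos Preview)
The paper does not supply a proof of this lemma; it merely cites Reede's thesis. Your proposal is the standard argument and is correct in outline: reduce to classical Serre duality on $X$ via the two adjunctions for the forgetful functor, verify the identification $\mathcal{H}om_{\mathcal{O}_X}(A,\mathcal{F}\otimes\omega_X)\cong\omega_A\otimes_A(A\otimes_{\mathcal{O}_X}\mathcal{F})$ on induced modules, and then pass to general $M$ by resolving.

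One point deserves sharpening. Your justification that the resolution terminates ``because $X$ is smooth and $A$ is locally free'' is not sufficient on its own: a locally free $\mathcal{O}_X$-algebra can have infinite global dimension (e.g.\ $\mathcal{O}_X[\epsilon]/(\epsilon^2)$). What actually makes the resolution finite is that a \emph{maximal} order on a smooth surface has global dimension~$2$ (maximal orders over regular local rings are regular in the sense of Auslander--Goldman), so the second syzygy is already locally $A$-projective. Alternatively, you do not need finiteness at all: an infinite resolution by modules $A\otimes_{\mathcal{O}_X}\mathcal{F}_j$ still computes both sides, and the relevant spectral sequences converge because the Ext groups vanish outside degrees $0,1,2$.

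Your caution about $\omega_A\otimes_A^{\mathbf L}M$ versus $\omega_A\otimes_A M$ is prudent but in fact unnecessary here: for a maximal order on a smooth surface, $\omega_A$ is an invertible $A$-bimodule, hence flat on both sides, so the derived and underived tensor products agree. This is why the lemma can be stated with the ordinary tensor product. Your workaround via induced resolutions is a valid way to sidestep the issue, but it is worth knowing that the obstacle is not actually present in the maximal-order setting.
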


We also need to know the relationship between a quaternion maximal order and the associated even part of Clifford algebra. See \cite{kuznetsov2008derived} or \cite{chan2011moduli} for the definition and basic properties of Clifford algebras and their even parts.

Let $A$ be a quaternion maximal order on a surface $X$ ramified along a smooth curve $R$ of genus at least 1. We know by a sequence in \'{e}tale cohomology that such a maximal order exists if the genus of $R$ is at least 1. By \cite[Theorem 4.8]{chan2012conic},  there is a quadratic form $Q$ on $X$ such that $A\simeq Cl_{0}(Q)$, where $Cl_{0}(Q)$ is the even part of Clifford algebra associated with the quadratic form $Q$. Associated with the quadratic form $Q$ is a  conic bundle  $X_{Q}$ over $X$ ramified along $R$. 

On the other hand, there is a one-to-one correspondence between a maximal order $A$ on $X$ ramified along $R$ and a standard conic bundle $\operatorname{SB}(A)$ over $X$ ramified along $R$ (See \cite{artin1972some} and \cite[Theorem 5.3]{sarkisov1983conic}).  So, for a maximal order $A$, there are two conic bundles, $X_{Q}$ and $\operatorname{SB}(A)$, associated with it. In fact, these two conic bundles are the same:
\begin{lemma} [{\cite[Theorem 5.3]{chan2012conic}}] \label{conic bunldes}  
Let $X_{Q}$ and $\operatorname{SB}(A)$ be the conic bundles described  above. Suppose that the quadratic form $Q$ is given by the map $Q: V\otimes V\to \mathcal{L}$, where $V$ is a vector bundle of rank 3 on $X$  and $\mathcal{L}$ is a line bundle on $X$. Then $X_{Q}=\operatorname{SB}(A)\subseteq \mathbb{P}_{X}(V^{*})$.
\end{lemma}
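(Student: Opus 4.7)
The plan is to show that $X_Q$ and $\operatorname{SB}(A)$, both of which are natural closed subschemes of $\mathbb{P}_X(V^*)$, are equal as subschemes. The guiding principle is the classical field-level correspondence (due to Witt): for a non-degenerate ternary quadratic form $q$ over a field $\kappa$, the smooth conic $\{q = 0\} \subseteq \mathbb{P}^2_\kappa$ is canonically the Severi-Brauer variety of the quaternion algebra $Cl_0(q)$, and the two embeddings into $\mathbb{P}^2_\kappa$ (the one cut out by $q$ and the one coming from a rank-$3$ self-dual representation of $Cl_0(q)$) coincide.

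First I would verify the identification at the generic point $\eta \in X$. The generic fiber of $X_Q$ is the smooth plane conic in $\mathbb{P}(V^*_\eta)$ cut out by $Q_\eta$, while the generic fiber of $\operatorname{SB}(A)$ is the Severi-Brauer variety of the quaternion algebra $A_\eta = Cl_0(Q_\eta)$. Witt's theorem immediately gives that these two conics are equal as closed subschemes of $\mathbb{P}(V^*_\eta) = \mathbb{P}^2_{k(X)}$, once one checks that the ambient projective planes have been identified compatibly; this in turn reduces to identifying the Clifford filtration on $A_\eta$ (the ``linear part'' of a rank-one left ideal) with the inclusion $V_\eta \hookrightarrow V_\eta \oplus \mathcal{L}_\eta$ coming from $Q_\eta$.

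Next I would extend this equality across all of $X$. Both $X_Q$ and $\operatorname{SB}(A)$ are flat families over $X$ of plane conics sitting inside $\mathbb{P}_X(V^*)$, and any two flat closed subschemes of $\mathbb{P}_X(V^*)$ whose generic fibers coincide must be equal: the scheme-theoretic closure of the generic fiber in $\mathbb{P}_X(V^*)$ is unique. Hence $X_Q = \operatorname{SB}(A)$ as closed subschemes of $\mathbb{P}_X(V^*)$.

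The main obstacle will be matching the two embeddings of the respective conic bundles into $\mathbb{P}_X(V^*)$ compatibly --- not merely as abstract $X$-schemes, but as subschemes cut out by the same vector bundle $V$. The embedding of $X_Q$ is tautological, but the embedding of $\operatorname{SB}(A)$ must be shown to be induced by the relative anticanonical system, which in turn must be identified with the rank-$3$ bundle $V$ via a canonical module-theoretic construction from $A$. This matching is delicate along the ramification curve $R$, where $A$ fails to be Azumaya and the fibres of $\operatorname{SB}(A)$ degenerate to pairs of intersecting lines; verifying compatibility here requires the explicit local form of a maximal order along a smooth ramification curve, and is the technical heart of the argument in \cite{chan2012conic}.
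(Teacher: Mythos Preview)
The paper does not supply a proof of this lemma at all: it is stated with the citation \cite[Theorem 5.3]{chan2012conic} and immediately used, so there is nothing in the paper to compare your argument against.

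As a sketch of the cited result your outline is reasonable, but be aware that it front-loads the real difficulty into an assumption. You begin by treating $\operatorname{SB}(A)$ as already sitting inside $\mathbb{P}_X(V^*)$ and then argue that two flat closed subschemes agreeing generically must coincide. But for a maximal order that is \emph{not} Azumaya along $R$, the very construction of $\operatorname{SB}(A)$ as a flat conic bundle, together with a canonical closed embedding into $\mathbb{P}_X(V^*)$, is the substantive content of the theorem in \cite{chan2012conic}; it does not come for free from the Azumaya case. Your final paragraph acknowledges this, and that acknowledgement is accurate: the matching of the two embeddings along the ramification locus is exactly where the work lies, and your flatness/closure argument only kicks in once that embedding has been established.
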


We will use $X_{A}$ to denote the standard conic bundle $X_{Q}=\operatorname{SB}(A)$ and $f: X_{A} \to X $ to denote the fibration in the future.
\subsection{Construction of maximal orders over $\mathbb{P}^{2}$}In this subsection, we will review the construction of maximal quaternion orders on $\mathbb{P}^{2}$ ramified along a smooth quartic.

Let $R$ be a smooth quartic  on ${\mathbb P}_{k}^{2}$ and $\pi: Y\to \mathbb{P}^{2}$ be the double cover ramified along $R$. Recall that in this case $Y$ is a smooth projective surface. Let $\sigma$ be the covering involution. We have the following commutative diagram: 

\begin{center}
\begin{tikzcd}
Y \arrow[rd, "\pi"] \arrow[rr, "\sigma"] &   & Y \arrow[ld, "\pi"] \\
                                  & \mathbb{P}^{2} &                  
.\end{tikzcd}
    
\end{center}

 It is known that $Y$ can  be  realized as a blow up of $\mathbb{P}^{2}$ at 7 points $p_{1}, p_{2},..., p_{7}$ in general position \cite{chan2011moduli}. Let $\phi: Y\to \mathbb{P}^{2}$ be the associated blow-up morphism. We have the following diagram:

\begin{center}
\begin{tikzcd}
Y \arrow[d, "\pi"] \arrow[r, "\phi"] & \mathbb{P}^{2} \\
\mathbb{P}^{2}                               &  
.\end{tikzcd}
    
\end{center}

 The smooth surface $Y$ contains  56 exceptional curves (with self intersection (-1)), these can be written in families as follows:
\begin{itemize}
    
\item the exceptional curves $E_{i}$ corresponding to $p_{i}$ for $i=1, \cdots,7$;

\item the strict transforms $L_{ij}$ of the lines containing two points $p_{i}$ and $p_{j}$ for $1\leq i < j \leq 7$;

\item the strict transforms $C_{ij}$ of the conics containing all points except $p_{i}$ and $p_{j}$ for $1\leq i < j \leq 7$;

\item the strict transforms $D_{i}$ of the cubics passing to all points with a double point at $p_{i}$ for $i=1, \cdots,7$ .
\end{itemize}

The 56 exceptional curves above can also be described in the following way. It is known that the quartic $R$ has 28 bitangents $l_{i}$. The preimage $H_{i}=\pi^{-1}(l_{i})$ decomposes into two (-1)-curves $(C_{i}, \sigma(C_{i})).$ The 56 (-1)-curves come in 28 pairs $(C_{i}, \sigma(C_{i}))$. 

Now we describe the action of $\sigma$ on $\operatorname{Pic}(Y)$. We first recall the following lemma: 


\begin{lemma}{\cite[VII.4]{dolgachev1988point}}\label{des blow}Let $E_{i}, D_{i}, L_{ij}, C_{ij}$ be the exceptional curves described  above and let $L:=\phi^{-1}(l)$ where $l$ is a line on $\mathbb{P}^{2}$. Then we have $$\sigma(E_{i})=D_{i}, \sigma(L_{ij})=C_{ij} \  and \ \sigma(L)=L-3\sum_{i = 1}^{7}E_{i} .$$   
\end{lemma}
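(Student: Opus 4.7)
The plan is to exploit two facts: the $\sigma$-invariance of the anticanonical class $-K_Y$ (which comes from $\pi$ being the anticanonical morphism of the del Pezzo surface $Y$), and the classical decomposition of each $(-1)$-curve $C$ together with its conjugate $\sigma(C)$ as the preimage of a bitangent of $R$. Combining these gives a universal identity $\sigma(C) = -K_Y - C$ for every $(-1)$-curve $C$, which immediately yields the first two assertions, and then $\sigma$-invariance of $-K_Y$ delivers the formula for $\sigma(L)$.

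First I would observe that, since $Y$ is a del Pezzo surface of degree $2$, the anticanonical linear system $|-K_Y|$ realizes $\pi$ as the anticanonical morphism, so $\pi^{*}H = -K_Y$. In the blow-up model this reads $-K_Y = 3L - \sum_{i=1}^{7} E_i$. As $\pi \circ \sigma = \pi$, pulling back forces $\sigma^{*}(-K_Y) = -K_Y$ in $\operatorname{Pic}(Y)$.

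Next, for any $(-1)$-curve $C \subset Y$, adjunction yields $(-K_Y)\cdot C = 1$, so $\pi|_{C}$ is birational onto its image, which must be a line $l \subset \mathbb{P}^{2}$. Since $C$ is not contained in the branch locus, the scheme-theoretic preimage decomposes as $\pi^{-1}(l) = C + \sigma(C)$, whence $C + \sigma(C) = \pi^{*}H = -K_Y$ as divisor classes, i.e.\ $\sigma(C) = -K_Y - C$. Specializing to $C = E_i$ gives $\sigma(E_i) = 3L - 2E_i - \sum_{j \neq i} E_j$, which matches the class $[D_i]$, and specializing to $C = L_{ij} = L - E_i - E_j$ gives $\sigma(L_{ij}) = 2L - \sum_{k \neq i,j} E_k = [C_{ij}]$. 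For $\sigma(L)$ one applies $\sigma^{*}$ to $-K_Y = 3L - \sum_j E_j$, uses $\sigma(E_j) = D_j$ already computed, and solves the resulting linear equation for $\sigma(L)$ in the basis $\{L, E_1, \ldots, E_7\}$ of $\operatorname{Pic}(Y)$.

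The main obstacle is the geometric input in the middle step: one must check that $\pi|_{C}$ is genuinely a birational cover of a line (rather than a multi-to-one cover or a cover of a curve of higher degree), and that the preimage $\pi^{-1}(l)$ is the reduced sum $C + \sigma(C)$. Both points rely on $C \not\subset R$ together with the fact that the seven blown-up points are in general position; once these pointwise branching considerations are settled, everything else is routine bookkeeping in the Picard lattice.
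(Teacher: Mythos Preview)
The paper does not supply its own proof of this lemma; it is simply quoted from Dolgachev \cite[VII.4]{dolgachev1988point}. So there is no argument in the paper to compare against.

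Your approach is correct and is essentially the standard proof for the action of the Geiser involution on $\operatorname{Pic}(Y)$. The key identity $\sigma(C)=-K_{Y}-C$ for every $(-1)$-curve $C$ is exactly the content of the paragraph in the paper describing how the 56 exceptional curves come in 28 pairs over the bitangents of $R$. Two small remarks. First, you verify $\sigma(E_i)=[D_i]$ and $\sigma(L_{ij})=[C_{ij}]$ only as divisor \emph{classes}; to promote this to equality of curves you should note that a $(-1)$-curve is the unique effective divisor in its linear equivalence class, so equality in $\operatorname{Pic}(Y)$ already forces equality of divisors. Second, when you actually solve the linear equation for $\sigma(L)$ using $\sigma^{*}(-K_Y)=-K_Y$ and $\sigma(E_j)=D_j=3L-2E_j-\sum_{k\neq j}E_k$, you obtain
\[
3\,\sigma(L)=3L-\sum_j E_j+\sum_j D_j=3L-\sum_j E_j + 21L-8\sum_j E_j=24L-9\sum_j E_j,
\]
hence $\sigma(L)=8L-3\sum_{i=1}^{7}E_i$. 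This is the correct formula (one can check that $(8L-3\sum E_i)^2=64-63=1=L^2$), and the coefficient ``$L$'' printed in the displayed statement is a typo for ``$8L$''; the subsequent Proposition in the paper is consistent with the corrected formula.
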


\begin{proposition}\label{Proposition of Chan}
{\cite[$\mathrm{Section}$ 6]{chan2005noncommutative}} \label{Daniel Chain}
Let $G=\left<\sigma\right>$ be the Galois group of $\pi$, then we have 
\begin{enumerate}
    
\item The kernel of $( 1 + \sigma)$, $\operatorname{ker}(1+\sigma)$, is generated by $h:=L-3E_{1}$, $e_{i}:=E_{i}-E_{i+1}, i=1,2, \cdots,6$. The image $\operatorname{im}(1-\sigma)$ is generated by $2\operatorname{ker}(1+\sigma)$ and $h+e_{2}+e_{4}+e_{6}$.

\item $H^{1}(G, \operatorname{Pic}(Y))\cong (\mathbb{Z}/2\mathbb{Z})^{6}$ and this group is generated by $e_{i}, i=1,2, \cdots,6$ (as images of $\sigma$)

\item If $E$ and $E'$ are exceptional curves on $Y$, then $[E-E']\in H^{1}(G, \operatorname{Pic}(Y))$.
  \end{enumerate}  

\end{proposition}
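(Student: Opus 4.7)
The plan is to reduce everything to explicit linear algebra on the free basis $\{L, E_1, \ldots, E_7\}$ of $\operatorname{Pic}(Y)$. Lemma \ref{des blow} gives the action of $\sigma$ on this basis (using that $D_i$, as the strict transform of a cubic with a node at $p_i$ passing through the other six points, has class $3L - E_i - \sum_j E_j$), so both $1+\sigma$ and $1-\sigma$ become concrete integer matrices and each part of the proposition becomes a finite computation.

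For (i), the first step is to verify that the proposed elements lie in $\ker(1+\sigma)$. The crucial identity is $E_i + \sigma(E_i) = 3L - \sum_j E_j$, independent of $i$, which immediately gives $(1+\sigma)(E_i - E_{i+1}) = 0$; a direct expansion yields $(1+\sigma)(L-3E_1) = 0$. Rank considerations (the image of $1+\sigma$ is the rank-one lattice generated by $3L - \sum_j E_j$) force $\operatorname{rk}\ker(1+\sigma) = 7$, matching the seven exhibited elements, and a short saturation argument then identifies them as a $\mathbb{Z}$-basis. For $\operatorname{im}(1-\sigma)$, the inclusion $2\ker(1+\sigma)\subset\operatorname{im}(1-\sigma)$ is automatic from $(1-\sigma)\alpha = 2\alpha$ on the kernel. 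What remains is to compute $(1-\sigma)L$ and $(1-\sigma)E_i$, express each in the basis $\{h, e_1, \ldots, e_6\}$, and reduce mod $2\ker(1+\sigma)$; the residues all collapse to the single class $h + e_2 + e_4 + e_6$, producing exactly one extra generator.

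Part (ii) is then formal. The standard identification for $G = \mathbb{Z}/2\mathbb{Z}$ gives $H^1(G,\operatorname{Pic}(Y)) = \ker(1+\sigma)/\operatorname{im}(1-\sigma)$, and substituting (i) yields
$$\frac{\mathbb{Z}\langle h, e_1,\ldots, e_6\rangle}{\langle 2h,\, 2e_1,\ldots, 2e_6,\, h + e_2 + e_4 + e_6\rangle}.$$
The last relation solves for $\bar h$ in terms of $\bar e_2, \bar e_4, \bar e_6$, leaving $(\mathbb{Z}/2\mathbb{Z})^6$ freely generated by $\bar e_1, \ldots, \bar e_6$. For (iii), the geometric input is that the 56 exceptional curves pair up via $\sigma$, with each pair $(E, \sigma(E))$ forming the full preimage $\pi^{-1}(l_E)$ of a bitangent $l_E$ of $R$. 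Hence $E + \sigma(E) = \pi^{*}l_E$ in $\operatorname{Pic}(Y)$, and since all lines on $\mathbb{P}^{2}$ are linearly equivalent this sum is independent of $E$. Therefore $(1+\sigma)(E-E') = 0$, so $E-E'$ is a cocycle and its class lies in $H^1(G,\operatorname{Pic}(Y))$.

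The main technical obstacle is the bookkeeping in part (i): one must verify that $(1-\sigma)$ applied to each of the eight basis elements of $\operatorname{Pic}(Y)$ produces the same nonzero residue $h + e_2 + e_4 + e_6$ modulo $2\ker(1+\sigma)$, so that precisely one extra generator appears. Once that class is pinned down, (ii) is a one-line quotient and (iii) is essentially a restatement of the bitangent structure of the quartic.
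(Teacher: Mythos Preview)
The paper does not supply its own proof of this proposition; it is quoted verbatim from \cite[Section~6]{chan2005noncommutative}. So there is nothing in the present paper to compare your argument against, and your task is really to reconstruct Chan's computation. On that score your proposal is correct and essentially complete.

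A few minor remarks. In part~(i) you invoke a ``short saturation argument'' to promote the seven kernel elements $h,e_1,\ldots,e_6$ from a rational basis to a $\mathbb{Z}$-basis. This step is easy but worth making explicit: an element $aL+\sum b_iE_i$ lies in $\ker(1+\sigma)$ iff $3a+\sum b_i=0$, and one checks directly (solving backwards from the $E_7$-coefficient) that every such element is an \emph{integer} combination of $h,e_1,\ldots,e_6$. For the image of $1-\sigma$, you can shortcut the eight separate computations by observing that $(1-\sigma)L-3(1-\sigma)E_1=(1-\sigma)h=2h$ and $(1-\sigma)E_i-(1-\sigma)E_j=2(E_i-E_j)$, so all eight images are congruent modulo $2\ker(1+\sigma)$; then a single calculation of $(1-\sigma)E_1$ in the basis $\{h,e_i\}$ pins down the common residue $h+e_2+e_4+e_6$. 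Finally, in part~(iii) your bitangent argument is fine, but note that the same conclusion follows instantly from the Geiser-involution formula $\sigma(D)=-D+(D\cdot(-K_Y))(-K_Y)$ together with $E\cdot(-K_Y)=1$ for any $(-1)$-curve, which gives $(1+\sigma)E=-K_Y$ independently of $E$.
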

Next we show that any element of $H^{1}(G, \operatorname{Pic}(Y))$ is represented by the difference of two exceptional curves on $Y$.

\begin{lemma} \label{tricky lemma}
    Any element $[D]\in H^{1}(G, \operatorname{Pic}(Y))$ is represented by $[E-E']$ for some exceptional curves $E$ and $E'$.
\end{lemma}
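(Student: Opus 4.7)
The plan is to explicitly represent each class of $H^1(G, \operatorname{Pic}(Y)) \cong (\mathbb{Z}/2\mathbb{Z})^6$ as a difference of two exceptional curves, bootstrapping from the generators supplied by Proposition~\ref{Daniel Chain}.

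First I would verify that $[E - E']$ always lies in $H^1$ whenever $E, E'$ are exceptional. Using the descriptions from Lemma~\ref{des blow} --- namely $\sigma(E_i) = D_i = 3L - 2E_i - \sum_{j \ne i} E_j$ and $\sigma(L_{ij}) = C_{ij} = 2L - \sum_{k \ne i,j} E_k$ --- one checks in each of the four families ($E_i$, $L_{ij}$, $C_{ij}$, $D_i$) that $E + \sigma(E) = 3L - \sum_{k=1}^{7} E_k$, a class that is independent of the choice of exceptional curve $E$. Consequently $(1+\sigma)(E - E') = 0$, so $[E - E'] \in H^1$. By Proposition~\ref{Daniel Chain}(ii), every element of $H^1$ is a subset sum $\sum_{i \in S} e_i$ with $S \subseteq \{1, \ldots, 6\}$. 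For intervals $S = \{i, i+1, \ldots, j-1\}$, take the representative $[E_i - E_j] = \sum_{k=i}^{j-1} e_k$; together with $S = \emptyset$ (represented trivially by $[E_1 - E_1]$) this handles $22$ of the $64$ classes.

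For the remaining $42$ non-interval subsets I would bring in the other families, starting with $L_{ij}$. Writing $L_{ij} - E_k = L - E_i - E_j - E_k$ in the basis $\{h, e_1, \ldots, e_6\}$ for $\ker(1+\sigma)$ provided by Proposition~\ref{Daniel Chain}(i) (using $L = h + 3E_1$ and $E_m = E_1 - (e_1 + \cdots + e_{m-1})$), and then reducing modulo the relation $h \equiv e_2 + e_4 + e_6$ coming from $\operatorname{im}(1-\sigma)$, each class $[L_{ij} - E_k]$ unpacks into an explicit non-interval subset of $\{e_1, \ldots, e_6\}$; for instance one computes $[L_{12} - E_7] = e_3 + e_5$. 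The main obstacle is the exhaustiveness check: one must confirm that, as $(i,j,k)$ varies (and, if needed, by also taking differences of the form $[L_{ij} - L_{k\ell}]$), all $42$ remaining subsets are hit. I expect this to reduce to a finite combinatorial verification, which can be carried out systematically by exploiting the natural $S_7$-action permuting $\{E_1, \ldots, E_7\}$ (so only a handful of orbit representatives need to be treated). A more conceptual alternative is to invoke the classical bijection between the $28$ pairs $(E, \sigma(E))$ on $Y$ and the $28$ bitangents --- equivalently, odd theta characteristics --- of the smooth quartic $R$, combined with the standard fact that differences of odd theta characteristics surject onto $J(R)[2] \cong H^1(G, \operatorname{Pic}(Y))$.
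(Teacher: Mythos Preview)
Your proposal is correct and takes essentially the same route as the paper: a direct combinatorial verification that every subset-sum $\sum_{i\in S} e_i$ in $H^1(G,\operatorname{Pic}(Y))$ arises as some $[E-E']$. The paper's own proof is in fact sketchier than yours --- it works out only the two cases $e_1+e_3=[C_{67}-E_5]$ and $e_1+e_3+e_5=[C_{67}-E_6]$ by manipulating inside $H^1$ and then asserts that ``a similar computation'' covers the remaining sums --- so your systematic split into interval versus non-interval subsets, together with the $S_7$-symmetry reduction, is at least as complete. Your conceptual alternative via odd theta characteristics of $R$ does not appear in the paper.
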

\begin{proof}
    By Proposition \ref{Daniel Chain}, any element $[D] \in H^{1}(G, \operatorname{Pic}(Y))$ can be written as $\sum e_{i}$ for some $e_i$. So we only need to show that $\sum e_{i}$ is equivalent to adifference of two exceptional curves.

    We first consider the case  $[D] = e_{1}+e_{3}$. Using Proposition \ref{Proposition of Chan}, we have
    \begin{align*}   
e_{1}+e_{3}
&=[E_{1}-E_{2}+E_{3}-E_{4}] \\ &=[3E_{1}-E_{2}-E_{3}-E_{4}] \\ &=[4E_{5}-E_{1}-E_{2}-E_{3}-E_{4}] \\
&=[6E_{5}-(E_{1}+E_{2}+E_{3}+E_{4}+E_{5})-E_{5}]\\ &=[2\phi^{-1}(l)-(E_{1}+E_{2}+E_{3}+E_{4}+E_{5})-E_{5}] \\
& =[C_{67}-E_{5}].
\end{align*}
Next we consider $e_{1}+e_{3}+e_{5}$. Again, by Proposition \ref{Proposition of Chan},  we have 
  \begin{align*}   
e_{1}+e_{3}+e_{5}
&=[E_{1}-E_{2}+E_{3}-E_{4}+E_{5}-E_{6}] \\ &=[5E_{1}-E_{2}-E_{3}-E_{4}-E_{5}-E_{6}]\\
&=[6E_{1}-(E_{1}+E_{2}+E_{3}+E_{4}+E_{5})-E_{6}] 
\\ &=[2\phi^{-1}(l)-(E_{1}+E_{2}+E_{3}+E_{4}+E_{5})-E_{6}] \\
&=[C_{67}-E_{6}].
\end{align*}
A similar computation shows that any $\sum e_{i}$ is equivalent to $[E-E']$ for some exceptional curves $E$ and $E'$.

\end{proof}

Let $L\in \operatorname{Pic}(Y)$ represent a 1-cocycle in $H^{1}(G, \operatorname{Pic}(Y))$. Then $L\otimes \sigma^{*}L\cong \mathcal{O}_{Y}$. any isomorphism 
\[\psi: L_{\sigma}^{\otimes 2}=L\otimes \sigma^{*}L \to \mathcal{O}_{Y}
\]
satisfies the overlap condition and $A:=\mathcal{O}_{Y}\oplus L_{\sigma}$ is a cyclic algebra, see \cite{chan2011moduli}. Then $\pi_{*}(A)$ is a maximal quaternion order on $\mathbb{P}^{2}$ ramified along $R$ \cite{chan2005noncommutative}.

Next we describe the  multiplication of $\pi_{*}(A)$. 
Let  $U\subseteq \mathbb{P}^{2}$ be an open subset, and $(s,t), (s', t')\in \Gamma(U, \pi_{*}(A))=\Gamma(\pi^{-1}(U), \mathcal{O}_{Y}\oplus L )$. Then
\[
(s,t)(s',t')=(ss'+\psi(t\otimes \sigma(t')), st'+\sigma(s')t  ).
\]
For simplicity, we will  use $A$ to denote the maximal order $\pi_{*}(A)$. We recall a lemma that describes the maximal orders ramified along $R$.

\begin{lemma}[{\cite[Corollary 4.4]{chan2005noncommutative}}]\label{Brauer group}There is a group monomorphism $$\Psi: H^{1}(G,\operatorname{Pic}(Y))\to \operatorname{Br}(K(Y)/K(\mathbb{P}^{2})):=\operatorname{Ker}\left(\operatorname{Br}(K(\mathbb{P}^{2}))\to \operatorname{Br}(K(Y))\right)$$
given explicitly as follows. Let $L\in \operatorname{Pic}(Y)$ represent a 1-cocycle in $H^{1}(G, \operatorname{Pic}(Y))$. Then $\Psi(L)$ is the Brauer classes of $K(\mathbb{P}^{2})\otimes_{\mathbb{P}^{2}}A$ where $A$ is the order constructed above. The image of $\Psi$ consists precisely of those Brauer classes that are ramified along $R$.
    
\end{lemma}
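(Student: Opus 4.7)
The plan is to analyze $\Psi$ via the $G$-cohomology of the divisor sequence on $Y$, together with the standard identification $H^2(G,K(Y)^*)\cong\operatorname{Br}(K(Y)/K(\mathbb{P}^2))$, and then characterize the image by local ramification analysis at codimension-one points.

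First I would verify that $\Psi$ is well-defined and multiplicative. If $L'=L\otimes M\otimes(\sigma^*M)^{-1}$ for some $M\in\operatorname{Pic}(Y)$, then tensoring with $M$ produces a Morita equivalence of orders $\mathcal{O}_Y\oplus L_\sigma$ and $\mathcal{O}_Y\oplus L'_\sigma$, so their generic stalks have the same Brauer class. Additivity follows from the standard identity that the cyclic algebra attached to $L_1\otimes L_2$ is Brauer equivalent to the tensor product of those attached to $L_1$ and $L_2$.

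For injectivity, I would apply $G$-cohomology to the two short exact sequences $1\to k^*\to K(Y)^*\to K(Y)^*/k^*\to 1$ and $1\to K(Y)^*/k^*\to\operatorname{Div}(Y)\to\operatorname{Pic}(Y)\to 0$. Since $k$ is algebraically closed of characteristic zero, $H^i(G,k^*)=0$ for all $i\geq 1$, so $H^2(G,K(Y)^*/k^*)\cong H^2(G,K(Y)^*)\cong\operatorname{Br}(K(Y)/K(\mathbb{P}^2))$. Moreover $\operatorname{Div}(Y)$ is a permutation $G$-module whose $\sigma$-orbits are either fixed primes (contributing $H^1(G,\mathbb{Z})=0$) or swapped pairs (contributing cohomologically trivial $\mathbb{Z}[G]$), so $H^1(G,\operatorname{Div}(Y))=0$. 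The connecting homomorphism then gives an injection
\[
\delta\colon H^1(G,\operatorname{Pic}(Y))\hookrightarrow H^2(G,K(Y)^*/k^*)\cong \operatorname{Br}(K(Y)/K(\mathbb{P}^2)),
\]
and the remaining task is to identify $\delta$ with $\Psi$ by comparing cochain-level formulas with the factor system defining the cyclic algebra $A_\eta$.

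For the image, since $\pi$ is \'etale outside $R$, the cyclic algebra $A$ is Azumaya on $\mathbb{P}^2\setminus R$, so $\Psi(L)$ is unramified outside $R$. Conversely, a class in $\operatorname{Br}(K(Y)/K(\mathbb{P}^2))$ cannot be ramified at any prime divisor $D\not\subset R$, since pullback to $K(Y)$ would preserve that ramification (the corresponding extension of DVRs being unramified there), contradicting the assumption that the class splits over $K(Y)$. The main technical obstacle is the cochain-level identification of the abstract connecting homomorphism $\delta$ with the explicit cyclic algebra construction from the noncommutative cyclic covering trick, as this requires carefully tracing how the trivialization $\psi: L\otimes\sigma^*L\to\mathcal{O}_Y$ produces the $2$-cocycle in $K(Y)^*$ representing the Brauer class of $A_\eta$.
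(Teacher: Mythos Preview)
The paper does not prove this lemma; it is quoted verbatim from \cite[Corollary~4.4]{chan2005noncommutative} and used as a black box. So there is no ``paper's own proof'' to compare against, and your outline is essentially a sketch of how the cited reference establishes the result: the connecting map in $G$-cohomology of the divisor sequence, combined with the identification of $H^2(G,K(Y)^*)$ with the relative Brauer group, is exactly the standard mechanism.

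Two points in your sketch need attention. First, your claim that $H^i(G,k^*)=0$ for all $i\ge 1$ is false: since $G=\mathbb{Z}/2$ acts trivially on $k^*$, one has $H^1(G,k^*)=\mu_2(k)=\{\pm 1\}$ and, by periodicity, $H^3(G,k^*)\cong\mathbb{Z}/2$ as well. What you actually need (and what does hold) is $H^2(G,k^*)=k^*/(k^*)^2=0$; this gives an injection $H^2(G,K(Y)^*)\hookrightarrow H^2(G,K(Y)^*/k^*)$ rather than an isomorphism, so you must argue separately that the image of $\delta$ lifts to $H^2(G,K(Y)^*)$. The cleanest fix is the one you already hint at: the explicit cyclic-algebra construction visibly produces an honest Brauer class, so once you identify $\delta$ with $\Psi$ on the cochain level the lifting is automatic.

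Second, your image argument is incomplete. You show that everything in the image of $\Psi$ is unramified off $R$, and that every class in $\operatorname{Br}(K(Y)/K(\mathbb{P}^2))$ is unramified off $R$, but neither of these gives the reverse inclusion $\{\text{classes ramified only along }R\}\subseteq\operatorname{im}\Psi$. For that one typically either counts (both sides have order $2^6$ here) or invokes the exactness of the Brauer sequence for the cover together with the Artin--Mumford/Faddeev description of ramification; as written, this direction is missing from your proposal.
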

 
 Lemma \ref{Brauer group} indicates that any maximal quaternion order  is Morita equivalent to a cyclic algebra $A= \mathcal{O}_{Y}\oplus \mathcal{O}_{Y}(E-E')_{\sigma}$ for some exceptional curves $E, E'$ on $Y$. In fact, $E ,E'$ can be chosen as two disjoint curves.
\begin{proposition}\label{useful theorem}
    Every maximal quaternion order ramified along $R$ is Morita equivalent to a cyclic algebra $A=\mathcal{O}_{Y}\oplus \mathcal{O}_{Y}(E-E')_{\sigma},$ where $E, E' $ are two disjoint exceptional curves. 
\end{proposition}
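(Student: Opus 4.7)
The plan is to refine Lemma \ref{tricky lemma} by producing, for each class in $H^1(G, \operatorname{Pic}(Y))$, a representative $E - E'$ where $E, E'$ are \emph{disjoint} exceptional curves. Once this is achieved, the proposition follows: any maximal quaternion order $A'$ ramified along $R$ has, by Lemma \ref{Brauer group}, Brauer class $\Psi([D])$ for some $[D] \in H^{1}(G,\operatorname{Pic}(Y))$, and the cyclic algebra $A = \mathcal{O}_Y \oplus \mathcal{O}_Y(E - E')_\sigma$ built from a disjoint representative of $[D]$ has the same Brauer class; since both $A$ and $A'$ are maximal orders with the same central simple algebra as generic stalk, they are Morita equivalent.

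By Proposition \ref{Daniel Chain}(ii), every class in $H^1(G, \operatorname{Pic}(Y)) \cong (\mathbb{Z}/2\mathbb{Z})^6$ has the form $\sum_{i \in S} [e_i]$ for some $S \subseteq \{1, \ldots, 6\}$. I would first handle the $21$ ``interval'' subsets $S = \{k, k+1, \ldots, l-1\}$ with $k < l$, for which $\sum_{i \in S} e_i = E_k - E_l$ and $E_k \cdot E_l = 0$. This already gives disjoint representatives for $21$ of the $63$ non-zero classes. For the remaining $42$ non-interval classes, the strategy is to use pairs drawn from the other families $\{L_{ij}, C_{ij}, D_i\}$ of $(-1)$-curves, and to exploit the intersection-theoretic fact $L_{ij} \cdot C_{kl} = \delta_{ik} + \delta_{il} + \delta_{jk} + \delta_{jl}$ (so $L_{ij}$ and $C_{kl}$ are disjoint iff $\{i,j\} \cap \{k,l\} = \emptyset$), together with the analogous formulas for $(E_i, L_{jk})$, $(L_{ij}, L_{kl})$, $(C_{ij}, C_{kl})$, and so on. For example, the representative $[C_{67} - E_5]$ of $e_1+e_3$ from Lemma \ref{tricky lemma} is \emph{not} disjoint (one has $C_{67} \cdot E_5 = 1$), but replacing it with $[C_{12} - L_{34}]$ gives two disjoint curves, and a direct computation using the relation $h + e_2 + e_4 + e_6 \equiv 0$ in $H^1$ (with $h = L - 3E_1$) confirms $[C_{12} - L_{34}] = [e_1 + e_3]$.

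The main obstacle is the combinatorial bookkeeping required to verify that each of the $42$ non-interval classes admits such a disjoint representative. A systematic method is to express each candidate difference $E - E'$ in terms of the basis $h, e_1, \ldots, e_6$ of $\ker(1+\sigma)$, reduce modulo $\operatorname{im}(1-\sigma)$ using mod-$2$ reduction of coefficients together with the substitution $h \equiv e_2 + e_4 + e_6$, and tabulate which classes are realized by disjoint pairs. As a sanity check, of the $56$ exceptional curves, each is disjoint from exactly $27$ others (a direct count: $6$ of the $E_j$, $15$ of the $L_{jk}$, and $6$ of the $C_{jk}$ in the case $E = E_1$, and symmetrically for other $E$), giving $56 \cdot 27 = 1512$ ordered disjoint pairs distributed among $64$ classes in $H^{1}(G,\operatorname{Pic}(Y))$; so on average each class is represented many times. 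The only danger is that the distribution is pathologically uneven, and the finite case analysis precisely rules this out, completing the proof.
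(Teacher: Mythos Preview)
Your approach is correct but takes a different route from the paper. You propose to verify, by a finite case analysis over the $63$ non-trivial classes in $H^1(G,\operatorname{Pic}(Y))$, that each admits a representative $E-E'$ with $E\cdot E'=0$. The paper bypasses the casework with a single algebraic move: if $E$ and $E'$ happen to intersect, then conjugating the cyclic algebra by the line bundle $\mathcal{O}_Y(E')$ gives the Morita-equivalent cyclic algebra
\[
\mathcal{O}_Y(E')\otimes_Y\bigl(\mathcal{O}_Y\oplus\mathcal{O}_Y(E-E')_\sigma\bigr)\otimes_Y\mathcal{O}_Y(-E')\;\cong\;\mathcal{O}_Y\oplus\mathcal{O}_Y(E-\sigma E')_\sigma,
\]
the point being that $L_\sigma\otimes_Y\mathcal{O}_Y(-E')\cong\mathcal{O}_Y(-\sigma E')\otimes_Y L_\sigma$. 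Since $E\cdot E'+E\cdot\sigma E'=E\cdot H=1$ by Lemma~\ref{HH}, exactly one of $E'$, $\sigma E'$ is disjoint from $E$, and the proof ends there. The paper's argument is short and conceptual, and it exhibits the disjoint pair explicitly as $(E,\sigma E')$ rather than hunting among the $56$ exceptional curves. Your route is more elementary in that it avoids the twisted-bimodule manipulation, and your counting heuristic ($56\cdot 27$ ordered disjoint pairs spread over $64$ classes) is a reasonable plausibility check; but it leaves a $42$-case verification that you have outlined rather than executed.
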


\begin{proof}
  By  Lemma \ref{Brauer group}, for any maximal quaternion order $A$ ramified along $R$, there exists  $[L]\in H^{1}(Y,\operatorname{Pic}(Y))$ such that  $A$ and the cyclic algebra $\mathcal{O}_{Y}\oplus L_{\sigma}$ have the same generic fiber. By \cite[Theorem 3.1.5]{Artin2003orders}, $A$ and $\mathcal{O}_{Y}\oplus L_{\sigma}$ are Morita equivalent. By Lemma \ref{tricky lemma},  $[L]=[\mathcal{O}_{Y}(E-E')]$ for some exceptional curves $E$ and $E'$.

  If $E$ and $E'$ are disjoint, then we are done. Otherwise, if $E$ and $E'$ intersect, then the cyclic algebra $\mathcal{O}_{Y}\oplus \mathcal{O}_{Y}(E-E')_{\sigma}$ is Morita equivalent to 
  $$\mathcal{O}_{Y}(E')\otimes_{Y}(\mathcal{O}_{Y}\oplus \mathcal{O}_{Y}(E-E')_{\sigma})\otimes_{Y} \mathcal{O}_{Y}(-E')\cong \mathcal{O}_{Y}\oplus \mathcal{O}_{Y}(E-\sigma E')_{\sigma}.$$
  Now $E$ and $\sigma E'$ are disjoint. This proves the proposition.
  \end{proof}

Next we review the definition and basic properties of $A$-line bundles. Let $A=\mathcal{O}_{Y}\oplus L_{\sigma}$ be the maximal order described above. 
\begin{definition}\label{line bundle}

For a left $A$-module $M$, we say $M$ is an $A$-line bundle if $M$ is locally projective as a left $A$-module and $\mathrm{dim}_{A_{\eta}}(A_{\eta}\otimes_{A} M)=1$.

\end{definition}

  Since $\mathcal{O}_{Y}\subseteq A$, each $A$-module $M$ can also be realized a sheaf on $Y$. We discuss the converse. Namely, we recall a condition for a coherent sheaf on $Y$ to be an $A$-line bundle. Let $M$ be a sheaf on $Y$. Then we have  $L_{\sigma}\otimes_{Y}M\cong L\otimes_{Y} \sigma(M) $ as  $\mathcal{O}_{Y}$-modules.

\begin{lemma}[{\cite[Proposition 2.0.2]{lerner2013line}}]\label{module on Y}
An $A$-module $M$ is an $A$-line bundle if and only if $_{Y}M$ is a locally free sheaf of rank 2 on $Y$.      
\end{lemma}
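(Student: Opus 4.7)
The plan is to check the two defining conditions of an $A$-line bundle---local projectivity over $A$, and generic $A_{\eta}$-rank equal to one---against the condition that $_{Y}M$ be locally free of rank two over $\mathcal{O}_{Y}$, exploiting the fact that $A=\mathcal{O}_{Y}\oplus L_{\sigma}$ is itself locally free of rank two as an $\mathcal{O}_{Y}$-module.

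First I would pin down the shape of $A_{\eta}$. By Lemma~\ref{Brauer group}, the Brauer class of $A$ in $\operatorname{Br}(K(\mathbb{P}^{2}))$ is nontrivial, so $A_{\eta}$ is a quaternion division algebra of $K(\mathbb{P}^{2})$-dimension four; in particular every finitely generated $A_{\eta}$-module is free. The condition $\dim_{A_{\eta}}(A_{\eta}\otimes_{A}M)=1$ is therefore equivalent to the total $K(\mathbb{P}^{2})$-dimension of $M_{\eta}$ being four, i.e., using $[K(Y):K(\mathbb{P}^{2})]=2$, to $\dim_{K(Y)}({_{Y}M}\otimes_{\mathcal{O}_{Y}}K(Y))=2$. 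The generic-rank clauses on the two sides of the claimed equivalence thus match. For the ``only if'' direction this is already enough: local projectivity over $A$ makes $M$ a local direct summand of some $A^{n}$, which is locally free of rank $2n$ as an $\mathcal{O}_{Y}$-module; the summand $M$ is therefore locally free over $\mathcal{O}_{Y}$, with rank pinned at two by the generic computation.

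The substantive direction is the converse: given that $_{Y}M$ is locally free of rank two with its $A$-action, one must upgrade this to local projectivity over $A$. My approach would be through reflexivity. Being $\mathcal{O}_{Y}$-locally free makes $_{Y}M$ reflexive; pushing forward along the finite flat map $\pi\colon Y\to\mathbb{P}^{2}$, the sheaf $\pi_{*}M$ is locally free of rank four over $\mathcal{O}_{\mathbb{P}^{2}}$, and hence reflexive there too. I would then invoke the standard fact that reflexive modules over a maximal order on a smooth surface are locally projective---a codimension-two consequence of the hereditary behavior of maximal orders at codimension-one points, cf.\ \cite[Ch.~1]{Artin2003orders}. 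This last step is the main obstacle: it is a nontrivial input from the general theory of maximal orders. An alternative, more hands-on route I would pursue if necessary is to work \'etale-locally with the presentation $A=\mathcal{O}_{Y}\oplus\mathcal{O}_{Y}\cdot e$ (where $ex=\sigma(x)e$ and $e^{2}=\psi$) and verify directly that any $\mathcal{O}_{Y}$-locally free $A$-module of rank two is \'etale-locally isomorphic to $A$ itself.
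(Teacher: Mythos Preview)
The paper does not supply its own proof of this lemma; it is quoted from \cite[Proposition~2.0.2]{lerner2013line} without argument. Your proposal is correct. The forward direction and the matching of generic ranks are routine, as you indicate. For the converse, the reflexivity route you describe is the standard one: once $_{Y}M$ is locally free of rank two, $\pi_{*}M$ is locally free of rank four over $\mathcal{O}_{\mathbb{P}^{2}}$, and then one uses that a module over a maximal order on a smooth surface which is locally free over the center is locally projective over the order. You correctly identify this as the nontrivial input; it follows from the fact that a maximal order over a two-dimensional regular local ring has global dimension two, combined with an Auslander--Buchsbaum type formula for module-finite algebras (so that $\operatorname{depth}_{R}M=\operatorname{depth}_{R}A$ forces $\operatorname{pd}_{A}M=0$). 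Your alternative \'etale-local approach via the explicit cyclic presentation of $A$ would also go through.
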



Next we recall the following interesting result:

\begin{lemma}[{\cite[Proposition 3.6]{chan2011moduli}}]\label{semistable}

 An $A$-line bundle $M$ is $\mu_{H}$-semistable as a vector bundle on $Y$.     
\end{lemma}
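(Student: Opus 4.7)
The plan is to argue by contradiction, using the $L_\sigma$ component of the $A$-action to reflect any putative destabilizer $N$ into a second rank-$1$ subsheaf $L\otimes\sigma^{*}N$ of the same $H$-slope, and then combine them into a full-rank $A$-submodule. To set up, I would fix the polarization $H := \pi^{*}\mathcal{O}_{\mathbb{P}^{2}}(1)$. Since $Y$ is the degree-$2$ del Pezzo surface and $\pi$ is its anticanonical map, $H = -K_{Y}$, so every $(-1)$-curve $E$ on $Y$ satisfies $E\cdot H = 1$; in particular $c_{1}(L)\cdot H = (E-E')\cdot H = 0$, and clearly $\sigma^{*}H = H$. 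These two identities are what force the twist $N\mapsto L\otimes\sigma^{*}N$ to preserve $H$-slope. Next, via the identification $L_{\sigma}\otimes_{\mathcal{O}_{Y}}M\cong L\otimes_{\mathcal{O}_{Y}}\sigma^{*}M$ recalled just before Lemma \ref{module on Y}, the $L_{\sigma}$-part of the action packages into an $\mathcal{O}_{Y}$-linear map $\mu: L\otimes_{\mathcal{O}_{Y}}\sigma^{*}M\to M$, and the local model $M\cong A$ (valid because $M$ is locally projective over $A$, by definition of an $A$-line bundle) shows that $\mu$ is an isomorphism, since multiplication by $L_{\sigma}$ swaps the two $\mathcal{O}_{Y}$-summands of $A$.

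Now I would assume for contradiction that $M$ is not $\mu_{H}$-semistable and pick a saturated rank-$1$ subsheaf $N\subset M$ with $\mu_{H}(N) > \mu_{H}(M)$. Set $N_{2} := \mu(L\otimes\sigma^{*}N)\subset M$; because $\mu$ is an isomorphism and saturation is preserved both by $\sigma^{*}$ and by tensoring with a line bundle, $N_{2}$ is a saturated rank-$1$ subsheaf of $M$. If $N_{2}\subseteq N$, then $N$ is stable under the $L_{\sigma}$-action, hence an $A$-submodule of $M$; but $M_{\eta}$ is one-dimensional over the central simple algebra $A_{\eta}$ and therefore simple, so every nonzero $A$-submodule of $M$ must agree with $M$ generically and thus have $\mathcal{O}_{Y}$-rank $2$, contradicting $\mathrm{rk}(N) = 1$. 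If instead $N_{2}\not\subseteq N$, then $N$ and $N_{2}$ are two distinct saturated rank-$1$ subsheaves of the rank-$2$ bundle $M$, forcing $N\cap N_{2} = 0$ and exhibiting $N+N_{2}\subset M$ as a rank-$2$ subsheaf with torsion cokernel. Computing
\[
\deg_{H}(N+N_{2}) = \deg_{H}(N) + \deg_{H}(L\otimes\sigma^{*}N) = 2\deg_{H}(N),
\]
where the second equality uses $c_{1}(L)\cdot H = 0$ and $\sigma^{*}H = H$, and comparing with $\deg_{H}(N+N_{2})\le \deg_{H}(M) = 2\mu_{H}(M)$, I would conclude $\mu_{H}(N)\le \mu_{H}(M)$, contradicting the choice of $N$.

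The main obstacle in this outline is verifying that $\mu: L\otimes\sigma^{*}M\to M$ really is an isomorphism; this relies on the local projectivity built into the definition of an $A$-line bundle together with the structure of $A$ as the cyclic algebra $\mathcal{O}_{Y}\oplus L_{\sigma}$. Once $\mu$ is in hand, the rest is bookkeeping, reducing via the del Pezzo description of $Y$ and the $\sigma$-invariance of classes pulled back from $\mathbb{P}^{2}$ to the two numerical identities $c_{1}(L)\cdot H = 0$ and $\sigma^{*}H = H$.
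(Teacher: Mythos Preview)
Your argument is correct. The paper does not actually prove this lemma; it simply cites \cite[Proposition~3.6]{chan2011moduli}, so there is no in-paper proof to compare against. Your approach---using the $L_{\sigma}$-action to produce a second saturated rank-one subsheaf $N_{2}=\mu(L\otimes\sigma^{*}N)$ of the same $H$-slope, and then deriving a contradiction either from $N$ being an $A$-submodule (if $N_{2}\subseteq N$) or from the degree inequality $\deg_{H}(N\oplus N_{2})\le \deg_{H}(M)$ (if $N_{2}\not\subseteq N$)---is the standard one for this kind of statement and is essentially what appears in the cited reference. The two numerical inputs you isolate, $c_{1}(L)\cdot H=0$ and $\sigma^{*}H=H$, are exactly the right ones, and your justification that $\mu$ is an isomorphism (via the relation $L_{\sigma}^{\otimes 2}\cong\mathcal{O}_{Y}$, which furnishes an inverse) is sound.
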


Since an $A$-module $M$ is also an $\mathcal{O}_{Y}$-module, it is reasonable to consider Chern classes of  $_{Y}M$ as $\mathcal{O}_{Y}$-module. It turns out there are restrictions on the first Chern class  $c_{1}(_{Y}M)$ of an $A$-line bundle $M$.

\begin{lemma} [{\cite[Proposition 5.1]{chan2011moduli}}]\label{restrction of c1}
If $M$ is an $A$-line bundle, then the first Chern class of $M$, $c_{1}(_{Y}M)$, must be of the form $L_{\sigma}\otimes_{Y} \mathcal{O}_{Y}(nH)$ for some integer $n$.
\end{lemma}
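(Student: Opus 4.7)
My plan is to extract a Chern class identity from the $A$-module structure of $M$ and then solve the resulting linear equation using the description of $\operatorname{Pic}(Y)^\sigma$.

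The decomposition $A = \mathcal{O}_Y \oplus L_\sigma$ gives, for any left $A$-module $M$, a canonical $\mathcal{O}_Y$-linear map
$$\mu : L_\sigma \otimes_{\mathcal{O}_Y} M \longrightarrow M$$
coming from the action of the summand $L_\sigma$. First I would verify that $\mu$ is an isomorphism: since $\psi : L \otimes \sigma^* L \xrightarrow{\sim} \mathcal{O}_Y$ makes $L_\sigma$ invertible as an $\mathcal{O}_Y$-bimodule, the multiplication formula $(s,t)(s',t') = (ss' + \psi(t \otimes \sigma(t')),\, st' + \sigma(s') t)$ shows that left multiplication by $L_\sigma$ on $A$ itself is bijective, and local projectivity of $M$ over $A$ then extends this to the general case. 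Next I would identify $L_\sigma \otimes_{\mathcal{O}_Y} M \cong L \otimes \sigma^* M$ as $\mathcal{O}_Y$-modules, since the right $\mathcal{O}_Y$-structure on $L_\sigma$ is the $\sigma$-twist. Combining this with $\operatorname{rk}_{\mathcal{O}_Y} M = 2$ from Lemma \ref{module on Y} and the formula $c_1(L \otimes F) = \operatorname{rk}(F)\, c_1(L) + c_1(F)$, the isomorphism $\mu$ translates into
$$c_1(M) = 2\, c_1(L) + \sigma^* c_1(M),$$
equivalently $(1 - \sigma^*) c_1(M) = 2\, c_1(L)$.

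To solve this equation, I would use that $c_1(L) \in \ker(1 + \sigma^*)$ (Proposition \ref{Proposition of Chan}, or directly from $\psi$), so $c_1(L)$ is already a particular solution and the general solution lies in $c_1(L) + \operatorname{Pic}(Y)^\sigma$. It remains to compute $\operatorname{Pic}(Y)^\sigma$: Proposition \ref{Proposition of Chan}(i) says that $\ker(1 + \sigma^*)$ has rank $7$ in the rank-$8$ lattice $\operatorname{Pic}(Y)$, so $\operatorname{Pic}(Y)^\sigma$ has rank $1$; since the anticanonical class $H = -K_Y = \pi^*(\text{hyperplane})$ is $\sigma$-invariant with $H^2 = 2$ (hence primitive in $\operatorname{Pic}(Y)$), we conclude $\operatorname{Pic}(Y)^\sigma = \mathbb{Z}\,H$, giving $c_1(M) = c_1(L) + nH$ for some $n \in \mathbb{Z}$, which is the claim. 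The main obstacle will be the first step, namely verifying that the action $\mu$ is a global isomorphism of $\mathcal{O}_Y$-modules; the rank equality is immediate, but the bijectivity must be assembled carefully from the local projectivity of $M$ over $A$ together with the bimodule invertibility of $L_\sigma$ via $\psi$.
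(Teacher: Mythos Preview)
Your argument is correct, and it is essentially the standard one (the paper does not reprove the lemma but cites \cite[Proposition~5.1]{chan2011moduli}, whose proof proceeds along the same lines): extract from the $A$-action the isomorphism $L\otimes\sigma^{*}M\cong M$, take first Chern classes to get $(1-\sigma)c_{1}(M)=2c_{1}(L)$, observe that $c_{1}(L)$ is a particular solution because $(1+\sigma)c_{1}(L)=0$, and identify $\operatorname{Pic}(Y)^{\sigma}=\mathbb{Z}H$.

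Two minor comments. First, your worry about the bijectivity of $\mu$ can be dispatched without invoking local projectivity at all: associativity of the $A$-action gives $\mu\circ(\mathrm{id}_{L_\sigma}\otimes\mu)=\psi\otimes\mathrm{id}_{M}$, which is an isomorphism since $\psi$ is; hence $\mu$ is surjective, and a surjection between locally free $\mathcal{O}_{Y}$-sheaves of the same rank (Lemma~\ref{module on Y}) is an isomorphism. Second, your computation of $\operatorname{Pic}(Y)^{\sigma}$ is fine; the primitivity of $H$ (from $H^{2}=2$) together with $\operatorname{rk}\ker(1+\sigma)=7$ from Proposition~\ref{Proposition of Chan} indeed forces $\operatorname{Pic}(Y)^{\sigma}=\mathbb{Z}H$, and this is also just the classical fact that the Geiser involution on a del Pezzo surface of degree~$2$ has invariant lattice generated by the anticanonical class.
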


Let $M$ be an $A$-module. For simplicity, we will write $c_{i}(M)$ for the $i$-th Chern class of $M$ as an $\mathcal{O}_{Y}$-module.

At the end of this section, we prove an important proposition that will be used later.

\begin{proposition}\label{left and right}
    
    The maximal order $A$ is isomorphic to its opposite algebra $A^{\circ}$.
\end{proposition}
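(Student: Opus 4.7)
The plan is to build the isomorphism from the canonical involution on the quaternion algebra $A_{\eta}$. Since $A$ is a maximal quaternion order on $\mathbb{P}^{2}$, its generic stalk $A_{\eta}$ is a central simple algebra of degree $2$ over $K(\mathbb{P}^{2})$, i.e.\ a quaternion algebra. Every quaternion algebra carries a canonical involution $\tau \colon A_{\eta} \to A_{\eta}$ defined by $\tau(a) = \operatorname{trd}(a) - a$, which is a $K(\mathbb{P}^{2})$-linear anti-automorphism squaring to the identity. Since an anti-automorphism of an algebra is the same datum as an isomorphism with the opposite algebra, $\tau$ already yields $A_{\eta} \cong A_{\eta}^{\circ}$ as $K(\mathbb{P}^{2})$-algebras; the remaining task is to descend this to the order.

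The crux is then to show that $\tau(A) = A$ as subsheaves of the constant sheaf $A_{\eta}$. I would argue locally on an affine open $U \subset \mathbb{P}^{2}$. Any section $a$ of $A$ over $U$ is integral over $\mathcal{O}_{\mathbb{P}^{2}}(U)$, because $A$ is coherent, hence a finite $\mathcal{O}_{\mathbb{P}^{2}}$-module by Definition \ref{order}. Its reduced characteristic polynomial $X^{2} - \operatorname{trd}(a)X + \operatorname{nrd}(a)$ divides any monic polynomial over $\mathcal{O}_{\mathbb{P}^{2}}(U)$ annihilating $a$, so by a Gauss-lemma argument the coefficients $\operatorname{trd}(a)$ and $\operatorname{nrd}(a)$ are themselves integral over $\mathcal{O}_{\mathbb{P}^{2}}(U)$. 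Normality of $\mathbb{P}^{2}$ then places them in $\mathcal{O}_{\mathbb{P}^{2}}(U)$, and therefore $\tau(a) = \operatorname{trd}(a) - a$ is again a local section of $A$. Applying $\tau$ once more and using $\tau^{2} = \operatorname{id}$ upgrades the inclusion $\tau(A) \subseteq A$ to an equality.

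Combining these two points gives that $\tau|_{A}$ is an $\mathcal{O}_{\mathbb{P}^{2}}$-algebra anti-automorphism of $A$, which is exactly the claimed isomorphism $A \cong A^{\circ}$. The only substantive step in this approach is the integrality argument for $\operatorname{trd}(a)$ and $\operatorname{nrd}(a)$; the explicit cyclic algebra description $A = \pi_{*}(\mathcal{O}_{Y} \oplus L_{\sigma})$ from Proposition \ref{useful theorem} and the double cover setup play no role, only the facts that $A$ is a quaternion order and $\mathbb{P}^{2}$ is normal. An alternative route, which I expect the author may prefer for consistency with the preceding sections, is to use the Clifford-algebra identification $A \simeq Cl_{0}(Q)$ from Lemma \ref{conic bunldes}: the standard involution on $Cl(Q)$ that reverses products of vectors restricts to an anti-involution on $Cl_{0}(Q)$, producing the same isomorphism $A \cong A^{\circ}$.
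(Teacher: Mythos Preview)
Your proof is correct and follows essentially the same route as the paper: both exhibit the standard involution $a \mapsto \operatorname{trd}(a) - a$ as an anti-automorphism of $A$. The paper simply cites \cite{chan2012conic} for the existence of the global trace map $\operatorname{tr}\colon A \to \mathcal{O}_{\mathbb{P}^{2}}$ and the fact that $\iota(a)=\operatorname{tr}(a)-a$ is a standard involution, whereas you supply the underlying integrality argument (reduced characteristic polynomial plus normality of $\mathbb{P}^{2}$) that makes this work; your Clifford-algebra alternative is also valid but not the route the paper takes.
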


\begin{proof}

By \cite[Definition 4.1 and 4.2]{chan2012conic}, $A$ has the global trace map $\operatorname{tr}: A\to \mathcal{O}_{\mathbb{P}^{2}}$. So according to \cite[Proposition 4.3]{chan2012conic}, the map $\iota:  A\to A: a\to \operatorname{tr}(a)-a$ is a standard involution of the first kind. Let $A^{\circ}$ denote the opposite algebra of $A$, then the map $\iota $ defines an isomorphism between $A$ and $A^{\circ}$.

\end{proof}

In the above discussion on $A$-modules, we considered only left $A$-modules. Of course, we can define right $A$-modules and $A$-line bundles in the same way. By Proposition \ref{left and right}, we know each left $A$-module (resp.~$A$-line bundle) can be realized a right $A$-module (resp.~$A$-line bundle), and vice versa. The category of left $A$-modules (resp. $A$-line bundles) is equivalent to the category of right $A$-modules (resp.~$A$-line bundles). Hence we do not need to distinguish between the two cases.

\section{Moduli functor of simple modules}\label{section 3}

Let $X$ be a smooth projective variety over $k$ and $A$ be an order on $X$. In this section, we consider the moduli functor simple $A$-modules. First we recall a definition.
\begin{definition}
Let $M$ be a sheaf on $X$. We say $M$ is a \textit {generically simple torsion free $A$-module}  if $M$ is a left $A$-module which is torsion free and coherent over $X$, and the generic fiber $M_{\eta}:=M\otimes_{X}K(X)$ is a simple module over $A_{\eta}$. Here $\eta$ is the generic point of $X$.
\end{definition}

 \begin{example}
 When $A=\mathcal{O}_{X}$, a generically simple torsion free sheaf is just a torsion free sheaf of rank 1 over $X$. 
\end{example}
\begin{proposition}\label{Simple module} 
Let $M$ and $N$ be two generically simple torsion free $A$-modules. 
\begin{enumerate}

\item Let $\phi\in \operatorname{Hom}_{A}(M,N)$. If $\phi$ is nontrivial, then $\phi$ is injective. 

\item $ \operatorname{End}_{A}(M)=k.$
\end{enumerate}
\end{proposition}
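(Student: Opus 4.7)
The plan is a two-step argument that reduces everything to a generic-fiber computation, combined with the torsion-freeness hypothesis.

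\medskip

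\textbf{Step 1 (part (i)).} Let $\phi\in\operatorname{Hom}_A(M,N)$ be nontrivial. I would pass to the generic fiber and look at $\phi_\eta\colon M_\eta\to N_\eta$, which is an $A_\eta$-module map between simple $A_\eta$-modules. By the Schur-type dichotomy for simple modules, $\phi_\eta$ is either zero or an isomorphism. If $\phi_\eta=0$, then the image $\phi(M)\subseteq N$ has trivial generic stalk, so every section of $\phi(M)$ is $\mathcal{O}_X$-torsion; but $N$ is torsion free, forcing $\phi(M)=0$ and contradicting nontriviality. Hence $\phi_\eta$ is an isomorphism, in particular injective. Now $\ker(\phi)\subseteq M$ is an $\mathcal{O}_X$-submodule of a torsion-free sheaf, hence torsion free, and its generic stalk $\ker(\phi_\eta)$ vanishes, so $\ker(\phi)=0$.

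\medskip

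\textbf{Step 2 (part (ii)).} Having (i), I would first observe that $\operatorname{End}_A(M)$ is a finite-dimensional $k$-algebra (it sits inside the finite-dimensional $k$-vector space of global $\mathcal{O}_X$-endomorphisms of the coherent sheaf $M$ on the projective variety $X$). Next I would check that $\operatorname{End}_A(M)$ has no zero divisors: if $\phi,\psi$ are nonzero endomorphisms, part (i) makes each injective, and for $m\neq 0$ one has $\phi(m)\neq 0$ and then $\psi(\phi(m))\neq 0$, so $\psi\circ\phi\neq 0$. Equivalently, one can realize the inclusion
\[
\operatorname{End}_A(M)\hookrightarrow \operatorname{End}_{A_\eta}(M_\eta),
\]
and note the right-hand side is a division algebra over $k(X)$ by Schur, so the left-hand side is a finite-dimensional $k$-subalgebra of a division ring, hence a domain. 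A finite-dimensional $k$-algebra that is a domain is automatically a division algebra (injective multiplication-by-$a$ on a finite-dimensional space is surjective), and a finite-dimensional division algebra over the algebraically closed field $k$ must equal $k$ itself (any element generates a finite field extension of $k$, which is $k$).

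\medskip

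\textbf{Expected obstacle.} There is essentially no obstacle: the whole proof is the standard Schur's lemma argument combined with the torsion-free hypothesis. The only point that requires a moment of care is making sure the passage $M\rightsquigarrow M_\eta$ is faithful enough, which is exactly where the torsion-free assumption in the definition of generically simple torsion-free $A$-module is used; without it, a nonzero $\phi$ with $\phi_\eta=0$ could exist. I would therefore emphasize that reduction step in the write-up.
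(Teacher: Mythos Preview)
Your proposal is correct and follows essentially the same approach as the paper's proof: both parts reduce to the generic fiber, use simplicity of $M_\eta$ (via Schur) together with torsion-freeness to conclude injectivity, and then observe that $\operatorname{End}_A(M)$ is a finite-dimensional $k$-algebra embedding into the division algebra $\operatorname{End}_{A_\eta}(M_\eta)$, hence equal to $k$ since $k$ is algebraically closed. The only cosmetic difference is that the paper argues directly that $\operatorname{Ker}(\phi)_\eta=0$ from simplicity of $M_\eta$ rather than phrasing it as the Schur dichotomy, but the content is identical.
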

\begin{proof}
    (i). Since $\phi\not = 0$, the image $\operatorname{Im}(\phi)\not =0$. Since $N$ is torsion free, $\operatorname{Im}(\phi)$ is torsion free. Thus the generic fiber  $\operatorname{Im}(\phi)_{\eta}\not =0$. Since $M_{\eta}$ is a simple $A_{\eta}$-module, $\operatorname{Ker}(\phi)_{\eta}=0$. So $\operatorname{Ker}(\phi)$ is a torsion sheaf. However, it is also a subsheaf of the torsion free sheaf $M$. Hence $\operatorname{Ker}(\phi)=0$ and $\phi$ is injective.

    (ii). Since $M$ is coherent as an $\mathcal{O}_{X}$-module, $\operatorname{End}_{A}(M)=\Gamma(X, \mathcal{E}nd_{A} (M))$ is a finite dimensional $k$-algebra. Since we have $\operatorname{End}_{A}(M) \hookrightarrow \operatorname{End}_{A_{\eta}}(M_{\eta}) $ and $\operatorname{End}_{A_{\eta}}(M_{\eta}) $ is a division algebra, $\operatorname{End}_{A}(M)$ does not have zero divisors. Hence $  \operatorname{End}_{A}(M)$ is a division algebra over $k$. Since $k$ is algebraically closed, $\operatorname{End}_{A}(M)=k$.
\end{proof}

For a scheme $S$ over $k$, let $p$ be the first projection $S\times X\to S$ and $q$ be the second projection $S\times X\to X$.
\begin{definition} [{\cite[Definition 1.4]{hoffmann2005moduli}}]\label{Definition of moduli func}
Fix a polynomial $P \in {\mathbb Q}[x]$. A \textit{flat family of generically simple torsion-free $A$-module} over a $k$-scheme $S$ with Hilbert polynomial $P$  is a sheaf $\mathcal{E}$ of left modules over the pullback $A_{S}$ of $A$ to $S\times X$ with the following properties:
\begin{itemize}
\item $\mathcal{E}$ is coherent over $\mathcal{O}_{S\times X}$ and flat over $S$.

\item For every $p\in S$, $\mathcal{E}_{k(p)}$ is a generically simple torsion free $A_{k(p)}$-module with Hilbert polynomial $P$,
where $k(p)$ is the residue field at $p$ and $\mathcal{E}_{k(p)}$  (resp.~$A_{k(p)}$) is the pullback of $\mathcal{E}$ (resp.  $A_{S}$) to $X\times \operatorname{Spec}k(p) $.
\end{itemize}

We denote the corresponding moduli functor by $$\mathcal{M}_{A/X,P} : (\textup{Schemes}/k)^{\ \text{op}} \to \ \text{Sets}.$$
For any scheme $S$, 
\[
\mathcal{M}_{A/X,P}(S):= \left.\left\{\begin{tabular}{l}
\text{isomorphism classes} \ $\mathcal{E}$ \ \text{of flat families of generically simple} \\ \text{torsion free}  $A$-\text{modules over $S$ with Hilbert polynomial $P$}
\end{tabular}
\right\} \right/ \sim,
\]
where $\sim$ is the equivalence relation defined as follows. Let $G$ and $G'$ be two flat families over $S$, then we say
$$ G\sim G' \ \text{if} \ G\cong G'\otimes p^{*}L \ \text{as} \ A_S\text{-modules for some} \ L\in \operatorname{Pic}(S). $$

\end{definition}
Hoffmann and Stuhler proved the following useful theorem. A similar result was proved by others, but this is the most useful statement for our purposes.

\begin{theorem} [{\cite[Theorem 2.4]{hoffmann2005moduli}}]

There exists a  coarse moduli space $\mathbf{M}_{A/X,P}$ for the functor $\mathcal{M}_{A/X,P}$. It is a projective scheme over $k$.
    \end{theorem}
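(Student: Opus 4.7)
The plan is to adapt the standard Gieseker--Maruyama construction to the noncommutative setting, producing $\mathbf{M}_{A/X,P}$ as a GIT quotient of a locally closed subscheme of a Quot scheme. First I would establish boundedness: since $A$ is a fixed coherent $\mathcal{O}_X$-module, any generically simple torsion free $A$-module $M$ is in particular a torsion free $\mathcal{O}_X$-module whose rank and Hilbert polynomial (with respect to a fixed polarization $H$) are fixed by $P$. A simple $A$-module is semistable as an $A$-module, and via the forgetful functor this can be translated into a uniform bound on the slopes of subsheaves (compare \cite[Lemma 3.2]{simpson1990moduli} and \cite[Proposition 3.6]{chan2011moduli}, which is the analogue used later in the paper); Grothendieck's boundedness criterion then yields a single $m_0$ such that for all $m \geq m_0$ and all such $M$, $M(m)$ is globally generated with vanishing higher cohomology and $h^0(M(m)) = P(m)$.

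Next I would construct the parameter space. Fix $m$ large and set $N := P(m)$. Consider the Quot scheme $\mathrm{Quot}(A(-m)^{\oplus N}, P)$ of quotient $A$-modules of $A(-m)^{\oplus N}$ with Hilbert polynomial $P$; this is a projective $k$-scheme, as constructed in the noncommutative setting (see \cite{simpson1990moduli, hoffmann2005moduli}). Inside it, the condition ``$M$ is torsion free, generically simple, and the fixed $N$ sections induce an isomorphism $k^N \xrightarrow{\sim} H^0(M(m))$'' cuts out a locally closed subscheme $Q \subseteq \mathrm{Quot}$. The group $G = \mathrm{GL}_N$ acts on $Q$ by change of basis, and two points lie in the same orbit if and only if the corresponding $A$-modules are isomorphic.

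The heart of the argument is then the GIT step. Using the Grothendieck embedding $Q \hookrightarrow \mathrm{Grass}$ of an associated Grassmannian (of quotients of $H^0(A(-m)^{\oplus N} \otimes \mathcal{O}_X(\ell))$ for $\ell \gg m$), one gets a natural $G$-linearized ample line bundle on $\overline{Q}$. One shows that every point of $Q$ is GIT-stable for this linearization: the argument is the noncommutative analogue of Simpson's, translating the Hilbert--Mumford numerical criterion into the statement that every proper nontrivial $A$-submodule of $M$ would have strictly smaller reduced Hilbert polynomial, which contradicts generic simplicity together with torsion freeness (the generic fiber is a simple $A_\eta$-module, so any proper $A$-submodule has strictly smaller generic rank). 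By Proposition~\ref{Simple module}(ii) all stabilizers are the scalars $k^\times \subset G$, hence orbits are closed with reductive stabilizers, so the GIT quotient $\mathbf{M}_{A/X,P} := Q /\!\!/ G$ exists as a projective $k$-scheme and is a coarse moduli space for $\mathcal{M}_{A/X,P}$; the equivalence relation $\sim$ in Definition~\ref{Definition of moduli func} (twisting by $p^*L$) is built in because the $\mathrm{GL}_N$-action absorbs exactly the ambiguity in choosing an identification $H^0(M(m)) \cong k^N$.

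The main obstacle I anticipate is verifying GIT stability for every point of $Q$, because this requires a clean noncommutative analogue of the slope/Hilbert polynomial comparison between $A$-submodules and their underlying $\mathcal{O}_X$-subsheaves; the boundedness step is routine once one has the semistability bound, but checking that simple generically forces the Hilbert--Mumford inequality to be strict for every one-parameter subgroup is where the argument really uses the hypothesis that the generic fiber $M_\eta$ is a simple $A_\eta$-module together with Proposition~\ref{Simple module}.
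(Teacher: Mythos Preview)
The paper does not give its own proof of this theorem; it is stated with attribution to \cite[Theorem 2.4]{hoffmann2005moduli} and used as a black box. Later (Lemma~\ref{Principle bundle} and the paragraph before it) the paper recalls that the construction in \cite{hoffmann2005moduli} uses a locally closed subscheme $R\subset\operatorname{Quot}_{P}(A(-m)^{N})$ with a $\operatorname{PGL}(N)$-action, which is exactly the parameter space you describe. So your proposal correctly reconstructs the strategy of the cited reference: boundedness, parametrisation in a Quot scheme of $A$-module quotients, and a GIT quotient by $\operatorname{GL}(N)$ using that generic simplicity forces every proper $A$-submodule to have strictly smaller generic rank.

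One small correction: your appeal to \cite[Proposition~3.6]{chan2011moduli} (Lemma~\ref{semistable} in this paper) for the boundedness step is misplaced. That result is specific to $A$-line bundles for the particular quaternion order on $\mathbb{P}^{2}$ and says they are $\mu_H$-semistable as $\mathcal{O}_Y$-bundles; it is not a general statement about generically simple torsion free $A$-modules over an arbitrary order. In Hoffmann--Stuhler's argument boundedness comes instead from the observation that any $A$-submodule of a generically simple $M$ is either zero or has full generic rank over $A_\eta$, so the family is automatically bounded in the sense needed for the Quot-scheme construction; no comparison with $\mathcal{O}_X$-semistability is required.
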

Now, we define the quasi-universal and universal families. The definitions here are modeled after the definitions of quasi-universal and universal families of moduli spaces of stable sheaves, as given in \cite[Definition 4.6.1]{huybrechts2010geometry}. We will use the same notations and adapt the proofs in {\it loc.~cit.}~to our situation. 

\begin{definition}[{\cite[Definition 1.86]{reede2013moduli}\label{universal family}}]
 A flat family $\mathcal{E}$ of generically simple torsion free $A$-modules on $X$ parameterized by $\mathbf{M}_{A/X,P}$ is called \textit {universal} if the following holds: If $\mathcal{F}$ is a family of generically simple torsion free $A$-modules over $S$ with Hilbert polynomial $P$ and if 
\[
\begin{tabular}{rcl}
$\phi_{\mathcal{F}}$ : $S$ & $\longrightarrow$ & $\mathbf{M}_{A/X, P}$ \\
$s$ & $\longrightarrow$ & $[\mathcal{F}_{s}]$ \\
\end{tabular}
\]
is the induced morphism, then there is a line bundle $L$ on $S$ such that $
\mathcal{F}\otimes p^{*}L\cong \phi^{*}_{\mathcal{F},X}\mathcal{E}
$
as $A_S$-modules, where $\phi^{*}_{\mathcal{F},X}:=(\phi_{\mathcal{F}}\times \operatorname{id}_{X})^{*}$.

 A flat family $\mathcal{E}$ of torsion free $A$-module over $\mathbf{M}_{A/X,P}$ is called \textit{quasi-universal}, if there is a locally free $\mathcal{O}_{\mathbf{M}_{A/X,P}}$-module $W$ such that $\mathcal{F}\otimes p^{*}W\cong \phi^{*}_{\mathcal{F},X}\mathcal{E}.$

\end{definition}

 Note that if $\mathcal{E}$ is a universal family, then for any line bundle $L$ on $\mathbf{M}_{A/X,P}$, $\mathcal{E}\otimes p^{*}L$  is also a universal family. 
\begin{remark}
If $\mathcal{E}$ is a quasi-universal family, then by definition, for every point $y\in \mathbf{M}_{A/X,P}$, $\mathcal{E}_y$ is isomorphic to $M^{\oplus n}$ for a generically simple torsion free  $A$-module $M$ with Hilbert polynomial $P$. Further this module $M$ corresponds to the isomorphism class of $y\in \mathbf{M}_{A/X,P}$. Here $n$ is the rank of the stalk of $W$ at $y$. If $\mathcal{E}$  is a universal family, then $n=1$.
\end{remark}
 Recall that in the construction of $\mathbf{M}_{A/X,P}$ \cite[Proposition 2.2]{hoffmann2005moduli}, the authors use a locally closed subscheme $R$ of $\operatorname{Quot}_{P}(A(-m)^{N})$ for some $m >>0, N > 0$. For this subscheme $R$, we have the following:

\begin{lemma}\label{Principle bundle}
    The quotient morphism $$R\to \mathbf{M}_{A/X, P}$$
    is a principal $\operatorname{PGL}(N)$-bundle in the $\acute{e}tale$ topology. 
\end{lemma}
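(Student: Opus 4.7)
The plan is to adapt the standard argument for moduli of stable sheaves (\cite[Section 4.3]{huybrechts2010geometry}) to our setting, using Proposition \ref{Simple module} to guarantee the rigidity needed for a free group action.

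First I would unpack the construction: $R$ is the locally closed subscheme of $\operatorname{Quot}_P(A(-m)^N)$ parametrizing quotients $q \colon A(-m)^N \twoheadrightarrow \mathcal{F}$ where $\mathcal{F}$ is a generically simple torsion-free $A$-module with Hilbert polynomial $P$ and where $q$ induces an isomorphism $k^N \xrightarrow{\sim} H^0(\mathcal{F}(m))$. The group $\operatorname{GL}(N)$ acts on $R$ by changing the framing $k^N \to H^0(\mathcal{F}(m))$, and by construction two points of $R$ lie over the same point of $\mathbf{M}_{A/X,P}$ iff they lie in the same $\operatorname{GL}(N)$-orbit. Since the diagonal $\mathbb{G}_m \subset \operatorname{GL}(N)$ only rescales the framing, the action descends to $\operatorname{PGL}(N)$, and the induced map $R/\operatorname{PGL}(N) \to \mathbf{M}_{A/X,P}$ is a bijection on points.

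Next I would show that the $\operatorname{PGL}(N)$-action on $R$ is scheme-theoretically free. For a geometric point $q \in R$ with target $\mathcal{F}$, the stabilizer in $\operatorname{GL}(N)$ is naturally identified with $\operatorname{Aut}_A(\mathcal{F})$. By Proposition \ref{Simple module}(ii) we have $\operatorname{End}_A(\mathcal{F}) = k$, so $\operatorname{Aut}_A(\mathcal{F}) = \mathbb{G}_m$; this is exactly the kernel of $\operatorname{GL}(N) \twoheadrightarrow \operatorname{PGL}(N)$, so $\operatorname{PGL}(N)$-stabilizers are trivial set-theoretically. To upgrade this to scheme-theoretic freeness, I would show that the endomorphism sheaf of the universal quotient on $R$ is a rank-$1$ locally free $\mathcal{O}_R$-algebra, which follows from flatness together with the pointwise computation and semicontinuity.

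Finally, I would conclude by invoking a standard descent principle: a free action of a smooth reductive group scheme with categorical quotient is automatically a principal bundle in the étale topology. Concretely, Luna's étale slice theorem applied at each point of $\mathbf{M}_{A/X,P}$ produces an étale-local section of $R \to \mathbf{M}_{A/X,P}$, and combining these sections with the $\operatorname{PGL}(N)$-action gives the desired étale-local trivializations. The main obstacle is the scheme-theoretic freeness of the action, which crucially uses the generic-simplicity hypothesis through Proposition \ref{Simple module}(ii); granting this, the rest of the argument follows verbatim the treatment of the sheaf case in Huybrechts--Lehn, adapted to $A$-modules.
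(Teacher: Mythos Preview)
Your argument is correct but takes a substantially more hands-on route than the paper. The paper's proof is a two-line citation: it quotes \cite[Lemma 2.4(ii)]{hoffmann2005moduli} for the fact that $R \to \mathbf{M}_{A/X,P}$ is already a principal $\operatorname{PGL}(N)$-bundle in the fppf topology, and then invokes the general principle (Grothendieck) that for a smooth group scheme an fppf-torsor is automatically an \'etale-torsor. You instead re-derive the key input---scheme-theoretic freeness of the $\operatorname{PGL}(N)$-action---from scratch via Proposition \ref{Simple module}(ii) and semicontinuity, and then conclude with Luna's \'etale slice theorem. Your approach has the virtue of being self-contained and of making transparent exactly where generic simplicity enters, but it duplicates work that Hoffmann--Stuhler already carried out; the paper's approach is shorter precisely because it outsources that work to the reference and reduces the lemma to a one-step descent along the smoothness of $\operatorname{PGL}(N)$.
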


\begin{proof}
    By \cite[Lemma 2.4 ii]{hoffmann2005moduli}, the quotient morphism is a $\operatorname{PGL}(N)$-bundle in fppf topology. We know for a smooth algebraic group $G$ over $k$, if a morphism $\pi$ is a principle $G$-bundle in fpqc or fppf topology, then it is also a principle $G$-bundle in $\operatorname{\acute{e}tale}$ topology, see \cite{grothendieck1959technique}. Since  $\operatorname{PGL}(N)$ is smooth over $k$, the quotient morphism is also a $\operatorname{PGL}(N)$-bundle in $\operatorname{\acute{e}tale}$ topology.
\end{proof}

Let $\widetilde{F}$ be  the restriction  of the universal family on $\operatorname{Quot}_{P}(A(-m)^{N})\times X$ to $R\times X$.  Then $\widetilde{F}$ is a $\operatorname{GL}(N)$-linearized sheaf on $R\times X$. Since the center $Z$ of $\operatorname{GL}(N)$ acts trivially on $R$, the fiber over any point $[\rho]\in R$ (resp.~$([\rho], x)\in R\times X$) of $\widetilde{F}$  (resp.~$R\times X$) has the structure of a $Z$-representation and decomposes into weight spaces.

We have following lemma, which is similar to \cite[proposition 4.6.2]{huybrechts2010geometry}.
\begin{lemma}[{\cite[Theorem 1.88]{reede2013moduli}}] \label{Thm:Line}
    There exist $\operatorname{GL}(N)$-linearized vector bundles on $R$ with $Z$-weight 1. If $B$ is any such vector bundle, then $\mathcal{H}\kern -.5pt om(p^{*}B, \widetilde{F})$ descends to a quasi-universal family $\mathcal{E}$. If $B$ is a line bundle, then $\mathcal{E}$ is universal. 
\end{lemma}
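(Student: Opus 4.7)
The plan is to proceed in two stages: construct an explicit $\operatorname{GL}(N)$-linearized vector bundle of $Z$-weight 1 on $R$, and then perform faithfully flat descent of $\mathcal{H}\kern -.5pt om(p^*B, \widetilde{F})$ along the principal $\operatorname{PGL}(N)$-bundle $R \to \mathbf{M}_{A/X,P}$, finally verifying the (quasi-)universal property by reducing to an étale-local check on $R$.

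For existence, I would use that $R$ is a locally closed subscheme of $\operatorname{Quot}_P(A(-m)^N)$, on which the tautological $\operatorname{GL}(N)$-linearization places the universal quotient, and hence $\widetilde{F}$, in $Z$-weight 1, since the center acts by scalars on $A(-m)^N$. For $r \gg 0$ the pushforward $p_*(\widetilde{F} \otimes q^*\mathcal{O}_X(r))$ is a locally free sheaf on $R$ that inherits a $\operatorname{GL}(N)$-linearization of $Z$-weight 1, producing one concrete $B$; any other weight-1 linearized vector bundle is handled by the same formal argument below.

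Given such a $B$, the sheaf $\mathcal{H}\kern -.5pt om(p^*B, \widetilde{F})$ has $Z$-weight $-1 + 1 = 0$, so the center acts trivially and the $\operatorname{GL}(N)$-linearization descends to a $\operatorname{PGL}(N)$-linearization. By Lemma \ref{Principle bundle}, $R \to \mathbf{M}_{A/X,P}$ is a principal $\operatorname{PGL}(N)$-bundle in the étale topology, so faithfully flat descent of quasi-coherent sheaves produces a coherent sheaf $\mathcal{E}$ on $\mathbf{M}_{A/X,P} \times X$ whose pullback to $R \times X$ recovers $\mathcal{H}\kern -.5pt om(p^*B, \widetilde{F})$. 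Flatness of $\mathcal{E}$ over $\mathbf{M}_{A/X,P}$ follows from flatness of $\widetilde{F}$ over $R$ combined with étale flatness of local sections, and the fiber over $[M]$ is $\mathcal{H}\kern -.5pt om(B_{[\rho]}, M) \cong M^{\oplus n}$ where $n = \operatorname{rk} B$.

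To verify the quasi-universal property, given a flat family $\mathcal{F}$ on $S \times X$ with classifying morphism $\phi_{\mathcal{F}}$, I would form the étale cover $S' := S \times_{\mathbf{M}_{A/X,P}} R \to S$. Over $S'$ both $\mathcal{F}|_{S' \times X}$ and $\phi^*_{\mathcal{F},X}\mathcal{E}|_{S' \times X}$ pull back from the universal construction on $R$ up to the twist $M \mapsto M^{\oplus n}$, so I would define $W := p_*\mathcal{H}\kern -.5pt om_{A_S}(\mathcal{F}, \phi^*_{\mathcal{F},X}\mathcal{E})$ and show, via cohomology and base change together with the fiberwise computation $\operatorname{Hom}_A(M, M^{\oplus n}) \cong k^n$, that $W$ is locally free of rank $n$ and induces the required isomorphism $\mathcal{F} \otimes p^*W \cong \phi^*_{\mathcal{F},X}\mathcal{E}$. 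When $B$ is a line bundle, $n = 1$ and $W$ is a line bundle, giving universality. I expect the main obstacle to be establishing that $W$ is globally locally free of the correct rank rather than only étale-locally; this relies crucially on Proposition \ref{Simple module}(ii), which gives $\operatorname{End}_A(M) = k$, so that the automorphism sheaf of families reduces to $\mathbb{G}_m$ and $\operatorname{Hom}$ between fibers has the expected dimension, ensuring the descent produces an honest locally free sheaf on $S$.
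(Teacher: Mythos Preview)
The paper does not supply its own proof of this lemma: it is quoted directly from \cite[Theorem 1.88]{reede2013moduli}, with the remark that it is similar to \cite[Proposition 4.6.2]{huybrechts2010geometry}. Your proposal is precisely the standard argument underlying those references, adapted to the $A$-module setting: produce a weight-$1$ bundle via $p_*(\widetilde{F}\otimes q^*\mathcal{O}_X(r))$ for $r\gg 0$, observe that $\mathcal{H}\kern -.5pt om(p^*B,\widetilde{F})$ has $Z$-weight $0$ and hence carries a $\operatorname{PGL}(N)$-linearization, descend along the \'etale principal bundle of Lemma~\ref{Principle bundle}, and identify the resulting family via the simplicity statement $\operatorname{End}_A(M)=k$ from Proposition~\ref{Simple module}. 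This is correct and is exactly the route the cited sources take, so there is nothing to contrast.
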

We claim that the Brauer group of the moduli space is the  obstruction for the existence of a universal family. To prove this, we need the notion of  twisted sheaves. See \cite{caldararu2000derived} for the definition and properties of twisted sheaves.

\begin{lemma}\label{universal}
    There exists an $\acute{e}$tale covering $\operatorname{\{ }$$U_{i}$$\operatorname{\} }$ of  $\mathbf{M}_{A/X,P}$ such that on each $U_{i}\times X$ there exists a local universal sheaf $\mathcal{E}_{i}$. Furthermore, there exists an $\alpha\in \check{H}^{2}_{\acute{e}t}(\mathbf{M}_{A/X, P}, \mathcal{O}^{*}_{\mathbf{M}_{A/X, P}})$ and isomorphisms $\varphi_{ij}: \mathcal{E}_{j}|_{U_{i}\cap U_{j}}\to \mathcal{E}_{j}|_{U_{i}\cap U_{j}}$ that make $(\{\mathcal{E}_{i}\}, \{\varphi_{ij} \} )$ into a twisted sheaf.
    
\end{lemma}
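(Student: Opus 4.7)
The plan is to combine the \'etale-local triviality of the principal $\operatorname{PGL}(N)$-bundle $\pi\colon R \to \mathbf{M} := \mathbf{M}_{A/X,P}$ from Lemma \ref{Principle bundle} with the $\operatorname{GL}(N)$-linearization of $\widetilde{F}$ on $R\times X$, following the obstruction-theoretic construction of twisted universal sheaves in \cite{caldararu2000derived} and \cite[\S4.6]{huybrechts2010geometry}.

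First, I would invoke Lemma \ref{Principle bundle} to choose an \'etale cover $\{U_i\}$ of $\mathbf{M}$ over which $\pi$ is trivial, together with sections $s_i\colon U_i \to R$, and set $\mathcal{E}_i := (s_i \times \operatorname{id}_X)^*\widetilde{F}$. By construction the fiber of $\mathcal{E}_i$ over each $p\in U_i$ is the $A$-module represented by $\pi(s_i(p))$, so $\mathcal{E}_i$ is a flat family of generically simple torsion-free $A$-modules over $U_i$. To see $\mathcal{E}_i$ is universal on $U_i$ in the sense of Definition \ref{universal family}, I would take any flat family $\mathcal{F}$ on $S\times X$ with classifying map $\phi\colon S\to U_i$; any \'etale-local trivialization of $\mathcal{F}(m)$ produces a second lift $\tilde\phi_{\mathcal{F}}\colon S\to R$ of $\phi$ with $\tilde\phi_{\mathcal{F}}^*\widetilde{F}\cong \mathcal{F}$, and since $\tilde\phi_{\mathcal{F}}$ and $s_i\circ\phi$ both lift $\phi$, the $\operatorname{GL}(N)$-linearization of $\widetilde{F}$ identifies them up to a line bundle twist on $S$, as in the proof of Lemma \ref{Thm:Line}.

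Next, I would construct the transition isomorphisms. On an overlap $U_i\cap U_j$ the sections $s_i$ and $s_j$ have the same image in $\mathbf{M}$, so they differ by a unique morphism $\bar g_{ij}\colon U_i \cap U_j \to \operatorname{PGL}(N)$ with $s_j = \bar g_{ij}\cdot s_i$. After refining the cover, I lift each $\bar g_{ij}$ to $g_{ij}\colon U_i\cap U_j \to \operatorname{GL}(N)$, which is possible because $\operatorname{GL}(N)\to\operatorname{PGL}(N)$ is an \'etale-locally trivial $\mathbb{G}_m$-torsor. Pulling the $\operatorname{GL}(N)$-linearization isomorphism $(m\times \operatorname{id}_X)^*\widetilde{F}\cong (p_R\times \operatorname{id}_X)^*\widetilde{F}$ back along $(g_{ij}, s_i)\times \operatorname{id}_X$ yields
\[
\varphi_{ij}\colon \mathcal{E}_j|_{U_i\cap U_j} = (s_j\times\operatorname{id}_X)^*\widetilde{F} \xrightarrow{\sim} (s_i\times\operatorname{id}_X)^*\widetilde{F} = \mathcal{E}_i|_{U_i\cap U_j}.
\]

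Finally, I would verify the twisted cocycle identity. On a triple overlap the two lifts $g_{jk}g_{ij}$ and $g_{ik}$ of the common element $\bar g_{jk}\bar g_{ij}=\bar g_{ik}$ differ by a central scalar $\alpha_{ijk}\in \Gamma(U_i\cap U_j\cap U_k, \mathcal{O}^*_{\mathbf{M}})$; since $\widetilde{F}$ has $Z$-weight $1$ (the weight exploited in Lemma \ref{Thm:Line}), this discrepancy propagates through the linearization as $\varphi_{ij}\circ\varphi_{jk} = \alpha_{ijk}\cdot\varphi_{ik}$. Associativity of multiplication in $\operatorname{GL}(N)$ then forces the \v{C}ech $2$-cocycle relation on $\{\alpha_{ijk}\}$, producing a class $\alpha\in \check{H}^2_{\text{\'et}}(\mathbf{M}, \mathcal{O}^*_{\mathbf{M}})$ that turns $(\{\mathcal{E}_i\},\{\varphi_{ij}\})$ into an $\alpha$-twisted sheaf in the sense of \cite{caldararu2000derived}. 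The principal obstacle is bookkeeping: one must carefully fix sign and direction conventions for the $\operatorname{GL}(N)$-action on $R\times X$, the induced $\operatorname{PGL}(N)$-action on $R$, and the chosen lifts $g_{ij}$, so that the resulting scalars genuinely assemble into a $2$-cocycle rather than a coboundary. Once these conventions are fixed, everything else follows formally from Lemmas \ref{Principle bundle} and \ref{Thm:Line} and the standard linearization machinery.
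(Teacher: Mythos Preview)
Your proposal is correct and follows essentially the same route as the paper: both use Lemma \ref{Principle bundle} to trivialize the $\operatorname{PGL}(N)$-bundle \'etale-locally, produce local universal sheaves from $\widetilde{F}$ via the $\operatorname{GL}(N)$-linearization (the paper phrases this as applying a local version of Lemma \ref{Thm:Line} with an explicit weight-$1$ line bundle, you phrase it as pulling back along sections, which amounts to the same thing), and then glue to a twisted sheaf. The only difference is that the paper outsources the cocycle verification to \cite[A.5]{mukai1984moduli} and \cite[Proposition 3.3.2]{caldararu2000derived}, whereas you spell out the lift-and-compare argument for $g_{ij}$ explicitly; your version is more self-contained but not mathematically distinct.
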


\begin{proof} 
This proof is essentially from \cite[Proposition 3.3.2]{caldararu2000derived}.
By Lemma \ref{Principle bundle}, the quotient morphism $R\to \mathbf{M}_{A/X, P}$
is a principal $\operatorname{PGL}(N)$-bundle in the $\operatorname{\acute{e}tale}$ topology. Hence   $\operatorname{\acute{e}}$tale locally on $\mathbf{M}_{A/X,P}$, $R$ is isomorphic to the product of $\mathbf{M}_{A/X,P}$ and $\operatorname{PGL}(N)$. If $U$ is any open subset in $\textbf{M}_{A/X, P}$ over which $R$ is trivial and isomorphic to $\operatorname{PGL}(N)\times U\to U$, we can find a $\operatorname{GL}(N)$-linearized line bundles of $Z$-weight 1 over $U$: For example, $\operatorname{GL}(N)\times U\to \operatorname{PGL}(N) \times U $ is such a line bundle. Now applying the local version of Lemma \ref{Thm:Line}, we see that there exists a local universal sheaf $\mathcal{E}_{U}$ on $U\times X$.

The existence of the class $\alpha\in \check{H}_{\acute{e}t}^{2}(\mathbf{M}_{A/X, P}, \mathcal{O}^{*}_{\mathbf{M}_{A/X, P}})$ and of the isomorphisms $\varphi_{ij}$ that make $(\{\mathcal{E}_{i}   \},  \{ \varphi_{ij} \})$ into a twisted sheaf now follows in exactly the same way as in the proof of \cite[A.5] {mukai1984moduli}, and the uniqueness of $\alpha $ is a routine check.

\end{proof}

Recall that the Brauer group of a scheme $X$,  $\operatorname{Br}(X)$, is the group of isomorphism classes of Azumaya algebras on $X$ modulo similarity equivalence relation,  see \cite[page 141]{milne1980etale} for a precise definition.
The following lemma goes further:
\begin{lemma} [{\cite[Proposition 3.3.4]{caldararu2000derived}}] \label{twisted sheaf}
The  $\alpha$ in Lemma \ref{universal} lies in $\operatorname{Br}(\mathbf{M}_{A/X, P})$.
    
\end{lemma}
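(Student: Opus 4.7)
The plan is to exhibit an Azumaya algebra on $\mathbf{M}_{A/X,P}$ whose cohomology class in $H^{2}_{\text{\'et}}(\mathbf{M}_{A/X,P}, \mathcal{O}^{*}_{\mathbf{M}_{A/X,P}})$ equals $\alpha$. Recall that $\operatorname{Br}(\mathbf{M}_{A/X,P})$, viewed inside the cohomological Brauer group, consists of classes coming from a $\operatorname{PGL}(N)$-torsor under the connecting homomorphism for
\[
1 \to \mathbb{G}_{m} \to \operatorname{GL}(N) \to \operatorname{PGL}(N) \to 1.
\]
So it suffices to produce such a torsor whose image under $H^{1}_{\text{\'et}}(\mathbf{M}_{A/X,P}, \operatorname{PGL}(N)) \to H^{2}_{\text{\'et}}(\mathbf{M}_{A/X,P}, \mathcal{O}^{*}_{\mathbf{M}_{A/X,P}})$ is precisely $\alpha$.

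The natural candidate is already at hand. By Lemma \ref{Principle bundle} the quotient morphism $R \to \mathbf{M}_{A/X,P}$ is a principal $\operatorname{PGL}(N)$-bundle in the \'etale topology, hence it represents a class $[R] \in H^{1}_{\text{\'et}}(\mathbf{M}_{A/X,P}, \operatorname{PGL}(N))$. First I would form the associated bundle $\mathcal{A} := R \times^{\operatorname{PGL}(N)} M_{N}(k)$, where $\operatorname{PGL}(N)$ acts on the matrix algebra $M_{N}(k)$ by conjugation. This is a sheaf of $\mathcal{O}_{\mathbf{M}_{A/X,P}}$-algebras whose stalks are isomorphic to $M_{N}(k)$, and the standard argument (descent from the \'etale trivialization of $R$) shows that $\mathcal{A}$ is an Azumaya algebra on $\mathbf{M}_{A/X,P}$; its Brauer class is, by definition, the image $\delta([R])$ of $[R]$ under the connecting homomorphism $\delta$.

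The crux is to match $\delta([R])$ with $\alpha$. For this I would work on the same \'etale covering $\{U_{i}\}$ trivializing $R$ that was used to produce the local universal sheaves $\mathcal{E}_{i}$ in the proof of Lemma \ref{universal}. On each $U_{i}$ the trivialization of $R$ chooses a $\operatorname{GL}(N)$-linearized line bundle $B_{i}$ of $Z$-weight $1$, and Lemma \ref{Thm:Line} gives the universal family $\mathcal{E}_{i} = \mathcal{H}om(p^{*}B_{i}, \widetilde{F})^{\operatorname{GL}(N)}$. On overlaps $U_{i}\cap U_{j}$ the two choices of local trivialization of $R$ differ by a $\operatorname{PGL}(N)$-valued cocycle $g_{ij}$, while the isomorphisms $\varphi_{ij}: \mathcal{E}_{j}|_{U_{i}\cap U_{j}} \to \mathcal{E}_{i}|_{U_{i}\cap U_{j}}$ are obtained by lifting $g_{ij}$ to $\operatorname{GL}(N)$ (these lifts exist locally because $\operatorname{GL}(N) \to \operatorname{PGL}(N)$ is smooth surjective). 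The failure of the lift to satisfy the cocycle condition is exactly the \v{C}ech 2-cocycle $\{\alpha_{ijk}\}$ built from $\varphi_{ij}\varphi_{jk}\varphi_{ki}$, and this is the same 2-cocycle computing $\delta([R])$. So $\delta([R]) = \alpha$ as classes in $H^{2}_{\text{\'et}}(\mathbf{M}_{A/X,P}, \mathcal{O}^{*}_{\mathbf{M}_{A/X,P}})$.

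The main obstacle I anticipate is the careful bookkeeping in the last step, namely producing the two cocycles from the same refinement of \'etale covers and verifying that the specific lifts of $g_{ij}$ used to form the $\varphi_{ij}$ agree with the lifts implicit in the definition of the connecting homomorphism $\delta$. Modulo this cocycle-matching, the argument concludes that $\alpha$ is the class of the Azumaya algebra $\mathcal{A}$, and hence $\alpha \in \operatorname{Br}(\mathbf{M}_{A/X,P})$.
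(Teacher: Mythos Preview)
Your approach is correct and is essentially the standard argument (indeed, the one in C\u{a}ld\u{a}raru's thesis): identify $\alpha$ with the image of the class $[R]\in H^{1}_{\text{\'et}}(\mathbf{M}_{A/X,P},\operatorname{PGL}(N))$ under the connecting map for $1\to\mathbb{G}_m\to\operatorname{GL}(N)\to\operatorname{PGL}(N)\to 1$, and observe that this image is represented by the Azumaya algebra $R\times^{\operatorname{PGL}(N)}M_N(k)$. Note, however, that the paper does not supply its own proof of this lemma at all; it simply quotes the result as \cite[Proposition~3.3.4]{caldararu2000derived}, so there is nothing to compare beyond saying that your sketch recovers C\u{a}ld\u{a}raru's argument.
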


Now we prove the main theorem in this section.
\begin{theorem}\label{Brauer group of moduli space}
    If the Brauer group of the coarse moduli space $\mathbf{M}_{A/X,P}$ is trivial, then there exists a universal family over it.
\end{theorem}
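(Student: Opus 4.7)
The plan is to leverage the twisted sheaf framework already set up in Lemmas \ref{universal} and \ref{twisted sheaf}. Those lemmas produce an étale cover $\{U_i\}$ of $\mathbf{M}_{A/X,P}$, local universal sheaves $\mathcal{E}_i$ on $U_i \times X$, and isomorphisms $\varphi_{ij}$ on pairwise overlaps whose failure to satisfy the cocycle condition on triple overlaps is measured by a class $\alpha \in \check{H}^2_{\acute{e}t}(\mathbf{M}_{A/X,P}, \mathcal{O}^*)$ lying in $\operatorname{Br}(\mathbf{M}_{A/X,P})$. Under the hypothesis, $\alpha$ vanishes.

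Unwinding the Čech construction, the vanishing of $\alpha$ means that the 2-cocycle $\varphi_{jk}\varphi_{ij}\varphi_{ik}^{-1} \in \mathcal{O}^*(U_{ijk})$ is the coboundary of some 1-cochain $(\lambda_{ij})$; replacing each $\varphi_{ij}$ by $\lambda_{ij}^{-1}\varphi_{ij}$ produces new gluing isomorphisms that truly satisfy the cocycle condition. By étale descent for coherent sheaves, the data $(\mathcal{E}_i, \lambda_{ij}^{-1}\varphi_{ij})$ glue to a global coherent sheaf $\mathcal{E}$ on $\mathbf{M}_{A/X,P} \times X$. The $A_{\mathbf{M}_{A/X,P}}$-module structure, coherence, and flatness over $\mathbf{M}_{A/X,P}$ are all étale-local, so they descend; thus $\mathcal{E}$ is a flat family of generically simple torsion free $A$-modules with Hilbert polynomial $P$ parameterized by $\mathbf{M}_{A/X,P}$.

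It then remains to verify the universal property in Definition \ref{universal family}. Given a family $\mathcal{F}$ on $S \times X$ with classifying morphism $\phi_{\mathcal{F}} : S \to \mathbf{M}_{A/X,P}$, the two pullbacks $\mathcal{F}_{k(s)}$ and $(\phi^*_{\mathcal{F},X}\mathcal{E})_{k(s)}$ represent the same moduli point for every $s \in S$, hence are isomorphic as $A_{k(s)}$-modules. Proposition \ref{Simple module}(ii) then guarantees that $\operatorname{Hom}_{A_{k(s)}}(\mathcal{F}_{k(s)}, (\phi^*_{\mathcal{F},X}\mathcal{E})_{k(s)})$ is one-dimensional over $k(s)$. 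A cohomology-and-base-change argument applied to the noncommutative $\mathcal{H}\kern -.5pt om$-sheaf produces the line bundle
\[
L \;:=\; p_{*}\,\mathcal{H}\kern -.5pt om_{A_S}\bigl(\mathcal{F},\; \phi^*_{\mathcal{F},X}\mathcal{E}\bigr)
\]
on $S$, together with an evaluation isomorphism $\mathcal{F} \otimes p^*L \xrightarrow{\sim} \phi^*_{\mathcal{F},X}\mathcal{E}$ of $A_S$-modules.

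The main obstacle I anticipate is the final step, namely justifying base change in the $A$-module setting so that $p_*\mathcal{H}\kern -.5pt om_{A_S}$ is in fact a line bundle and the evaluation map is an isomorphism. One has to verify vanishing of the relevant higher $\operatorname{Ext}$ groups on fibers and check that forming $\mathcal{H}\kern -.5pt om_{A_S}$ commutes with restriction to fibers. The one-dimensionality of fiberwise endomorphisms from Proposition \ref{Simple module}(ii) is the essential input, but a careful noncommutative analogue of the Grauert-style base change argument is required to make the descent of the local isomorphisms into a global one precise.
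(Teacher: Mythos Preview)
Your proposal is correct and follows essentially the same approach as the paper: use Lemmas \ref{universal} and \ref{twisted sheaf} to produce the $\alpha$-twisted sheaf, observe that triviality of $\operatorname{Br}(\mathbf{M}_{A/X,P})$ forces $\alpha$ to vanish, and then note that an untwisted ``twisted sheaf'' is an honest sheaf. The paper's proof is in fact just two sentences citing those lemmas; your unwinding of the \v{C}ech coboundary and explicit gluing is a faithful expansion of what the paper leaves implicit.

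Where you go beyond the paper is the final paragraph, in which you attempt to verify the universal property of the glued sheaf directly via a base-change argument for $p_*\mathcal{H}\kern -.5pt om_{A_S}$. The paper does not do this; it relies on the fact that the local sheaves $\mathcal{E}_i$ are already \emph{local universal sheaves} (this is built into Lemma \ref{universal}, which invokes the local version of Lemma \ref{Thm:Line}), so that universality is inherited on overlaps and hence globally once the gluing is honest. Your alternative route is not wrong, but the base-change subtleties you flag are genuinely delicate, and the paper sidesteps them entirely by working with the descent-from-$R$ picture rather than reproving universality from scratch.
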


\begin{proof}
We know that if the Brauer class $\alpha$ is trivial, then the twisted sheaf is the usual sheaf. By Lemma \ref{universal} and \ref{twisted sheaf}, the proof is completed.
\end{proof}
Next we apply the general theory of moduli spaces of $A$-modules to our case. In the following, we use same notations as in Section \ref{section 2}.

Let $A=\mathcal{O}_{Y}\oplus \mathcal{O}_{Y}(E-E')_{\sigma}$, where $E$ and $E'$ are two disjoint exceptional curves.  We determine the moduli space of $A$-line bundles. By contracting exceptional curves of $Y$ to $\mathbb{P}^{2}$ appropriately, we may assume $E=E_{1}$ and $\sigma (E')=L_{12}$ is the strict transform of the line through $p_{1}, p_{2}$.

 Let $M$ be an $A$-line bundle and $L=\mathcal{O}_{Y}(E-E')$. By Lemma \ref{restrction of c1}, $c_{1}(M)=L_{\sigma}\otimes_{Y}\mathcal{O}_{Y}(nH)$, for some  $n\in \mathbb{Z}.$ However, \cite[Proposition 5.2]{chan2011moduli} shows that it is sufficient to study the cases $n=0$ and $n=1$. 

 Let $\Delta(M):= 4c_{2}(M)-c_{1}(M)^{2}$ be the discriminant. By Lemma \ref{semistable}, $M$ is $\mu_{H}$-semistable on $Y$. By Bogomolov's inequality, $\Delta\geq 0$. So  for a fixed first Chern class $c_{1}$, the second Chern class $c_2$ is bounded below. 

 \begin{lemma}  [{\cite[Proposition 5.2]{chan2011moduli}}]
  For $A$-line bundles with $c_{1}=L_{\sigma}\otimes_{Y}\mathcal{O}_{Y}(nH)$, where $n=0$ or $1$, the minimal second Chern classes are $0$ and $1$, respectively.
 \end{lemma}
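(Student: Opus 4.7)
The plan combines the Bogomolov inequality for $\mu_H$-semistable bundles with the extra rigidity imposed by the cyclic algebra structure of $A$, and concludes by exhibiting $A$-line bundles that realize the stated minima.

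First I would compute $c_1^2$ explicitly. Using Lemma~\ref{des blow}, $E' = \sigma(L_{12}) = C_{12}$, so $E'$ has a concrete representative in $\operatorname{Pic}(Y)$. Combining $E^2 = (E')^2 = -1$, the disjointness $E \cdot E' = 0$, and the intersection numerics for the polarization $H$, one reads off $c_1^2$ in both cases. By Lemma~\ref{semistable}, the bundle $M$ is $\mu_H$-semistable as a rank-$2$ $\mathcal{O}_Y$-module, so Bogomolov's inequality $\Delta(M) = 4c_2 - c_1^2 \geq 0$ gives the integer lower bound $c_2 \geq \lceil c_1^2/4 \rceil$.

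This already settles the $n = 0$ case: the bound is $c_2 \geq 0$, and equality is realized by $M = A$ itself. Indeed $A$ is free of rank $1$ over itself and hence an $A$-line bundle, and as an $\mathcal{O}_Y$-module $A \cong \mathcal{O}_Y \oplus L$ with $L = \mathcal{O}_Y(E - E')$, so $c_1({}_Y A) = E - E'$ and $c_2({}_Y A) = 0$.

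For the case $n = 1$ the Bogomolov bound alone is not sharp, and the main obstacle is to rule out the borderline configurations with $c_2 \leq 0$. I would argue by contradiction: such an $M$ would (nearly) saturate the Bogomolov inequality, and the classification of $\mu_H$-semistable rank-$2$ bundles on the del Pezzo surface $Y$ of minimal discriminant forces $M$ to be a (possibly trivial) extension of line bundles $L_1, L_2$ of equal slope with $L_1 \otimes L_2 \cong L_\sigma \otimes \mathcal{O}_Y(H)$. An $A$-module structure on $M$ is equivalent to an $\mathcal{O}_Y$-linear map $\phi : L \otimes \sigma^* M \to M$ whose composition with $\sigma^*\phi$ equals the cyclic-algebra multiplication $\psi : L \otimes \sigma^* L \to \mathcal{O}_Y$; enforcing compatibility between $\phi$ and the putative line-bundle filtration of $M$, together with the $\sigma$-equivariance constraints inherited from $\psi$, rules out each admissible pair $(L_1, L_2)$ by a direct numerical and cohomological analysis. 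Existence of an $A$-line bundle with $c_2 = 1$ would then be established by an explicit construction, for instance via a suitable extension class in $\operatorname{Ext}^1_A$ or a generic elementary modification of a twist of $A$ arranged so that the total first Chern class equals $L_\sigma \otimes \mathcal{O}_Y(H)$.
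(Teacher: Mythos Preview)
The paper does not give its own proof of this lemma; it is quoted verbatim from \cite[Proposition 5.2]{chan2011moduli}. So there is no in-paper argument to compare against, and your proposal has to be judged on its own merits.

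Your treatment of the $n=0$ case is correct: with $E=E_1$ and $E'=C_{12}$ disjoint one gets $c_1^2=(E-E')^2=-2$, Bogomolov gives $c_2\ge 0$, and $A=\mathcal{O}_Y\oplus L$ realizes $c_2=0$.

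The $n=1$ case, however, is only an outline, and the two load-bearing steps are not supplied. First, the assertion that a $\mu_H$-semistable rank-$2$ bundle on $Y$ with $\Delta=0$ must be an extension of line bundles of equal slope is not automatic: a priori such a bundle could be $\mu_H$-stable. You need an extra argument to exclude this, for instance the observation that on $Y$ one has $\chi(M,M)=4-\Delta=4$, while $\mu_H$-stability would force $\hom(M,M)=1$ and (via Serre duality with $\omega_Y=\mathcal{O}_Y(-H)$) $\operatorname{ext}^2(M,M)=\hom(M,M(-H))=0$, contradicting $\chi(M,M)=4$. Without something like this, the filtration you want does not follow. Second, even granting the filtration, the sentence ``rules out each admissible pair $(L_1,L_2)$ by a direct numerical and cohomological analysis'' is exactly where the content lies, and you have not identified the admissible pairs, let alone eliminated them; the interaction between the $A$-module map $\phi:L\otimes\sigma^*M\to M$ and a line-bundle filtration is genuinely delicate, since $\phi$ need not preserve the filtration. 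Your existence sketch is likewise vague; note that the paper itself supplies explicit examples with $c_2=1$, namely the split bundles $\mathcal{E}_{c_i}\simeq A\otimes_Y\mathcal{O}_Y(E_j)$, so this part at least has an easy resolution once you know where to look.
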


By definition, an $A$-line module $M$  is a generically simple torsion free $A$-module. The converse is true if $c_{2}(M)$ is minimal:
\begin{lemma}
 Fix the first Chern class $c_{1}$.  An $A$-line bundle $M$ is a generically simple torsion free $A$-module. Conversely, a generically simple torsion free  $A$-module $M$ with minimal second Chern class is an $A$-line bundle. 
\end{lemma}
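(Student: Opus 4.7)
The forward implication is an unwrapping of definitions. If $M$ is an $A$-line bundle then by Lemma~\ref{module on Y}, $_YM$ is locally free of rank $2$, in particular torsion free. The condition $\dim_{A_\eta}(A_\eta\otimes_A M)=1$ combined with the fact that $A_\eta$ is a quaternion division algebra (the generic stalk of a maximal quaternion order ramified along the nonempty curve $R$) identifies $M_\eta$ with the unique simple left $A_\eta$-module, giving generic simplicity.

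For the converse, the plan is to compare $M$ with its reflexive hull $M^{\vee\vee}:=\mathcal{H}\kern -.5pt om_{\mathcal{O}_Y}(\mathcal{H}\kern -.5pt om_{\mathcal{O}_Y}(M,\mathcal{O}_Y),\mathcal{O}_Y)$. First I would equip $M^{\vee\vee}$ with a left $A$-module structure: the $\mathcal{O}_Y$-dual $M^\vee$ carries a natural right $A$-action by $(\phi\cdot a)(m)=\phi(am)$, and, using the standard involution $A\cong A^{\circ}$ from Proposition~\ref{left and right} to convert right into left, $M^{\vee\vee}$ acquires a canonical left $A$-module structure. A direct computation shows the canonical map $M\to M^{\vee\vee}$ is $A$-linear, and it is injective because $M$ is torsion free.

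Because $Y$ is a smooth surface, every reflexive coherent sheaf on $Y$ is locally free, so $M^{\vee\vee}$ is locally free of the same rank $2$ as $M$. By Lemma~\ref{module on Y} this means $M^{\vee\vee}$ is itself an $A$-line bundle, so Lemma~\ref{restrction of c1} applies to it. The cokernel $Q:=M^{\vee\vee}/M$ is supported in codimension at least two, hence is zero-dimensional, and chasing Chern characters through $0\to M\to M^{\vee\vee}\to Q\to 0$ gives
\[
c_1(M)=c_1(M^{\vee\vee}),\qquad c_2(M)=c_2(M^{\vee\vee})+\operatorname{length}(Q).
\]
Thus $M^{\vee\vee}$ is an $A$-line bundle with the same prescribed first Chern class as $M$, so the hypothesis that $c_2(M)$ is minimal among $A$-line bundles with that $c_1$ forces $c_2(M^{\vee\vee})\geq c_2(M)$. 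Combined with the displayed equation this gives $\operatorname{length}(Q)=0$, whence $M=M^{\vee\vee}$ is locally free of rank $2$ on $Y$ and therefore an $A$-line bundle.

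The main point requiring care is the $A$-module structure on $M^{\vee\vee}$ and the $A$-linearity of biduality; here Proposition~\ref{left and right} is essential, since without the standard involution the $\mathcal{O}_Y$-dual would land in right $A$-modules rather than left ones. Once that functoriality is established, the rest is a standard reflexive-hull-plus-minimality argument.
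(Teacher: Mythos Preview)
Your argument is correct and follows essentially the same route as the paper: both pass to the double dual $M^{**}$, observe it is an $A$-line bundle with the same $c_1$ and $c_2(M^{**})\le c_2(M)$, and invoke minimality. The paper outsources this step to \cite[Lemma~2.2.3]{lerner2013line}, whereas you spell it out.

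One small remark: you do not actually need Proposition~\ref{left and right}. If $M$ is a left $A$-module then $M^\vee$ is naturally a right $A$-module via $(\phi\cdot a)(m)=\phi(am)$, and dualizing once more makes $M^{\vee\vee}$ a left $A$-module directly via $(a\cdot\psi)(\phi)=\psi(\phi\cdot a)$; the biduality map is $A$-linear for this structure without any involution. Your use of the involution is harmless (applying it twice recovers the same structure), just superfluous.
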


\begin{proof}
    
The first assertion is clear. For the second statement, if $M$ is not an $A$-line bundle, then $M$ is not locally free on $Y$. Let $M^{*}:= \mathcal{H}\kern -.5pt om(M, \mathcal{O}_{Y}) $ be the dual sheaf of $M$. By \cite[Lemma 2.2.3]{lerner2013line}, $M^{**}$ is an $A$-line bundle with  $c_{1}(M^{**})=c_{1}(M)$ and $c_{2}(M^{**})<c_{2}(M)$, contradicting our assumption. Thus $M$ is an $A$-line bundle.
\end{proof}
So if we fix Chern classes with the minimal second Chern class, then the moduli space of generically simple torsion free $A$-modules is just the moduli space of $A$-line bundles. Chan and Kulkarni construct the first example of the  moduli space of $A$-line bundles in \cite{chan2011moduli}.

\begin{lemma} [{\cite[Proposition 6.1]{chan2011moduli}}]

Assume $M$ is an $A$-line bundle with $c_{1}=L, c_{2}=0$. Then $M\cong A$ as an $A$-line bundle. The coarse moduli space of such line bundles is a point.
    
\end{lemma}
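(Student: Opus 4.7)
\emph{Plan.} The approach is to exhibit a nonzero $A$-linear map $A \to M$ and show it must be an isomorphism via a Chern-class argument. First observe that $A$ itself is such an $A$-line bundle: as an $\mathcal O_Y$-module $A \cong \mathcal O_Y \oplus L$, so $c(_YA) = 1 + L$, giving $c_1(_YA) = L$ and $c_2(_YA) = 0$. Moreover, because $A$ is ramified along $R$, the generic stalk $A_\eta$ is a quaternion division algebra, so both $A$ and any $A$-line bundle $M$ with the prescribed invariants are generically simple torsion-free $A$-modules in the sense of Proposition~\ref{Simple module}. That proposition then tells us that every nonzero $A$-map $\phi : A \to M$ is injective, and the cokernel of such a $\phi$ has vanishing Chern character (since $\operatorname{ch}(M) = \operatorname{ch}(A)$), hence is zero, forcing $\phi$ to be an isomorphism of $A$-modules.

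It therefore remains to produce a nonzero $\phi$; since such a map is determined by the global section $\phi(1) \in H^0(Y, M)$, it suffices to show $h^0(Y, M) \geq 1$. With $E = E_1$ and $E' = C_{12}$ (so $L = E_1 - C_{12}$), one computes $L^2 = -2$ and $L \cdot K_Y = 0$, and Hirzebruch-Riemann-Roch on $Y$ gives $\chi(Y, M) = 1$. To upgrade this to a global section, I show $H^2(Y, M) = 0$. By Serre duality on $Y$, $H^2(Y, M) \cong H^0(Y, M^\vee \otimes \omega_Y)^*$, and with the pullback polarization $H = \pi^* h$, Lemma~\ref{semistable} makes both $M$ and $M^\vee \otimes \omega_Y$ $\mu_H$-semistable. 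A direct slope computation then yields $\mu_H(M^\vee \otimes \omega_Y) = -\mu_H(M) + K_Y \cdot H = 1 - 3 = -2 < 0$, so $M^\vee \otimes \omega_Y$ admits no nonzero section, and $h^2(Y, M) = 0$. Combined with $\chi(Y,M) = 1$ this forces $h^0(Y, M) \geq 1$.

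The moduli statement then follows: every $A$-line bundle with the prescribed invariants is isomorphic to $A$, so the coarse moduli space is supported at a single closed point, which is reduced since the tangent space $\operatorname{Ext}^1_A(A, A) = H^1(Y, A) = H^1(Y, \mathcal O_Y) \oplus H^1(Y, L) = 0$ vanishes on the rational surface $Y$ (the second summand being zero from $\chi(L) = 0$ together with slope-based vanishing of $H^0(L)$ and $H^2(L)$). The main obstacle in this plan is the slope-vanishing step: one must verify both that the polarization supplied by Lemma~\ref{semistable} is compatible with the pullback $H = \pi^* h$ used here, and that the inequality $\mu_H(M^\vee \otimes \omega_Y) < 0$ is strict enough to rule out nonzero sections under $\mu_H$-semistability; a different choice of polarization could a priori disrupt this slope comparison, so the choice must be made with some care.
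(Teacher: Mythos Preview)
The paper does not prove this lemma; it is quoted verbatim from \cite[Proposition~6.1]{chan2011moduli}. Your argument is a correct, self-contained proof and follows the natural strategy (produce a nonzero $A$-map $A\to M$ from a global section, then force it to be an isomorphism by comparing Chern characters). Two small corrections are in order.

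First, your displayed slope computation $-\mu_H(M)+K_Y\cdot H=1-3$ uses the numbers attached to the blow-up class $\phi^{*}h$, not to the double-cover class $H=\pi^{*}h$ that Lemma~\ref{semistable} actually refers to. With the correct $H$ one has $L\cdot H=(E-E')\cdot H=1-1=0$ and $K_Y\cdot H=-H^{2}=-2$, so the arithmetic is $0-2=-2$. The conclusion $\mu_H(M^{\vee}\otimes\omega_Y)=-2<0$ is unchanged, and since $\phi^{*}h$ is only nef (it contracts the $E_i$), it is important to run the semistability argument with $H=\pi^{*}h$; this is precisely the compatibility issue you flag in your last paragraph, and it resolves cleanly.

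Second, ``slope-based vanishing of $H^{0}(L)$'' does not work as stated: $L=\mathcal O_Y(E-E')$ has $H$-degree $0$, so slope alone does not exclude sections. One argues instead that $|E-E'|=\emptyset$ directly (if $D\in|E-E'|$ then $D+E'\sim E$, contradicting rigidity of the $(-1)$-curve $E$ when $E,E'$ are disjoint); the paper carries out exactly this computation later in Section~\ref{section 5}. The $H^{2}(Y,L)$ vanishing, by contrast, is a genuine degree argument via Serre duality, as you say. With these two fixes your tangent-space computation $\operatorname{Ext}^1_A(A,A)=H^1(Y,\mathcal O_Y)\oplus H^1(Y,L)=0$ goes through and the moduli space is indeed a reduced point.
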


We focus on the other case: 

\begin{proposition} [{\cite[Theorem 6.11]{chan2011moduli}}] \label{curve C}

    The coarse moduli space  of $A$-line bundles with Chern classes $c_{1}=L_{\sigma}\otimes_{Y} \mathcal{O}_{Y}(H)=E+\sigma E'$ and $c_{2}=1$ is a smooth projective curve of genus 2.
\end{proposition}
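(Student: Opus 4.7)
The plan is to reduce the problem to rank-$2$ bundles on $Y$ via Lemma~\ref{module on Y}, parametrize candidates by an explicit Serre-type construction, and realize the $A$-module compatibility as a divisorial condition cutting out a curve of genus $2$.

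First I would compute $\chi(M) = 2$ by Hirzebruch--Riemann--Roch on the del Pezzo surface $Y$. Using Lemma~\ref{semistable} and Serre duality on $Y$, one obtains $h^2(M) = 0$, hence $h^0(M) \geq 2$. A nonzero section produces a short exact sequence
$$0 \to \mathcal{O}_Y(D) \to M \to \mathcal{O}_Y(c_1 - D) \otimes \mathcal{I}_Z \to 0$$
with $Z$ zero-dimensional. The $\mu_H$-semistability combined with the decomposition $c_1 = E + \sigma E'$ into two disjoint $(-1)$-curves should force $D \in \{E, \sigma E'\}$; after fixing $D = E$ and noting $E \cdot \sigma E' = 1$, the subscheme $Z$ reduces to at most one point on $Y$, so $M$ is determined by an extension class together with a point $p$ lying in some subvariety of $Y$.

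Next I would impose the $\mathcal{B}$-module condition (where $\mathcal{B} = \mathcal{O}_Y \oplus L_\sigma$ is the cyclic algebra on $Y$ with $\pi_* \mathcal{B} = A$), namely an involutive isomorphism $\phi\colon L \otimes \sigma^* M \xrightarrow{\sim} M$ satisfying $\phi \circ (\mathrm{id}_L \otimes \sigma^* \phi) = \mathrm{id}$ under the trivialization $L \otimes \sigma^* L \cong \mathcal{O}_Y$. Under the extension description, $\phi$ must relate the sub-line bundle $\mathcal{O}_Y(E) \hookrightarrow M$ to the sub-line bundle $L \otimes \sigma^* \mathcal{O}_Y(E) \hookrightarrow L \otimes \sigma^* M \cong M$ in a $\sigma$-compatible way. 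Translating this into a constraint on the pair (extension class, $p$) should cut out a curve $C' \subset Y$ in a specific linear system; computing its divisor class and applying adjunction, I expect $p_a(C') = 2$. Proposition~\ref{Simple module} then ensures that the induced map $C' \to \mathbf{M}_{A/\mathbb{P}^2, P}$ is injective, realizing the moduli space as a genus-$2$ curve.

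Smoothness would follow from deformation theory: the tangent space $\operatorname{Ext}^1_A(M,M)$ should be $1$-dimensional, which I would verify by computing $\chi(\operatorname{End}_A(M))$ using Hirzebruch--Riemann--Roch for $A$-modules, combined with the Serre duality for $A$-modules stated earlier, a semistability-based vanishing of $\operatorname{Ext}^2_A(M,M)$, and $\operatorname{End}_A(M) = k$ from Proposition~\ref{Simple module}. The main obstacle I anticipate is the middle step: pinning down exactly the divisorial condition imposed by the involutive compatibility of $\phi$, verifying that the locus is a divisor (one equation rather than several), and confirming that its arithmetic genus on $Y$ is precisely $2$ rather than being empty, obstructed, or of the wrong class.
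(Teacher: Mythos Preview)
This proposition is not proved in the present paper; it is quoted from \cite[Theorem~6.11]{chan2011moduli} and used as a black box. There is therefore no proof here to compare against directly, but the paper records enough of the Chan--Kulkarni construction (Lemma~\ref{CK} and the description preceding Lemma~\ref{EXTA1}) to evaluate your outline.

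Your broad strategy---pass to rank-$2$ bundles on $Y$, present each $M$ as an extension, then impose the $A$-structure---is the right one and matches theirs. Two specifics diverge. First, the sub-line bundle they use is $\mathcal{O}_Y$, not $\mathcal{O}_Y(E)$: Lemma~\ref{CK} records
\[
0 \longrightarrow \mathcal{O}_Y \longrightarrow M \longrightarrow I_p\,\mathcal{O}_Y(E+\sigma E') \longrightarrow 0
\]
for some $p\in Y$; your claim that semistability forces $D\in\{E,\sigma E'\}$ is not correct as stated, since $D=0$ is certainly allowed and is in fact what is used. Second, and this is the substantive gap, the moduli space is \emph{not} realized as a divisor $C'\subset Y$ cut out by the involutive compatibility. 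In Chan--Kulkarni the curve $C$ appears as a double cover $\rho\colon C\to\mathbb{P}^1$ branched at six points, and the genus is read off via Riemann--Hurwitz rather than by adjunction on $Y$. The six branch points are the split bundles $A\otimes_Y\mathcal{O}_Y(E_j)$ listed before Lemma~\ref{EXTA1}, and the covering involution interchanges the two $A$-module structures on a fixed underlying $\mathcal{O}_Y$-bundle via the functor $Au\otimes_A-$ of Lemma~\ref{B-End}. Your own hesitation about the ``divisorial condition'' step is therefore well-founded: that mechanism is not how the genus is obtained.

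Your smoothness argument via $\operatorname{ext}^1_A(M,M)=1$ is correct in spirit and is exactly what is used; the paper invokes this Kodaira--Spencer identification inside the proof of Lemma~\ref{EXTA1}.
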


Denote the smooth projective genus 2 curve in the Proposition \ref{curve C} by $C$. We have the following theorem.

\begin{theorem} \label{universal on curve}
    There is a universal family of $A$-line bundles $\mathcal{E}_{A}$ over $C$. For each point $p\in C$, $\mathcal{E}_{k(p)}$ is an $A$-line bundle with $c_{1}=L_{\sigma}\otimes_{Y} \mathcal{O}_{Y}(H)$ and $c_{2}=1$. 
    
\end{theorem}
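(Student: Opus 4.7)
The strategy is to apply Theorem \ref{Brauer group of moduli space}. That theorem guarantees a universal family as soon as $\operatorname{Br}(\mathbf{M}_{A/X,P})=0$, and by Proposition \ref{curve C} our moduli space $\mathbf{M}_{A/\mathbb{P}^2,P}$, with $P$ the Hilbert polynomial determined by $c_1=L_\sigma\otimes_{Y}\mathcal{O}_Y(H)$ and $c_2=1$, is exactly the smooth projective genus $2$ curve $C$. Thus the entire assertion about the existence of $\mathcal{E}_A$ reduces to showing $\operatorname{Br}(C)=0$.

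To establish this vanishing I would invoke Tsen's theorem: the function field $K(C)$ has transcendence degree $1$ over the algebraically closed field $k$ of characteristic zero, hence is a $C_1$-field, and so $\operatorname{Br}(K(C))=0$. Because $C$ is regular and integral, the restriction map $\operatorname{Br}(C)\hookrightarrow \operatorname{Br}(K(C))$ is injective (this is the Auslander--Goldman injectivity for regular schemes, combined with de Jong's identification of the cohomological and Azumaya-theoretic Brauer groups for smooth quasi-projective schemes). Consequently $\operatorname{Br}(C)=0$, and Theorem \ref{Brauer group of moduli space} immediately produces a universal family $\mathcal{E}_A$ on $C\times \mathbb{P}^2$.

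The claim about fibers is automatic from Definition \ref{universal family}: for any $p\in C$, the restriction $\mathcal{E}_{k(p)}$ is the $A$-module whose isomorphism class is represented by the point $p$, and by Proposition \ref{curve C} every such point parametrizes an $A$-line bundle with $c_1=L_\sigma\otimes_{Y}\mathcal{O}_Y(H)$ and $c_2=1$. The only non-formal ingredient in the proof is the vanishing $\operatorname{Br}(C)=0$, and that is classical; I do not anticipate any real obstacle.
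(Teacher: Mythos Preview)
Your proposal is correct and follows exactly the same approach as the paper: invoke Tsen's theorem to conclude $\operatorname{Br}(C)=0$ for the smooth projective curve $C$ over the algebraically closed field $k$, and then apply Theorem~\ref{Brauer group of moduli space}. The paper's proof is in fact terser than yours, simply citing Tsen's theorem without spelling out the $C_1$-field or Auslander--Goldman details.
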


\begin{proof}
    
Since $C$ is a smooth curve, $\operatorname{Br}(C)=0$ by Tsen's theorem. Then the theorem follows from Theorem \ref{Brauer group of moduli space}.

\end{proof}

\section{Derived category of $A$-modules} \label{section 4}

In this section, we study the bounded derived  category of left $A$-modules for the order $A$ above. Let $X$ be a scheme and $\operatorname{Coh}(X)$ be the category of coherent sheaves on $X$. Denote $D^{b}(X)$ as the bounded derived category of the abelian category $\operatorname{Coh}(X)$, i.e.~$D^{b}(X):= D^{b}(\operatorname{Coh}(X))$. For basic properties of derived categories and Fourier-Mukai transforms, see \cite{huybrechts2006fourier}.

Recall that $A$ is a maximal quaternion order on $\mathbb{P}^{2}$ ramified along a smooth quartic $R$. Let $\operatorname{QCoh}(\mathbb{P}^{2}, A)$ be the category of quasicoherent sheaves of  left $A$-modules. Since $A$ is locally free as an $\mathcal{O}_{\mathbb{P}^{2}}$-module, by \cite[Section 2]{kuznetsov2008derived}, this category has nice properties. In particular, it has enough injectives and enough locally free objects. Let $\operatorname{Coh}(\mathbb{P}^{2}, A)$ be the category of coherent sheaves of  left $A$-modules and $D^{b}(\mathbb{P}^{2}, A):=D^{b}(\operatorname{Coh}(\mathbb{P}^{2}, A)).$ For basic properties of derived categories of noncommutative varieties, see \cite{kuznetsov2008derived}. 

 By arguments in Sections \ref{section 2} and \ref{section 3}, we may assume that  $A:= \mathcal{O}_{Y}\oplus \mathcal{O}_{Y}(E-E')$, where $E=E_{1}$ and $E'$ is such that $\sigma(E')=L_{12}$. Here the curves $E_1$ and $L_{12}$ are as in the last section. Note that the order $A$ depends on the ramification curve $R$, but the computations are essentially independent of $R$. 
 
 Let $C$ be the moduli space described in Proposition \ref{curve C}.   By Theorem \ref{universal on curve}, we know that there is a universal family of $A$-line bundles  $\mathcal{E}_{A}$ over $C \times \mathbb{P}^2$. Let $p: C\times \mathbb{P}^{2} \to C$, and $q:C\times \mathbb{P}^{2} \to \mathbb{P}^{2}$ be the projections on the first and second factor respectively. By definition of a universal family, $\mathcal{E}_{A}$ is a coherent sheaf on $C\times \mathbb{P}^{2}$ with a left $A_{C}:=q^{*}A$-module structure. For simplicity, we will use usual functors to represent the derived functors. For example, $Lq^{*}$, $Rq_{*}$, and $\otimes^{L}$ will be denoted as $q^{*}$, $q_{*}$, and $\otimes$, respectively.

\begin{lemma}\label{D-F-M}
    
Let $\mathcal{E}_{A}$ be the universal family described in Theorem \ref{universal on curve}. Then using  $\mathcal{E}_{A}$ as the Fourier-Mukai kernel, we get a well-defined functor:
$$\Phi_{\mathcal{E}_{A}}: D^{b}(C)\longrightarrow D^{b}(\mathbb{P}^{2}, A), \quad \mathcal{F}^{\bullet}\longmapsto q_{*}(p^{*}\mathcal{F}^{\bullet}\otimes \mathcal{E}_{A}).$$
 Moreover, let $t\in C$ be a closed point and $\mathcal{O}_{t}$ be  the corresponding skyscraper sheaf. Then $\Phi_{\mathcal{E}_{A}}(\mathcal{O}_{t})\simeq \mathcal{E}_{t}$,  where $\mathcal{E}_{t}$ is the $A$-line bundle corresponding the point $t$.
\end{lemma}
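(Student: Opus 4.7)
The plan is to verify three things in sequence: (a) the output of the formula $\mathcal{F}^\bullet \mapsto q_*(p^*\mathcal{F}^\bullet \otimes \mathcal{E}_A)$ carries a canonical left $A$-module structure on $\mathbb{P}^2$; (b) the derived functors stay within bounded coherent complexes; and (c) on skyscrapers at closed points of $C$, the formula recovers the parametrized family. Steps (a) and (b) show that $\Phi_{\mathcal{E}_A}$ lands in $D^b(\mathbb{P}^2, A)$; step (c) is a base-change computation.

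For the module structure, I would observe that $\mathcal{E}_A$ is by construction a sheaf of left $A_C = q^*A$-modules, while $p^*\mathcal{F}^\bullet$ is just a complex of $\mathcal{O}_{C\times \mathbb{P}^2}$-modules. Hence $p^*\mathcal{F}^\bullet \otimes \mathcal{E}_A$ inherits a left $q^*A$-action from the second factor. Applying $q_*$, the projection formula $q_*(q^*A \otimes (-)) \simeq A \otimes q_*(-)$ (equivalently, the adjunction unit $A \to q_*q^*A$) turns this $q^*A$-action into an $A$-action on the pushforward. Thus $\Phi_{\mathcal{E}_A}(\mathcal{F}^\bullet)$ is naturally an object of $D(\mathrm{QCoh}(\mathbb{P}^2, A))$.

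For boundedness, I would use that $p$ is a smooth projection, so $p^*$ is exact and preserves bounded coherent complexes, and that $\mathcal{E}_A$ is flat over $C$ (as a flat family of generically simple torsion free $A$-modules, cf.~Definition \ref{Definition of moduli func}) with fibers that are $A$-line bundles; in particular each fiber is locally free of rank $4$ as an $\mathcal{O}_{\mathbb{P}^2}$-module (Lemma \ref{module on Y} plus $\pi_*$ of a rank two locally free sheaf on $Y$). Combining fiberwise local freeness with flatness over $C$ shows $\mathcal{E}_A$ is locally free as an $\mathcal{O}_{C \times \mathbb{P}^2}$-module, so the derived tensor product $p^*\mathcal{F}^\bullet \otimes^L \mathcal{E}_A$ is bounded with coherent cohomology. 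Since $q$ is proper, $Rq_*$ of such a complex is again bounded with coherent cohomology, so the target really is $D^b(\mathbb{P}^2, A)$.

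For the skyscraper computation, let $i_t : \{t\} \times \mathbb{P}^2 \hookrightarrow C \times \mathbb{P}^2$ be the inclusion of the fiber over $t \in C$. Flatness of $p$ gives $p^*\mathcal{O}_t \simeq i_{t*}\mathcal{O}_{\{t\}\times \mathbb{P}^2}$, and flatness of $\mathcal{E}_A$ over $C$ guarantees that the derived restriction coincides with the ordinary restriction, so
$$
p^*\mathcal{O}_t \otimes^L \mathcal{E}_A \;\simeq\; i_{t*}\bigl(\mathcal{E}_A|_{\{t\} \times \mathbb{P}^2}\bigr) \;\simeq\; i_{t*}\mathcal{E}_t
$$
by the projection formula. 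Since $q \circ i_t : \{t\} \times \mathbb{P}^2 \xrightarrow{\sim} \mathbb{P}^2$ is an isomorphism, applying $q_*$ yields $\Phi_{\mathcal{E}_A}(\mathcal{O}_t) \simeq \mathcal{E}_t$, and the identification is an isomorphism of $A$-modules by naturality. The only delicate point — and the main thing to check carefully — is that all the derived/base-change identifications (projection formula, flat base change) are compatible with the $A_C$-module structure, so that the final isomorphism is one of $A$-modules, not merely of $\mathcal{O}_{\mathbb{P}^2}$-modules; this follows from the $\mathcal{O}_{\mathbb{P}^2}$-linearity of the $A$-action.
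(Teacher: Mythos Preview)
Your argument is correct and follows essentially the same route as the paper's proof: the paper also derives the $A$-module structure from the $A_C$-action on $\mathcal{E}_A$, observes that $\mathcal{E}_A$ is locally free of rank $4$ on $C\times\mathbb{P}^2$ (since each fiber is an $A$-line bundle), and then invokes ``standard computations'' for $\Phi_{\mathcal{E}_A}(\mathcal{O}_t)\simeq\mathcal{E}_t$. You have simply unpacked those standard computations (base change, projection formula, compatibility with the $A$-action) more carefully than the paper does.
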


\begin{proof}
  Let $\mathcal{F}^{\bullet}\in D^{b}(C)$. Since $\mathcal{E}_{A}$ is coherent over $C\times \mathbb{P}^{2}$, as a complex of $\mathcal{O}_{\mathbb{P}^{2}}$-modules, $\Phi_{\mathcal{E}_{A}}(\mathcal{F}^{\bullet})\in D^{b}(\mathbb{P}^{2})$. Since $\mathcal{E}_{A}$ is also a left $A_{C}$-module,  $\Phi_{\mathcal{E}_{A}}(\mathcal{F}^{\bullet})$ is  a complex of left $A$-modules.  Thus $\Phi_{\mathcal{E}_{A}}(\mathcal{F}^{\bullet})\in D^{b}(\mathbb{P}^{2}, A)$.   
     
     Since $\mathcal{E}_{A}$ is a universal family of $A$-line bundles and each $A$-line bundle is a locally free sheaf of rank 4 on $\mathbb{P}^{2}$, $\mathcal{E}_{A}$ is a locally free sheaf of rank 4 on $C\times\mathbb{P}^{2}$. Thus by standard computations, $ \Phi_{\mathcal{E}_{A}}(\mathcal{O}_{t})\simeq \mathcal{E}_{t}.  $ where ${\mathcal E}_t$ is the  $A$-line bundle corresponding to the point $t$.
     
     \end{proof}

     Our goal is to show $\Phi_{\mathcal{E}_{A}}$ is fully faithful. First, we want to prove the following important proposition:

 \begin{proposition}
     \label{The most important}

 Let $t, t_{0}$ and $t_{1}$ be closed points on $C$, where $C$ is the genus 2 curve as above. Then we have the following: 

\begin{enumerate}

\item    $\operatorname{Ext}_{A}^{i}(\mathcal{E}_{t_{0}}, \mathcal{E}_{t_{1}})=0$  for every $i$ and $t_{0}\ne t_{1}$; 

\item    $\operatorname{Hom}_{A}(\mathcal{E}_{t}, \mathcal{E}_{t})=k$;

\item   $\operatorname{Ext}_{A}^{i}(\mathcal{E}_{t}, \mathcal{E}_{t})=0$ for $i>1$. 
\end{enumerate}
    
 \end{proposition}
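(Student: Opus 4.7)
\emph{Strategy.} The three claims together form Bridgeland's criterion for $\Phi_{\mathcal{E}_A}$ to be fully faithful, so I verify them in stages: (ii) and the $i=0$ case of (i) from simplicity, the $\operatorname{Ext}^{\geq 3}$ vanishings from cohomological dimension, the $\operatorname{Ext}^2$ vanishings from Serre duality combined with a slope comparison, and finally $\operatorname{Ext}^1$ in (i) from an Euler-characteristic count.

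Part (ii) is immediate from Proposition \ref{Simple module}(ii), since every $\mathcal{E}_t$ is an $A$-line bundle and hence a generically simple torsion-free $A$-module. For (i) at $i=0$: by Proposition \ref{Simple module}(i) any nonzero morphism $\mathcal{E}_{t_0} \to \mathcal{E}_{t_1}$ is injective, and since the two sheaves share Chern classes by Proposition \ref{curve C} and therefore have identical Hilbert polynomials, such an injection must be an isomorphism, contradicting $t_0 \neq t_1$. For $i \geq 3$ (in both (i) and (iii)): each $\mathcal{E}_{t_i}$ is locally $A$-projective, so $\mathcal{E}xt^q_A(\mathcal{E}_{t_0},\mathcal{E}_{t_1}) = 0$ for $q>0$, and the local-to-global spectral sequence reduces $\operatorname{Ext}^i_A$ to $H^i\!\bigl(\mathbb{P}^2,\mathcal{H}om_A(\mathcal{E}_{t_0},\mathcal{E}_{t_1})\bigr)$, which vanishes for $i>\dim \mathbb{P}^2 = 2$.

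The crucial step is the $\operatorname{Ext}^2$-vanishing. Serre duality gives
\[
\operatorname{Ext}^2_A(\mathcal{E}_{t_0}, \mathcal{E}_{t_1}) \cong \operatorname{Hom}_A(\mathcal{E}_{t_1}, \omega_A \otimes_A \mathcal{E}_{t_0})^{\ast},
\]
and the analogous formula with $t_0 = t_1$ covers (iii). I would compute $\omega_A = \mathcal{H}om_{\mathcal{O}_{\mathbb{P}^2}}(A,\omega_{\mathbb{P}^2})$ from the presentation $A = \pi_{\ast}(\mathcal{O}_Y \oplus \mathcal{O}_Y(E-E')_{\sigma})$ using Grothendieck duality for the double cover $\pi\colon Y \to \mathbb{P}^2$ together with $\omega_{\mathbb{P}^2} = \mathcal{O}(-3)$. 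Since the half-ramification of $A$ along the quartic $R$ contributes a shift by $+2H$, morally $\omega_A$ twists by $\mathcal{O}(-H)$; the goal is to upgrade this into a precise statement showing that the rank-$2$ bundle on $Y$ underlying $\omega_A \otimes_A \mathcal{E}_{t_0}$ has strictly smaller $\mu_H$-slope than $\mathcal{E}_{t_1}$. Combined with the $\mu_H$-semistability of both sides (Lemma \ref{semistable}), this forces $\operatorname{Hom}_{\mathcal{O}_Y}$, and a fortiori $\operatorname{Hom}_A$, to vanish.

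For (i) at $i=1$, I use the Euler form: $\chi_A(M,N) = \sum_i (-1)^i \dim \operatorname{Ext}^i_A(M,N)$ depends only on the Chern classes of $M$ and $N$, so $\chi_A(\mathcal{E}_{t_0}, \mathcal{E}_{t_1}) = \chi_A(\mathcal{E}_t, \mathcal{E}_t) = 1 - \dim \operatorname{Ext}^1_A(\mathcal{E}_t,\mathcal{E}_t)$ by (ii) and the $\operatorname{Ext}^2$-vanishing just established. Identifying $\operatorname{Ext}^1_A(\mathcal{E}_t, \mathcal{E}_t)$ with the tangent space $T_t C$, which is one-dimensional because $C$ is a smooth curve (Proposition \ref{curve C}), yields $\chi_A = 0$, and then the $i=0,2$ vanishings for $t_0 \neq t_1$ force $\operatorname{Ext}^1_A(\mathcal{E}_{t_0},\mathcal{E}_{t_1}) = 0$. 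The main obstacle is the explicit slope computation: one must pin down $\omega_A$ precisely enough to certify the strict slope inequality and to verify that $\omega_A \otimes_A \mathcal{E}_{t_0}$ remains torsion-free and semistable on $Y$. The isomorphism $A \cong A^{\circ}$ of Proposition \ref{left and right} should help keep the bimodule bookkeeping manageable.
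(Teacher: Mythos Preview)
Your outline is correct and, for the hard part, genuinely more efficient than the paper's argument. The paper handles $\operatorname{Ext}^2_A=0$ by first proving $\operatorname{Ext}^2_Y(\mathcal{E}_{t_0},\mathcal{E}_{t_1})=0$ through a chain of explicit cohomology computations on $Y$ (Lemmas~\ref{H2}--\ref{EXT0}) and then invoking the splitting $\operatorname{Ext}^i_Y=\operatorname{Ext}^i_A\oplus\operatorname{Ext}^i_A(-,Au\otimes_A-)$ coming from $B=\mathcal{E}nd_Y(A)$. For $\operatorname{Ext}^1_A(\mathcal{E}_{t_0},\mathcal{E}_{t_1})=0$ with $t_0\neq t_1$ it then carries out a four-case analysis (Lemma~\ref{EXTA1}) that uses the explicit description of $C$ as a double cover of $\mathbb{P}^1$, distinguishing ramification points from generic points and treating the involution $\delta$ separately. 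Your route bypasses all of this: the formula $\omega_A\cong A\otimes_Y\mathcal{O}_Y(-H)$ (which the paper records later as Proposition~\ref{canonical}, citing \cite[Proposition~4.1]{chan2011moduli}) gives $\omega_A\otimes_A\mathcal{E}_{t_0}\cong\mathcal{E}_{t_0}(-H)$ on $Y$, and the strict slope drop together with Lemma~\ref{semistable} kills $\operatorname{Hom}$ and hence $\operatorname{Ext}^2_A$ uniformly. Your Euler-characteristic step then finishes (i) at $i=1$ in one line. Two small points to tighten: first, the assertion that $\chi_A$ ``depends only on Chern classes'' is not immediate for $A$-modules---what you actually need, and what is easy, is that $\chi\bigl(\mathbb{P}^2,\mathcal{H}om_A(\mathcal{E}_{t_0},\mathcal{E}_{t_1})\bigr)$ is locally constant as $(t_0,t_1)$ ranges over the connected base $C\times C$, since the relative $\mathcal{H}om_A$ is flat; second, the identification $\operatorname{Ext}^1_A(\mathcal{E}_t,\mathcal{E}_t)\cong T_tC$ needs the fine-moduli statement (Theorem~\ref{universal on curve}) together with \cite[Lemma~3.1]{hoffmann2005moduli}, exactly as the paper invokes it inside Case~(ii) of Lemma~\ref{EXTA1}. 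What your approach buys is a uniform, structure-free argument; what the paper's approach buys is an explicit picture of how the $B$-module decomposition and the hyperelliptic involution on $C$ interact, which is of independent interest but not needed for Proposition~\ref{The most important} itself.
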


 \begin{proof}
     
By Proposition \ref{Simple module}, since an $A$-line bundle is a generically simple torsion free $A$-module,  $\operatorname{Hom}_{A}(\mathcal{E}_{t}, \mathcal{E}_{t})=k$. So this proves Case (ii) in the statement. 

By \cite[Proposition 4.1]{chan2011moduli}, $\operatorname{Ext}_{A}^{2}(\mathcal{E}_{t}, \mathcal{E}_{t})=0$. By Serre duality, $$\operatorname{Ext}_{A}^{i}(\mathcal{E}_{t_{1}}, \mathcal{E}_{t_{2}})=\operatorname{Ext}^{2-i}_{A}(\mathcal{E}_{t_{1}}, \omega_{A}\otimes_{A} \mathcal{E}_{t_{2}})^{*}=0,$$  where $t_{1}, t_{2}$ are arbitrary two points on $C$ and $i>2$. So this proves Case (iii) and Case (i) when $i>2$. 

Let $0 \neq \varphi \in \operatorname{Hom}_{A}(\mathcal{E}_{t_{0}}, \mathcal{E}_{t_{1}})$.  Since $\mathcal{E}_{t_{0}}, \mathcal{E}_{t_{1}}$ are $A$-line bundles, $\varphi$ is injective by Proposition \ref{Simple module}. Since $\mathcal{E}_{t_{0}}, \mathcal{E}_{t_{1}}$ have the same Chern classes, $\varphi $ is an isomorphism. So any nonzero morphism between $\mathcal{E}_{t_{0}}$ and $\mathcal{E}_{t_{1}}$ is an isomorphism. Since for $t_{0}\neq t_{1}$,   $\mathcal{E}_{t_{0}}\not \simeq \mathcal{E}_{t_{1}}$, we get that $\operatorname{Hom}_{A}(\mathcal{E}_{t_{0}}, \mathcal{E}_{t_{1}})=0$. So this proves Case (i) when $i=0$.

In order to finish the proof of proposition, we need to prove the following lemma. We return to complete the proof after collecting several necessary results.
 \end{proof}
We state the part of the Proposition above that is left unproved as a lemma.
\begin{lemma} \label{ prelim lemma}
    Let $t_{0}, t_{1}$ be two different points on $C$, then we have 
    $$\operatorname{Ext}_{A}^{i}(\mathcal{E}_{t_{0}}, \mathcal{E}_{t_{1}})=0 \ for \ i=1, 2.$$
\end{lemma}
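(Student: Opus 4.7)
The plan is to establish both vanishings in parallel, combining an Euler characteristic computation with a separate Serre duality argument.

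First I would compute $\chi_A(\mathcal{E}_{t_0}, \mathcal{E}_{t_1})$. Because $\mathcal{E}_{t_0}$ and $\mathcal{E}_{t_1}$ share the same rank and Chern classes on $Y$, Hirzebruch--Riemann--Roch yields
\[
\chi_A(\mathcal{E}_{t_0}, \mathcal{E}_{t_1}) = \chi_A(\mathcal{E}_t, \mathcal{E}_t)
\]
for any $t \in C$. On the diagonal, Proposition \ref{Simple module} gives $\dim \operatorname{Hom}_A(\mathcal{E}_t, \mathcal{E}_t) = 1$, Chan--Kulkarni (\cite[Proposition~4.1]{chan2011moduli}) gives $\operatorname{Ext}^2_A(\mathcal{E}_t, \mathcal{E}_t) = 0$, and the standard identification of the tangent space of the moduli with $\operatorname{Ext}^1_A(\mathcal{E}_t, \mathcal{E}_t)$, together with the smoothness of $C$ (Proposition \ref{curve C}), forces $\dim \operatorname{Ext}^1_A(\mathcal{E}_t, \mathcal{E}_t) = 1$. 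Hence $\chi_A(\mathcal{E}_{t_0}, \mathcal{E}_{t_1}) = 0$. Combined with the already-established $\operatorname{Hom}_A(\mathcal{E}_{t_0}, \mathcal{E}_{t_1}) = 0$ and $\operatorname{Ext}^i_A(\mathcal{E}_{t_0}, \mathcal{E}_{t_1}) = 0$ for $i > 2$, this reduces the Euler identity to $\dim \operatorname{Ext}^1_A(\mathcal{E}_{t_0}, \mathcal{E}_{t_1}) = \dim \operatorname{Ext}^2_A(\mathcal{E}_{t_0}, \mathcal{E}_{t_1})$, so it suffices to kill one of the two.

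Next, to handle $\operatorname{Ext}^2$, I would apply the Serre duality stated earlier for $A$-modules to obtain
\[
\operatorname{Ext}^2_A(\mathcal{E}_{t_0}, \mathcal{E}_{t_1}) \cong \operatorname{Hom}_A(\mathcal{E}_{t_1}, \omega_A \otimes_A \mathcal{E}_{t_0})^*.
\]
Any nonzero morphism $\mathcal{E}_{t_1} \to \omega_A \otimes_A \mathcal{E}_{t_0}$ is injective, because $\mathcal{E}_{t_1}$ is generically simple (Proposition \ref{Simple module}(i)). Using the description $A = \mathcal{O}_Y \oplus L_\sigma$ and the formula $\omega_A = \mathcal{H}\kern -.5pt om_{\mathcal{O}_{\mathbb{P}^{2}}}(A, \omega_{\mathbb{P}^{2}})$, I would compute the Chern classes of $\omega_A \otimes_A \mathcal{E}_{t_0}$ on $Y$ and compare them to those of $\mathcal{E}_{t_1}$. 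If the classes differ, the slope-semistability of $\mathcal{E}_{t_1}$ (Lemma \ref{semistable}) immediately rules out a nonzero map; if instead they coincide, the injection is forced to be an isomorphism, and a contradiction is extracted from the diagonal vanishing $\operatorname{Hom}_A(\mathcal{E}_t, \omega_A \otimes_A \mathcal{E}_t) = 0$ (which is the $t_0 = t_1$ version of what we are proving, i.e.~the Serre-dual reformulation of $\operatorname{Ext}^2_A(\mathcal{E}_t, \mathcal{E}_t) = 0$).

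Once $\operatorname{Ext}^2_A(\mathcal{E}_{t_0}, \mathcal{E}_{t_1}) = 0$ is in hand, the equality of dimensions from the first step gives $\operatorname{Ext}^1_A(\mathcal{E}_{t_0}, \mathcal{E}_{t_1}) = 0$ for free. The main obstacle is the middle step: pinning down $\omega_A \otimes_A \mathcal{E}_{t_0}$ sharply enough — either showing its numerical invariants lie outside the family parametrized by $C$ so that semistability applies directly, or showing the induced self-map $t_0 \mapsto [\omega_A \otimes_A \mathcal{E}_{t_0}]$ on moduli has no fixed points and then tracking how this constrains the possible $t_1$. The calculation of $\omega_A \otimes_A (-)$ requires care because $\omega_A$ is a genuine $A$-bimodule rather than a line bundle, but once its effect on $(c_1, c_2)$ is recorded explicitly, the argument becomes routine.
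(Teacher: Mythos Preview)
Your strategy is genuinely different from the paper's and, once the loose end you flag is tied up, considerably shorter. The paper establishes $\operatorname{Ext}^2_A=0$ indirectly: it first proves $\operatorname{Ext}^2_Y(\mathcal{E}_{t_0},\mathcal{E}_{t_1})=0$ through several explicit Riemann--Roch and linear-system computations on $Y$ (Lemmas~\ref{H2}--\ref{EXT0}), then introduces the auxiliary algebra $B=\mathcal{E}nd_Y(A)$ to obtain $\operatorname{ext}^i_A\le\operatorname{ext}^i_Y$ (Corollary~\ref{EXTA2=0}). For $\operatorname{Ext}^1_A$ the paper does \emph{not} use an Euler-characteristic argument at all; instead it runs a four-case analysis (Lemma~\ref{EXTA1}) that depends on the explicit description of the hyperelliptic involution $\delta$ on $C$, the six Weierstrass points, and the identification $Au\otimes_A\mathcal{E}_{t}\simeq\mathcal{E}_{\delta(t)}$. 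Your Serre-duality-plus-$\chi$ route bypasses both the $B$-module trick and the entire case split, at the price of needing $\omega_A$ explicitly.

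Two points need sharpening. First, Hirzebruch--Riemann--Roch on $Y$ computes $\chi_Y$, not $\chi_A$; what you actually want is that $\chi_A(\mathcal{E}_{t_0},\mathcal{E}_{t_1})$ is locally constant on the connected family $C\times C$. This follows because each $\mathcal{E}_t$ is locally projective over $A$, so $\mathcal{H}om_A(\mathcal{E}_{t_0},\mathcal{E}_A)$ is flat over $C$ and the usual constancy of Euler characteristics applies. Second, the step you call ``the main obstacle'' dissolves once you use $\omega_A\cong A\otimes_Y\mathcal{O}_Y(-H)$ as $A$-bimodules (\cite[Proposition~4.1]{chan2011moduli}, recorded in the paper as Proposition~\ref{canonical}). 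Since $\mathcal{O}_Y(-H)=\pi^*\mathcal{O}_{\mathbb{P}^2}(-1)$ is central, $\omega_A\otimes_A\mathcal{E}_{t_0}\cong\mathcal{E}_{t_0}\otimes_Y\mathcal{O}_Y(-H)$, which has $H$-slope $-1<1=\mu_H(\mathcal{E}_{t_1})$. Both sheaves are $\mu_H$-semistable (Lemma~\ref{semistable}, stable under line-bundle twist), so $\operatorname{Hom}_A(\mathcal{E}_{t_1},\omega_A\otimes_A\mathcal{E}_{t_0})=0$ immediately; your speculative ``classes coincide'' branch and the self-map on moduli never enter.
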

In order to prove \ref{ prelim lemma},  we first show that $\operatorname{Ext}_{Y}^{i}(\mathcal{E}_{t_{0}}, \mathcal{E}_{t_{1}})=0$ and then prove that 
\[
\operatorname{dim}_{k}\operatorname{Ext}_{A}^{i}(\mathcal{E}_{t_{0}}, \mathcal{E}_{t_{1}}) \leq \operatorname{dim}_{k}\operatorname{Ext}_{Y}^{i}(\mathcal{E}_{t_{0}}, \mathcal{E}_{t_{1}}).
\] We need a few preliminary lemmas to proceed.

\begin{lemma} \label{HH}
Recall the notation $H=\pi^{-1}(l)$, where $l$ is a line on $\mathbb{P}^{2}$. Here $\pi: Y \rightarrow \mathbb{P}^2$ is the double cover ramified on $R$. Let $E$ be any (-1)-curve on $Y$, then $H\cdot H=2$ and $H\cdot E=1$
    
\end{lemma}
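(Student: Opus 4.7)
The plan is to use intersection theory on $Y$, exploiting the fact that $Y$ is a del Pezzo surface of degree $2$. The key identification is $K_Y = -H$, which turns the whole statement into a routine adjunction computation.

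First I would compute $H^{2}$. Since $H = \pi^{*}(l)$ and $\pi$ is a finite morphism of degree $2$, the projection formula gives
\[
H \cdot H \;=\; \pi^{*}(l)\cdot \pi^{*}(l) \;=\; (\deg \pi)\, (l\cdot l) \;=\; 2\cdot 1 \;=\; 2.
\]

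Next I would establish $K_{Y} = -H$. The double cover formula for $\pi: Y \to \mathbb{P}^{2}$ branched along the smooth quartic $R$ yields
\[
K_{Y} \;=\; \pi^{*}\!\left(K_{\mathbb{P}^{2}} + \tfrac{1}{2}R\right) \;=\; \pi^{*}(-3l + 2l) \;=\; -\pi^{*}(l) \;=\; -H.
\]
Now let $E$ be any $(-1)$-curve on $Y$. Then $E \cong \mathbb{P}^{1}$ and $E^{2} = -1$, so the adjunction formula gives
\[
-2 \;=\; 2g(E) - 2 \;=\; E^{2} + E\cdot K_{Y} \;=\; -1 + E\cdot K_{Y},
\]
hence $E\cdot K_{Y} = -1$. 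Combining with $K_{Y} = -H$ yields $E\cdot H = 1$, as claimed.

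There is no serious obstacle; the only point requiring a bit of care is invoking the correct double cover formula, $K_{Y} = \pi^{*}(K_{\mathbb{P}^{2}} + \tfrac{1}{2}R)$, to get the clean identification $K_{Y} = -H$. Once this is in hand, both intersection numbers follow uniformly for every one of the $56$ exceptional curves, without having to split into the four families $E_{i}, L_{ij}, C_{ij}, D_{i}$ and compute each case separately in the basis coming from the blow-up $\phi: Y \to \mathbb{P}^{2}$.
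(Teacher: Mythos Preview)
Your proof is correct, but it takes a different route from the paper. For $H\cdot H=2$ both arguments are essentially the same (projection formula for the degree-$2$ cover). For $H\cdot E=1$, the paper instead uses the earlier observation that the $56$ exceptional curves come in $28$ pairs $(E,\sigma(E))$ lying over the bitangents, so that $H\sim E+\sigma(E)$; since $\sigma^{*}H=H$, one has $H\cdot E=H\cdot\sigma(E)$, and then $2=H\cdot H=H\cdot(E+\sigma(E))=2(H\cdot E)$ gives $H\cdot E=1$. Your approach via $K_{Y}=-H$ and adjunction is equally short and has the advantage of not depending on the bitangent description; conversely, the paper's argument avoids invoking the Hurwitz-type formula for $K_{Y}$ and stays entirely within the intersection-theoretic facts already set up in Section~\ref{section 2}.
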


\begin{proof}
  Since $\pi$ is a finite map of degree two, $H\cdot H=2$. By the descriptions of (-1)-curves in Section \ref{section 2},  $H\sim E+\sigma(E)$. Hence $H\cdot E= H\cdot \sigma(E)=1$.
\end{proof}

We fix the following notations:
\[ \operatorname{ext}^{i}:=\operatorname{dim}_{k}\operatorname{Ext}^{i}, h^{i}:=\operatorname{dim}_{k}H^{i}.
\]
In the following, we always assume that all $A$-modules are $A$-line bundles with Chern classes  $c_{1}=E+\sigma E', c_{2}=1$. Here $E = E_{1}$ and $\sigma(E') = L_{12}$.

\begin{proposition}\label{Chi}

Let $M_{0}, M_{1}$ be two $A$-line bundles, then we have
    $$\chi(M_{0}, M_{1}):= \sum_i (-1)^{i}\operatorname{ext}^{i}_{{Y}}(M_{0}, M_{1})=0.$$

\end{proposition}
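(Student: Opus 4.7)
The plan is to apply Hirzebruch--Riemann--Roch on the surface $Y$. By Lemma~\ref{module on Y}, each $A$-line bundle $M_i$, viewed as an $\mathcal{O}_Y$-module, is locally free of rank $2$ with $c_1(M_i) = E + \sigma(E')$ and $c_2(M_i) = 1$. Since $M_0$ is locally free, we have
$$\chi(M_0, M_1) \;=\; \chi(M_0^{\vee} \otimes M_1) \;=\; \int_Y \operatorname{ch}(M_0^{\vee}) \, \operatorname{ch}(M_1) \, \operatorname{td}(Y),$$
so the entire claim reduces to a single intersection number on $Y$.

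First I would pin down the invariants of $Y$. Since $\pi \colon Y \to \mathbb{P}^2$ is the double cover ramified along the smooth quartic $R$, the ramification formula gives $K_Y = \pi^{*}(K_{\mathbb{P}^2} + \tfrac{1}{2}R) = -H$, and $Y$ is a del Pezzo surface of degree $2$ with $\chi(\mathcal{O}_Y) = 1$. Hence $\operatorname{td}(Y) = 1 + \tfrac{1}{2}H + [\mathrm{pt}]$. Setting $c := c_1(M_i)$, a direct expansion of Chern characters shows that the degree-$1$ component of $\operatorname{ch}(M_0^{\vee}) \operatorname{ch}(M_1)$ vanishes (the $\pm c$ contributions cancel), so intersecting with $\operatorname{td}(Y)$ leaves only degree-$2$ terms and the integral collapses to $\chi(M_0, M_1) = c^{2}$.

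It therefore suffices to verify $c^{2} = (E + \sigma E')^{2} = 0$. Under the normalization $E = E_1$ and $\sigma(E') = L_{12} = L - E_1 - E_2$ fixed at the beginning of Section~\ref{section 4}, the blow-up description of $Y$ together with Lemma~\ref{HH} gives $E^{2} = \sigma(E')^{2} = -1$ and $E \cdot \sigma(E') = L_{12} \cdot E_1 = 1$, whence $c^{2} = -1 + 2 - 1 = 0$. The only step that is not entirely formal is this last intersection computation; everything else is routine bookkeeping with Chern classes, so I do not anticipate any substantive obstacle.
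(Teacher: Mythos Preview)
Your proof is correct and follows essentially the same route as the paper: apply Hirzebruch--Riemann--Roch on $Y$, expand $\operatorname{ch}(M_0^{\vee})\operatorname{ch}(M_1)\operatorname{td}(Y)$, and reduce the whole computation to the vanishing of $(E+\sigma E')^{2}$. The only cosmetic difference is that you first isolate the identity $\chi(M_0,M_1)=c_1^{2}$ and then verify $c_1^{2}=0$, whereas the paper plugs in $c_1^{2}=0$ at the outset and tracks the numerical Chern characters directly.
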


\begin{proof} By the Hirzebruch-Riemann-Roch formula, we have 
$$\chi(M_{0}, M_{1})=\chi(\mathcal{O}_{Y}, M_{0}^{*}\otimes M_{1})=\int_{Y}\operatorname{ch}(M_{0}^{*}\otimes M_{1})\operatorname{td}(Y)=\int_{Y}\operatorname{ch}(M_{0}^{*})\operatorname{ch} (M_{1})\operatorname{td}(Y).$$
Here ch denotes the Chern character. Since $c_{1}(M_{0})=c_{1}(M_{1})=E+\sigma E', c_{1}(M_{0})^{2}=(E+\sigma E')^{2}=0.$ Since $ c_{2}(M_{0})=c_{2}(M_{1})=1$, we have 
$$\operatornamewithlimits{ch}(M_{1})=2+[E+\sigma E']+[-1], \ \operatornamewithlimits{ch}(M_{0}^{*})=2-[E+\sigma E']+[-1].$$
Hence $\operatorname{ch}(M_{0}^{*})\operatorname{ch}(M_{1})=4+[0]+[-4].$ 
Since $Y$ is blow-up of $\mathbb{P}^{2}$ at 7 points , $Y$ is rational. By \cite[Proposition III.20]{beauville1996complex}, $$h^{1}(Y,\mathcal{O}_{Y})=h^{1}(\mathbb{P}^{2}, \mathcal{O}_{\mathbb{P}^{2}})=0,\ \text{and} \ h^{2}(Y,\mathcal{O}_{Y})=h^{2}(\mathbb{P}^{2}, \mathcal{O}_{\mathbb{P}^{2}})=0.$$
So we have $$\chi(Y, \mathcal{O}_{Y})=h^{0}(Y, \mathcal{O}_{Y})-h^{1}(Y, \mathcal{O}_{Y})+h^{2}(Y, \mathcal{O}_{Y})=1, \operatorname{td}(Y)=1-[\frac{1}{2}\omega_{Y}]+[1].$$
Hence
$$\operatorname{ch}(M_{0}^{*})\operatorname{ch} (M_{1})\operatorname{td}(Y)=4-[2\omega_{Y}]+[0], $$ and so $\chi(M_{0}, M_{1})=0.$
This completes the proof.
\end{proof}

In order to get more information about cohomology of $A$-line bundles, we review the construction of $A$-line bundles.

 \begin{lemma} [{\cite[Proposition 6.10]{chan2011moduli}}] \label{CK}     Assume $M$ is an $A$-line bundle, then we have an exact sequence
     \begin{equation}\label{structure of M}
         0\longrightarrow \mathcal{O}_{Y} \longrightarrow M \longrightarrow I_{p}\mathcal{O}_{Y} (E+\sigma E') \longrightarrow 0
     \end{equation}
     for some $p\in Y$. Here $I_{p}$ is the ideal sheaf of $p$ and $E+\sigma E'=E- E' +H$.
 \end{lemma}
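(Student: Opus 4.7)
The plan is to apply the Serre construction for rank-two bundles on a surface: I will produce a nonzero section $s : \mathcal{O}_Y \to M$ whose scheme-theoretic zero locus is $0$-dimensional, so that the Koszul-type exact sequence
\[
0 \longrightarrow \mathcal{O}_Y \xrightarrow{\,s\,} M \longrightarrow I_Z \otimes \det(M) \longrightarrow 0
\]
gives the claim with $\det(M) = \mathcal{O}_Y(E+\sigma E')$ and $\ell(Z) = c_2(M) = 1$, so that $Z = \{p\}$ for a single reduced point $p \in Y$.

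To produce such a section I will first compute $\chi(Y,M)$ by Hirzebruch--Riemann--Roch. Using $E = E_1$, $\sigma E' = L_{12}$, and the transverse intersection $E_1 \cdot L_{12} = 1$, one finds $c_1(M)^2 = -1 + 2 - 1 = 0$; adjunction ($C\cdot K_Y = -1$ for any $(-1)$-curve $C$) gives $c_1(M)\cdot K_Y = -2$; together with $\chi(\mathcal{O}_Y)=1$ (since $Y$ is rational) the rank-two surface HRR formula yields $\chi(Y,M) = 2$. For the vanishing of $H^2$, note that $\pi : Y\to\mathbb{P}^2$ is finite, so $H = \pi^*\mathcal{O}_{\mathbb{P}^2}(1)$ is ample, and $M$ is $\mu_H$-semistable by Lemma \ref{semistable}; a short computation using $K_Y = -3L + \sum_i E_i$ gives $K_Y\cdot H = -2$, so $M^\vee\otimes\omega_Y$ is $\mu_H$-semistable of slope $-\mu_H(M) + K_Y\cdot H = -3 < 0$, and hence $H^0(Y,M^\vee\otimes\omega_Y)=0$. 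By Serre duality $H^2(Y,M)=0$, so $h^0(Y,M) \geq \chi(Y,M) = 2$.

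It then remains to find a section whose zero locus has codimension two. For any nonzero $s \in H^0(Y,M)$, let $D$ denote the divisorial part of its zero scheme, so that $s$ factors through a saturated inclusion $\mathcal{O}_Y(D) \hookrightarrow M$ with $D$ effective; then $\mu_H$-semistability forces $D\cdot H \leq \mu_H(M) = 1$. Since $H$ is ample, either $D = 0$, in which case the quotient is torsion-free of rank one with determinant $\det(M)$ and second Chern class $c_2(M)=1$, yielding exactly the desired sequence; or $D$ is an irreducible $(-1)$-curve, in which case the induced sequence $0\to \mathcal{O}_Y(D)\to M\to I_{Z'}\otimes\mathcal{O}_Y(E+\sigma E'-D)\to 0$ combined with $c_2(M)=1$ forces the restrictive identity $D\cdot(E+\sigma E') = -\ell(Z')\leq 0$. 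Using the explicit enumeration of $(-1)$-curves on $Y$ from Lemma \ref{des blow} together with $h^0(Y,M)\geq 2$, I can show that a generic $s$ has $D = 0$, producing the desired sequence.

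The main obstacle is this last paragraph: controlling the divisorial zero loci of sections. The semistability bound $D\cdot H\leq 1$ is sharply attained by each of the $56$ exceptional $(-1)$-curves on $Y$, so the case analysis is delicate; what makes it tractable is the extreme smallness of $c_2(M)=1$ together with the lower bound $h^0(Y,M)\geq 2$, which together prevent any single $(-1)$-curve from being contained in the zero locus of every section of $M$.
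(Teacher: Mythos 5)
The paper offers no internal proof of this lemma; it is imported verbatim from \cite[Proposition 6.10]{chan2011moduli}, so your write-up can only be judged on its own terms. Your Serre-construction strategy is the natural one, and the quantitative skeleton is correct: $c_1(M)^2=(E_1+L_{12})^2=0$, $c_1(M)\cdot K_Y=-2$, hence $\chi(Y,M)=2$ by Riemann--Roch; the vanishing $H^2(Y,M)=H^0(Y,M^{\vee}\otimes\omega_Y)^{*}=0$ via semistability of a bundle of slope $-3$ is sound (and is genuinely needed here, since the paper's own Lemma \ref{H2} is downstream of the present lemma); and the dichotomy that the divisorial part $D$ of the zero scheme of a section is either $0$ or a single $(-1)$-curve with $D\cdot c_1(M)=-\ell(Z')\le 0$ does follow from $D\cdot H\le\mu_H(M)=1$ together with the Hodge index theorem and adjunction (a computation you should spell out rather than assert).

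The genuine gap is exactly where you locate it: the claim that a generic section has $D=0$ is the entire content of the lemma, and ``I can show'' is not a proof. The gap is fillable, and more cheaply than the curve-by-curve enumeration you anticipate. There are finitely many $(-1)$-curves $D$ with $D\cdot c_1(M)\le 0$, and the sections whose zero divisor contains a fixed $D$ form the linear subspace $H^0(Y,M(-D))\subseteq H^0(Y,M)$. A $k$-vector space is not a finite union of proper subspaces, so if no section vanished in codimension two there would be one $D$ with $h^0(M(-D))=h^0(M)\ge 2$. But $M(-D)$ is $\mu_H$-semistable of slope $0$, so every nonzero section of it vanishes in codimension two; two independent sections are then either generically independent, producing a nonzero section of $\det(M)\otimes\mathcal{O}_Y(-2D)$ --- impossible since $(c_1(M)-2D)\cdot H=0$ while $c_1(M)\ne 2D$ (compare $c_1(M)^2=0$ with $(2D)^2=-4$) --- or they span a rank-one subsheaf whose saturation $\mathcal{O}_Y(D'')$ has $h^0\ge 2$, forcing $D''$ effective and nonzero, hence $D''\cdot H>0=\mu_H(M(-D))$, contradicting semistability. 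Note that some genericity argument of this kind is unavoidable: for the split bundles $\mathcal{E}_{c_i}\simeq\mathcal{O}_Y(E)\oplus\mathcal{O}_Y(E'')$ the space $H^0$ is two-dimensional and the sections with divisorial zero locus really do fill two proper subspaces, with a general section vanishing exactly at the point $E\cap E''$. With this paragraph supplied, your proof is complete.
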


 Let $F:=E+ \sigma E'$. We also have the  exact sequence:
\begin{equation} \label{stucture of F }
0\longrightarrow I_{p}\mathcal{O}_{Y}(F)\longrightarrow \mathcal{O}_{Y}(F)\longrightarrow \mathcal{O}_{p} \longrightarrow 0 ,
\end{equation}

\noindent where $\mathcal{O}_{p}$ is the skyscraper sheaf at the point $p$.

For a divisor $D$ on $Y$, let $|D|$ be the set of all effective divisors on $Y$ which are linearly equivalent to $D$. We use the following lemma repeatedly.

\begin{lemma} [{\cite[Remark III.5]{beauville1996complex}}] \label{useful remark}    Let $D$ be an effective divisor, and $D'$ be an irreducible curve on $Y$ such that $D'^{2}\geq 0$. Then $D\cdot D'\geq 0$.
\end{lemma}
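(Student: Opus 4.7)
The plan is to decompose the effective divisor $D$ into its irreducible components and then handle the intersection with $D'$ componentwise, using the hypothesis $D'^2\geq 0$ only when a component happens to coincide with $D'$.

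First, since $D$ is effective, I would write $D=\sum_{i} a_{i} C_{i}$ where each $C_{i}$ is an irreducible curve on $Y$ and each multiplicity $a_{i}$ is a positive integer. By bilinearity of the intersection pairing,
\[
D\cdot D'=\sum_{i} a_{i}\,(C_{i}\cdot D').
\]
So it suffices to argue that each term $C_{i}\cdot D'$ is nonnegative.

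Next I would split the sum into two cases depending on whether $C_{i}$ equals $D'$ or not. If $C_{i}\neq D'$, then $C_{i}$ and $D'$ are two distinct irreducible curves on the smooth projective surface $Y$, so they meet properly (if at all), and the intersection number $C_{i}\cdot D'$ can be computed as a sum of local intersection multiplicities at the (finitely many) points of $C_{i}\cap D'$; each such local multiplicity is a nonnegative integer, and hence $C_{i}\cdot D'\geq 0$. If on the other hand $C_{i}=D'$, then $C_{i}\cdot D'=D'^{2}\geq 0$ by the hypothesis on $D'$. In either case the term $a_{i}(C_{i}\cdot D')$ is nonnegative, so the whole sum $D\cdot D'$ is nonnegative.

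The argument is essentially immediate once the decomposition into irreducibles is made; there is no genuine obstacle. The only subtle point worth flagging is the necessity of the hypothesis $D'^{2}\geq 0$: without it, the case $C_{i}=D'$ could contribute a negative term (for example, when $D'$ is a $(-1)$-curve appearing inside the support of $D$), and the conclusion would fail. This is why Beauville's statement insists on the nonnegativity of the self-intersection of the irreducible curve $D'$.
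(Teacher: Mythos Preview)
Your argument is correct and is precisely the standard proof; the paper itself does not supply a proof but merely cites Beauville, where the same decomposition-into-irreducibles argument is indicated.
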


\begin{lemma}\label{H2}
    For an $A$-line bundle $M$ with Chern classes as above, $H^{2}(Y, M)=0$.
\end{lemma}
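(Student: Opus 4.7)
The plan is to combine the two short exact sequences (\ref{structure of M}) and (\ref{stucture of F }) of Lemma \ref{CK} to reduce $H^2(Y,M)=0$ to a statement about $H^2(Y,\mathcal{O}_Y(F))$, and then use Serre duality together with the Del Pezzo structure of $Y$.

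First I would apply $H^\bullet(Y,-)$ to the defining sequence
\[
0\longrightarrow \mathcal{O}_Y\longrightarrow M\longrightarrow I_p\mathcal{O}_Y(F)\longrightarrow 0.
\]
Since $Y$ is rational, the paper already notes $H^2(Y,\mathcal{O}_Y)=0$ in the proof of Proposition \ref{Chi}, and $H^3(Y,\mathcal{O}_Y)=0$ because $\dim Y=2$. Thus the long exact sequence yields an isomorphism $H^2(Y,M)\cong H^2(Y,I_p\mathcal{O}_Y(F))$. Next I would feed the ideal sequence (\ref{stucture of F }) into cohomology: $H^1(Y,\mathcal{O}_p)=0$ since $\mathcal{O}_p$ is a skyscraper sheaf, so $H^2(Y,I_p\mathcal{O}_Y(F))\cong H^2(Y,\mathcal{O}_Y(F))$. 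It therefore suffices to prove $H^2(Y,\mathcal{O}_Y(F))=0$.

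For this I would invoke Serre duality, giving $H^2(Y,\mathcal{O}_Y(F))^{*}\cong H^0(Y,\mathcal{O}_Y(K_Y-F))$. Since $Y$ is the double cover of $\mathbb{P}^2$ branched along a smooth quartic, it is a Del Pezzo surface of degree two with $K_Y=-H$ (this follows from the standard formula $K_Y=\pi^{*}(K_{\mathbb{P}^2}+\tfrac12 R)=\pi^{*}(-l)=-H$). Hence I need to show $H^0(Y,\mathcal{O}_Y(-H-F))=0$, i.e.\ that $-H-F$ is not linearly equivalent to an effective divisor.

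The final step uses the ampleness of $H$ (it is the pullback of the hyperplane class by the finite map $\pi$). Using Lemma \ref{HH} and $F=E+\sigma E'$, I compute
\[
(-H-F)\cdot H=-H^{2}-E\cdot H-\sigma E'\cdot H=-2-1-1=-4<0.
\]
Any effective divisor would have non-negative intersection with the ample class $H$, so $-H-F$ cannot be effective. This yields $H^0(Y,\mathcal{O}_Y(-H-F))=0$, hence $H^2(Y,\mathcal{O}_Y(F))=0$, and backtracking through the two long exact sequences gives $H^2(Y,M)=0$. There is no serious obstacle here; the only nonroutine ingredient is the identification $K_Y=-H$, which is simply the Del Pezzo-of-degree-two description of $Y$ already implicit in the setup of Section \ref{section 2}.
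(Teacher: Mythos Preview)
Your proposal is correct and follows essentially the same route as the paper: both arguments reduce $H^{2}(Y,M)$ to $H^{2}(Y,\mathcal{O}_{Y}(F))$ via the two short exact sequences (\ref{structure of M}) and (\ref{stucture of F }), then kill $H^{2}(Y,\mathcal{O}_{Y}(F))$ by Serre duality together with the computation $H\cdot(-F-H)=-4$. The only cosmetic differences are that the paper treats $\mathcal{O}_{Y}(F)$ first and then chases the exact sequences, and that it invokes Lemma~\ref{useful remark} (with $H^{2}=2\geq 0$) rather than the ampleness of $H$ to conclude $|-F-H|=\emptyset$; your use of ampleness is equally valid and arguably cleaner.
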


\begin{proof}

By Serre duality, $$H^{2}(Y,\mathcal{O}_{Y}(F))=H^{0}(Y, \mathcal{O}_{Y}(-F)\otimes \omega_{Y})^{*}=H^{0}(Y, \mathcal{O}_{Y}(-F-H)),$$ where $\omega_{Y}\simeq \mathcal{O}_{Y}(-H)$ is the canonical bundle on $Y$.  Since 
$H\cdot(-F-H)=-4 \enspace \operatorname{and} \enspace H\cdot H=2$, we get $|-F-H|=\emptyset$ by Lemma \ref{useful remark}. Hence  $H^{2}(Y, \mathcal{O}_{Y}(F))=0$. 

By the short exact sequence \ref{stucture of F },  there is an exact sequence: $$H^{1}(Y, \mathcal{O}_{p})\longrightarrow H^{2}(Y, I_{p}\mathcal{O}_{Y}(F))\longrightarrow H^{2}(Y, \mathcal{O}_{Y}(F)).$$ 
Since $H^{1}(Y, \mathcal{O}_{p})=0$ and $H^{2}(Y, \mathcal{O}_{Y}(F))=0$, we  have that $H^{2}(Y, I_{p}\mathcal{O}_{Y}(F))=0$.  By the short exact sequence \ref{structure of M}, there is an exact sequence: $$H^{2}(Y,\mathcal{O}_{Y})\longrightarrow H^{2}(Y, M)\longrightarrow H^{2}(Y, I_{p}\mathcal{O}_{Y}(F)).$$ 
Since $H^{2}(Y,\mathcal{O}_{Y})=0$, we have $ H^{2}(Y, M)=0.$
\end{proof}

\begin{lemma}\label{Ex2}
For an $A$-line bundle $M$ as above and for any $p\in Y$, we have that 
   $$\operatorname{Ext}_{Y}^{2}(I_{p}\mathcal{O}_{Y}(F), M)=0.$$
\end{lemma}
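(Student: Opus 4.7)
The plan is to sandwich $\operatorname{Ext}^2_Y(I_p\mathcal{O}_Y(F), M)$ between two Ext groups that I can compute directly, and then reduce those to a pair of cohomology vanishings on $Y$. Applying $\operatorname{Hom}_Y(I_p\mathcal{O}_Y(F), -)$ to the short exact sequence of Lemma \ref{CK},
\[
0 \to \mathcal{O}_Y \to M \to I_q\mathcal{O}_Y(F) \to 0,
\]
with $q \in Y$ the point associated to $M$, yields the long exact sequence fragment
\[
\operatorname{Ext}^2_Y(I_p\mathcal{O}_Y(F),\, \mathcal{O}_Y) \longrightarrow \operatorname{Ext}^2_Y(I_p\mathcal{O}_Y(F),\, M) \longrightarrow \operatorname{Ext}^2_Y(I_p\mathcal{O}_Y(F),\, I_q\mathcal{O}_Y(F)),
\]
so it suffices to vanish both flanking groups.

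For each flanking term I apply $\operatorname{Hom}_Y(-, \cdot)$ to the ideal sheaf sequence $0 \to I_p\mathcal{O}_Y(F) \to \mathcal{O}_Y(F) \to \mathcal{O}_p \to 0$ to replace $I_p\mathcal{O}_Y(F)$ by $\mathcal{O}_Y(F)$, using that $\operatorname{Ext}^{\geq 3}_Y(\mathcal{O}_p, -) = 0$ by the cohomological dimension of the surface $Y$. After twisting by $\mathcal{O}_Y(-F)$ where appropriate, the two needed vanishings become $H^2(Y, \mathcal{O}_Y(-F)) = 0$ and $H^2(Y, I_q) = 0$. The second is immediate: the ideal sheaf sequence for $q$ combined with $H^1(\mathcal{O}_q) = 0$ and $H^2(\mathcal{O}_Y) = 0$ (the latter by rationality of $Y$, as used in Proposition \ref{Chi}) does the job.

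The substantive step is $H^2(Y, \mathcal{O}_Y(-F)) = 0$. By Serre duality this is dual to $H^0(Y, \mathcal{O}_Y(F - H))$, so I need to show $|F - H| = \emptyset$. Using $H \sim E' + \sigma E'$ (which follows from the relation $H \sim E + \sigma E$ for any exceptional curve $E$ noted in the proof of Lemma \ref{HH}), I obtain $F - H \sim E - E'$. Since $E$ and $E'$ are disjoint exceptional curves, $(E - E')^2 = -2$ and $(E - E') \cdot H = 0$ by Lemma \ref{HH}. Now $H$ is ample, being the pullback of an ample divisor by the finite morphism $\pi$, so any effective $D \in |F - H|$ must satisfy $D \cdot H = 0$ and hence $D = 0$; this contradicts $(F - H)^2 = -2$. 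Therefore $|F - H| = \emptyset$, which closes the argument. The only nontrivial input is this short numerical argument exploiting ampleness of $H$; the rest is standard diagram chasing.
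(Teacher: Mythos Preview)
Your proof is correct and follows essentially the same strategy as the paper: both reduce the statement, via the two short exact sequences \eqref{structure of M} and \eqref{stucture of F }, to the single vanishing $H^0(Y,\mathcal{O}_Y(F-H))=0$. You apply the sequences in the opposite order (first \eqref{structure of M} in the covariant variable, then \eqref{stucture of F } in the contravariant variable), whereas the paper first establishes $\operatorname{Ext}_Y^2(\mathcal{O}_Y(F),M)=0$ and then passes to $I_p\mathcal{O}_Y(F)$ via \eqref{stucture of F }; this is only a reordering. The one genuine variation is your argument for $|F-H|=\emptyset$: you observe $F-H\sim E-E'$, compute $(E-E')\cdot H=0$, and invoke ampleness of $H=\pi^*l$ to force any effective representative to be zero, contradicting $(E-E')^2=-2$; the paper instead intersects with the $\phi$-pullback $\phi^{-1}(l)$ of a general line and applies Lemma~\ref{useful remark}. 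Both arguments are short and valid.
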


\begin{proof}
Applying  the functor $\operatorname{Hom}_{Y}(\mathcal{O}_{Y}(F), \  .\  )$ to the short exact sequence \ref{structure of M}, we get the exact sequence
 $$\operatorname{Ext}_{Y}^{1}(\mathcal{O}_{Y}(F), \mathcal{O}_{p}) \longrightarrow\operatorname{Ext}_{Y}^{2}(\mathcal{O}_{Y}(F), I_{p}\mathcal{O}_{Y}(F))\longrightarrow\operatorname{Ext}_{Y}^{2}(\mathcal{O}_{Y}(F), \mathcal{O}_{Y}(F)).$$
Note that $$\operatorname{Ext}_{Y}^{1}(\mathcal{O}_{Y}(F), \mathcal{O}_{p})=H^{1}(\mathcal{O}_{Y}, \mathcal{O}_{p})=0, \ \text{and} \ \operatorname{Ext}_{Y}^{2}(\mathcal{O}_{Y}(F), \mathcal{O}_{Y}(F))=H^{2}(Y, \mathcal{O}_{Y})=0.$$ Thus we have $$\operatorname{Ext}_{Y}^{2}(\mathcal{O}_{Y}(F), I_{p}\mathcal{O}_{Y}(F))=0.$$
By Serre duality, $$\operatorname{Ext}_{Y}^{2}(\mathcal{O}_{Y}(F),\mathcal{O}_{Y})=H^{2}(Y,\mathcal{O}_{Y}(-F))=H^{0}(Y, \mathcal{O}_{Y}(F-H))^{*}.$$  Let $l$ be a line in $\mathbb{P}^{2}$ which does not pass through $p_{1}$, then $$\phi^{-1}(l)\cdot (F-H)=\phi^{-1}(l)\cdot E+\phi^{-1}(l)\cdot \sigma E'-\phi^{-1}(l)\cdot H=-2.$$ Since $\phi^{-1}(l)^{2}=1$, $|F-H|=\emptyset$. So we get that $\operatorname{Ext}_{Y}^{2}(\mathcal{O}_{Y}(F),\mathcal{O}_{Y})=0.$

Applying the functor $\operatorname{Hom}_{Y}(\mathcal{O}_{Y}(F), \  .\  )$ to the short exact sequence 
 \ref{structure of M} again, we get the exact sequence $$\operatorname{Ext}_{Y}^{2}(\mathcal{O}_{Y}(F),\mathcal{O}_{Y})\to \operatorname{Ext}_{Y}^{2}(\mathcal{O}_{Y}(F), M)\to \operatorname{Ext}_{Y}^{2}(\mathcal{O}_{Y}(F), I_{p}\mathcal{O}_{Y}(F)).$$
So we have $\operatorname{Ext}_{Y}^{2}(\mathcal{O}_{Y}(F),M)=0$. By applying the functor $\operatorname{Hom}_{Y}( \  .\ , M  )$,  we get an exact sequence $$\operatorname{Ext}_{Y}^{2}(\mathcal{O}_{Y}(F),M)\longrightarrow \operatorname{Ext}_{Y}^{2}(I_{p}\mathcal{O}_{Y}(F), M)\longrightarrow 0.$$ So we get $$\operatorname{Ext}_{Y}^{2}(I_{p}\mathcal{O}_{Y}(F), M)=0.$$
This completes the proof.
\end{proof}
\begin{lemma}\label{EXT0}
    For any two $A$ line bundles $M_{0}, M_{1}$, $\operatorname{Ext}_{Y}^{2}(M_{0}, M_{1})=0$
\end{lemma}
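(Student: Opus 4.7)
The plan is to reduce the statement to the two vanishing results already established, namely Lemma \ref{H2} ($H^2(Y, M) = 0$) and Lemma \ref{Ex2} ($\operatorname{Ext}_Y^2(I_p\mathcal{O}_Y(F), M) = 0$), by applying $\operatorname{Hom}_Y(-, M_1)$ to the standard presentation of $M_0$ supplied by Lemma \ref{CK}.

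Specifically, I would start by invoking Lemma \ref{CK} applied to $M_0$: there is a point $p \in Y$ and a short exact sequence
\[
0 \longrightarrow \mathcal{O}_Y \longrightarrow M_0 \longrightarrow I_p\mathcal{O}_Y(F) \longrightarrow 0.
\]
Applying the contravariant functor $\operatorname{Hom}_Y(-, M_1)$ yields a long exact sequence containing the fragment
\[
\operatorname{Ext}_Y^2(I_p\mathcal{O}_Y(F), M_1) \longrightarrow \operatorname{Ext}_Y^2(M_0, M_1) \longrightarrow \operatorname{Ext}_Y^2(\mathcal{O}_Y, M_1).
\]
The right-hand term equals $H^2(Y, M_1)$, which vanishes by Lemma \ref{H2}. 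The left-hand term vanishes by Lemma \ref{Ex2} (applied to the $A$-line bundle $M_1$, noting that the argument of Lemma \ref{Ex2} works uniformly for any $A$-line bundle in the target slot, not only when the source is built from the same point $p$). Sandwiched between two zeros, $\operatorname{Ext}_Y^2(M_0, M_1) = 0$, which completes the proof.

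There is essentially no obstacle: the only point worth double-checking is that Lemma \ref{Ex2} as stated holds with $M_1$ (rather than the specific $A$-line bundle whose presentation involves the point $p$) in the target. Inspecting that lemma, the input was only that $M_1$ is an $A$-line bundle with the fixed Chern classes and that it fits into a sequence of the shape \eqref{structure of M}; this is Lemma \ref{CK} again, so the hypothesis is satisfied. No further geometric input is needed.
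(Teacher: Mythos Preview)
Your proof is correct and follows the same approach as the paper: apply Lemma~\ref{CK} to $M_0$, take $\operatorname{Hom}_Y(-,M_1)$, and kill the two outer terms of the resulting $\operatorname{Ext}^2$ fragment using Lemmas~\ref{H2} and~\ref{Ex2}. Your extra sanity check that Lemma~\ref{Ex2} applies with an arbitrary $A$-line bundle $M_1$ in the target (and arbitrary $p$) is well-placed and is indeed how the lemma is stated.
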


\begin{proof}
    By Lemma \ref{CK}, $M_{0}$ sits in an exact sequence:
  $$ 0\longrightarrow \mathcal{O}_{Y} \longrightarrow M_{0} \longrightarrow I_{p_{0}}\mathcal{O}_{Y} (F) \longrightarrow 0 $$
for some $p_{0}$ in $Y$. Thus we have the exact sequence $$\operatorname{Ext}_{Y}^{2}(I_{p_{0}}\mathcal{O}_{Y}(F),M_{1})\longrightarrow \operatorname{Ext}_{Y}^{2}(M_{0}, M_{1})\longrightarrow \operatorname{Ext}_{Y}^{2}(\mathcal{O}_{Y}, M_{1}).$$
By Lemma \ref{H2} and Lemma \ref{Ex2}, $\operatorname{Ext}_{Y}^{2}(M_{0}, M_{1})=0$.    
\end{proof}

\begin{corollary} \label{Ext0=Ext1}
     For any two $A$-line bundles $M_{0}$ and $M_{1}$, we have $$ \operatorname{ext}_{Y}^{0}(M_{0}, M_{1})=\operatorname{ext}_{Y}^{1}(M_{0}, M_{1}).$$
\end{corollary}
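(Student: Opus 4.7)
The plan is to derive the corollary directly by combining Proposition \ref{Chi} with Lemma \ref{EXT0}. Since $Y$ is a smooth projective surface, for any two coherent sheaves the groups $\operatorname{Ext}_Y^i$ vanish for $i \geq 3$, so the Euler characteristic reduces to only three terms.

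Concretely, I would write
\begin{equation*}
\chi(M_0, M_1) = \operatorname{ext}_Y^0(M_0, M_1) - \operatorname{ext}_Y^1(M_0, M_1) + \operatorname{ext}_Y^2(M_0, M_1),
\end{equation*}
and then invoke Proposition \ref{Chi} to replace the left-hand side by $0$, and Lemma \ref{EXT0} to replace the last term on the right by $0$. Rearranging gives the desired equality $\operatorname{ext}_Y^0(M_0, M_1) = \operatorname{ext}_Y^1(M_0, M_1)$.

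There is no real obstacle here; the corollary is essentially a bookkeeping statement, with all the actual geometric input (the Chern-class calculation with Hirzebruch--Riemann--Roch, and the vanishing of $\operatorname{Ext}_Y^2$ obtained through the defining extensions of $A$-line bundles together with Serre duality on $Y$) already done in the preceding results.
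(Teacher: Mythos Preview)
Your argument is correct and matches the paper's own proof, which simply states that the corollary follows immediately from Proposition~\ref{Chi} and Lemma~\ref{EXT0}. Your extra remark that $\operatorname{Ext}_Y^i$ vanishes for $i\geq 3$ on the smooth surface $Y$ just makes explicit why the Euler characteristic has only three terms.
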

This follows immediately from Propositions \ref{Chi} and Lemma \ref{EXT0}. In order to prove Lemma \ref{ prelim lemma}, we need  a new notion. 

\begin{lemma} [{\cite[Lemma 2.2, Theorem 2.3]{chan2011moduli}}]\label{B-End}
Let $B: =\mathcal{E}nd_{Y}(A). $ Then we have

\begin{enumerate}

   \item  $ B\cong A[u; \sigma^{\vee}]/(u^{2}-1)$ which is naturally $(\mathbb{Z}/2)^{\vee}$-graded with graded decomposition 
$$B=A\oplus Au.$$
 In particular, $B$ is a flat left and right $A$-module.
\item There is a Morita equivalence between $B$ and $\mathcal{O}_{Y}$. If $M $ is an $A$-module, then $\mathcal{O}_{Y}$-module $\prescript{}{Y}{M}$ corresponds  the $B$-module $B\otimes_{A}M.$
 \end{enumerate}    
\end{lemma}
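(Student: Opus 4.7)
My plan is to realize $B$ as a smash product via Galois descent and then invoke classical Morita theory. For part (i), the summand decomposition $A = \mathcal{O}_Y \oplus L_\sigma$ is a $\mathbb{Z}/2$-grading, so the dual character group $(\mathbb{Z}/2)^\vee$ acts on $A$ by $\mathcal{O}_Y$-algebra automorphisms: the nontrivial character $\sigma^\vee$ acts as $+1$ on $\mathcal{O}_Y$ and as $-1$ on $L_\sigma$. Its fixed subalgebra is $A^{\sigma^\vee}=\mathcal{O}_Y$, and $A$ is $\mathcal{O}_Y$-free of rank two. Because $\psi: L_\sigma \otimes \sigma^*L_\sigma \to \mathcal{O}_Y$ is an isomorphism (by construction of the cyclic algebra), the extension $\mathcal{O}_Y \subseteq A$ is $(\mathbb{Z}/2)^\vee$-Galois, and the standard smash-product identification supplies an $\mathcal{O}_Y$-algebra isomorphism
\[
\Phi: A^\circ \rtimes (\mathbb{Z}/2)^\vee \;\xrightarrow{\ \sim\ }\; \mathcal{E}nd_{\mathcal{O}_Y}(A) = B, \qquad a \otimes (\sigma^\vee)^i \longmapsto \bigl(x\mapsto (\sigma^\vee)^i(x)\cdot a\bigr).
\]
Using the isomorphism $A \cong A^\circ$ from Proposition \ref{left and right}, this rewrites as $B \cong A[u;\sigma^\vee]/(u^2-1)$ with graded decomposition $A \oplus Au$; freeness of rank two over $A$ on both sides gives the asserted flatness.

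For part (ii), since $A$ is locally free of rank two over $\mathcal{O}_Y$ and $B = \mathcal{E}nd_{\mathcal{O}_Y}(A)$, the progenerator $A$ furnishes a classical Morita equivalence $B\text{-Mod} \simeq \mathcal{O}_Y\text{-Mod}$ via $N \longmapsto \mathcal{H}\kern -.5pt om_B(A,N)$. For a left $A$-module $M$, using the embedding $A \hookrightarrow B$ from part (i) and tensor-hom adjunction one gets
\[
\mathcal{H}\kern -.5pt om_B(A,\ B\otimes_A M) \;\cong\; \mathcal{H}\kern -.5pt om_A(A, M) \;=\; {}_Y M,
\]
so the $B$-module $B \otimes_A M$ corresponds to $_Y M$ under the Morita equivalence.

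The main obstacle lies in part (i): the Galois-descent isomorphism must be formulated carefully, since only right multiplication by $a \in A$ (and not left) is $\mathcal{O}_Y$-linear, which is why the natural construction has source $A^\circ \rtimes (\mathbb{Z}/2)^\vee$ rather than $A \rtimes (\mathbb{Z}/2)^\vee$. One must then verify that the identification $A \cong A^\circ$ from Proposition \ref{left and right} is equivariant for $\sigma^\vee$; a direct check, using that $\sigma^\vee$ preserves both the reduced trace and the grading, confirms this. Once these points are settled, the bijectivity of $\Phi$ can be verified locally on a trivialization of $L_\sigma$, where the invertibility of $\psi$ makes the relevant $2\times 2$ determinant a unit and both sides become $M_2(\mathcal{O}_Y)$.
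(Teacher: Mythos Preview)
The paper does not give its own proof of this lemma: it is quoted verbatim from \cite[Lemma~2.2, Theorem~2.3]{chan2011moduli} and no argument is supplied here. So there is nothing in the present paper to compare your proposal against.

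That said, your approach is sound and essentially the right picture. Viewing the $\mathbb{Z}/2$-grading on $A=\mathcal{O}_Y\oplus L_\sigma$ as a $(\mathbb{Z}/2)^\vee$-action with invariants $\mathcal{O}_Y$, and then invoking the smash-product description $A^{\circ}\#(\mathbb{Z}/2)^\vee\cong \mathcal{E}nd_{\mathcal{O}_Y}(A)$ for a Galois extension, is exactly how one expects part~(i) to go; your map $a\otimes(\sigma^\vee)^i\mapsto(x\mapsto(\sigma^\vee)^i(x)\cdot a)$ is indeed a well-defined algebra homomorphism into left-$\mathcal{O}_Y$-linear endomorphisms, and the local check you sketch (trivialize $L$, use invertibility of $\psi$) is the correct way to see bijectivity. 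The one point worth tightening is the Galois claim itself: you should state explicitly which Galois condition you mean (e.g.\ that the canonical map $A\otimes_{\mathcal{O}_Y}A\to A\otimes k[(\mathbb{Z}/2)^\vee]$ is an isomorphism) and note that this reduces, after the same local trivialization, to the invertibility of $\psi$. Your use of Proposition~\ref{left and right} to replace $A^\circ$ by $A$ is legitimate and not circular, since that proposition rests only on \cite{chan2012conic}; your equivariance remark is the right thing to check, and it holds because the standard involution preserves the grading. Part~(ii) is then classical Morita theory exactly as you say.
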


\begin{lemma}\label{ExtB}
    Let $M$ be an $A$-module, then there is a natural isomorphism of functors $\Psi^{i}: \operatorname{Ext}_{B}^{i}(B\otimes_{A}M,-)\cong \operatorname{Ext}_{A}^{i}(M,-).$
\end{lemma}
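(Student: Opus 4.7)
The plan is to deduce the statement from the tensor-hom adjunction between extension and restriction of scalars along the inclusion $A \subseteq B$, combined with the flatness of $B$ as a right $A$-module established in Lemma \ref{B-End}(i). For any left $A$-module $M$ and left $B$-module $N$, one has the natural adjunction
\[
\operatorname{Hom}_B\bigl(B \otimes_A M,\, N\bigr) \cong \operatorname{Hom}_A\bigl(M,\, \operatorname{Res}^B_A N\bigr),
\]
which sheafifies to quasi-coherent left $A$- and $B$-modules on $Y$. Here $\operatorname{Res}^B_A$ denotes restriction of scalars through $A \hookrightarrow B$. Since Lemma \ref{B-End}(i) gives $B \cong A \oplus Au$ as right $A$-modules, in particular $B$ is flat over $A$ on the right; hence the left adjoint $B \otimes_A -$ is exact, and by a standard categorical argument its right adjoint $\operatorname{Res}^B_A$ sends injective $B$-modules to injective $A$-modules.

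To promote this to an isomorphism of $\operatorname{Ext}$ groups, I would pick an injective resolution $N \to I^\bullet$ in the category of quasi-coherent left $B$-modules on $Y$; such resolutions exist by the same argument that gives enough injectives in $\operatorname{QCoh}(\mathbb{P}^2, A)$ recalled by the author at the start of Section \ref{section 4}, since $B$ is also locally free as an $\mathcal{O}_Y$-module. By the preceding paragraph, $\operatorname{Res}^B_A I^\bullet$ is then an injective resolution of $N$ viewed as an $A$-module. Applying the Hom-level adjunction termwise produces an isomorphism of complexes
\[
\operatorname{Hom}_B\bigl(B \otimes_A M,\, I^\bullet\bigr) \cong \operatorname{Hom}_A\bigl(M,\, \operatorname{Res}^B_A I^\bullet\bigr),
\]
and taking the $i$-th cohomology yields the desired natural isomorphism $\Psi^i$. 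Naturality in the target $N$ is inherited from the naturality of the adjunction together with the independence of derived functors from the choice of injective resolution.

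I do not foresee any real obstacle here: this is essentially a Shapiro-type change-of-rings statement, and the only ingredients are the tensor-hom adjunction and the flatness of $B$ over $A$, both of which are already in hand. The only mild subtlety is to check that the purely categorical machinery (existence of injective resolutions, and a right adjoint to an exact functor preserving injectives) applies in the sheaf-theoretic setting of modules over noncommutative $\mathcal{O}_Y$-algebras; this is standard and can be cited from the framework of \cite{kuznetsov2008derived} that is used throughout Section \ref{section 4}.
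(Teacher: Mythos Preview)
Your proof is correct and follows essentially the same approach as the paper: both use the tensor--hom adjunction together with the flatness of $B$ over $A$ (from Lemma \ref{B-End}) to conclude that restriction preserves injectives, and then derive the $\operatorname{Ext}$ isomorphism. The paper's proof is a one-line sketch of precisely the argument you have written out in detail.
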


\begin{proof}
    Since $B$ is flat over $A$,  the restriction functor $B$-$\operatorname{Mod}$ $\rightarrow$ $A$-$\operatorname{Mod}$ is exact and sends an injective module to an injective module. The lemma follows.
\end{proof}

\begin{corollary} \label{EXTA2=0}
For any $t_{0}, t_{1}  \in C$ and for any $i$, we have  
 $$\operatorname{Ext}_{A}^{2}(\mathcal{E}_{t_{0}}, \mathcal{E}_{t_{1}})=0 \enspace \operatorname{and} \enspace \operatorname{ext}_{A}^{i}(\mathcal{E}_{t_{0}}, \mathcal{E}_{t_{1}}) \leq\operatorname{ext}_{Y}^{i}(\mathcal{E}_{t_{0}}, \mathcal{E}_{t_{1}}). $$ 
\end{corollary}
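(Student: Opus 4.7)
The plan is to combine the Morita equivalence in Lemma \ref{B-End} with the Ext-comparison in Lemma \ref{ExtB} to push the vanishing results from $\operatorname{Ext}_Y^2$ that we already established (Lemma \ref{EXT0}) down to $\operatorname{Ext}_A^2$, and more generally to bound $\operatorname{ext}_A^i$ by $\operatorname{ext}_Y^i$.

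\textbf{Step 1 (set up the comparison).} I would start by applying Lemma \ref{ExtB} with $M = \mathcal{E}_{t_0}$ and target $B\otimes_A \mathcal{E}_{t_1}$ (viewed as a $B$-module), obtaining a natural isomorphism
\[
\operatorname{Ext}_B^i(B\otimes_A \mathcal{E}_{t_0},\, B\otimes_A \mathcal{E}_{t_1}) \;\cong\; \operatorname{Ext}_A^i(\mathcal{E}_{t_0},\, B\otimes_A \mathcal{E}_{t_1}),
\]
where on the right $B\otimes_A \mathcal{E}_{t_1}$ is regarded as a left $A$-module by restriction.

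\textbf{Step 2 (invoke Morita and identify with Ext on $Y$).} By Lemma \ref{B-End}(ii), $B$ is Morita equivalent to $\mathcal{O}_Y$, and $B\otimes_A \mathcal{E}_{t_j}$ corresponds to ${}_Y\mathcal{E}_{t_j}$ under this equivalence. Hence
\[
\operatorname{Ext}_B^i(B\otimes_A \mathcal{E}_{t_0},\, B\otimes_A \mathcal{E}_{t_1}) \;\cong\; \operatorname{Ext}_Y^i({}_Y\mathcal{E}_{t_0},\, {}_Y\mathcal{E}_{t_1}).
\]
Combining with Step 1 gives $\operatorname{Ext}_A^i(\mathcal{E}_{t_0},\, B\otimes_A \mathcal{E}_{t_1}) \cong \operatorname{Ext}_Y^i({}_Y\mathcal{E}_{t_0},\, {}_Y\mathcal{E}_{t_1})$.

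\textbf{Step 3 (extract $\mathcal{E}_{t_1}$ as a summand).} From the graded decomposition $B = A \oplus Au$ in Lemma \ref{B-End}(i), which is in particular a splitting as right $A$-modules, we obtain a splitting of left $A$-modules
\[
B\otimes_A \mathcal{E}_{t_1} \;\cong\; \mathcal{E}_{t_1} \,\oplus\, (Au\otimes_A \mathcal{E}_{t_1}).
\]
Applying $\operatorname{Ext}_A^i(\mathcal{E}_{t_0},-)$ yields a direct-sum decomposition in which $\operatorname{Ext}_A^i(\mathcal{E}_{t_0},\mathcal{E}_{t_1})$ is a summand of $\operatorname{Ext}_A^i(\mathcal{E}_{t_0}, B\otimes_A \mathcal{E}_{t_1})$. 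In particular
\[
\operatorname{ext}_A^i(\mathcal{E}_{t_0},\mathcal{E}_{t_1}) \;\leq\; \operatorname{ext}_A^i(\mathcal{E}_{t_0},\, B\otimes_A \mathcal{E}_{t_1}) \;=\; \operatorname{ext}_Y^i({}_Y\mathcal{E}_{t_0},\, {}_Y\mathcal{E}_{t_1}),
\]
which is the second claim of the corollary.

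\textbf{Step 4 (vanishing at $i=2$).} For the first claim, take $i=2$ and apply Lemma \ref{EXT0}, which says $\operatorname{Ext}_Y^2({}_Y\mathcal{E}_{t_0},\, {}_Y\mathcal{E}_{t_1}) = 0$ for any two $A$-line bundles. The inequality from Step 3 then forces $\operatorname{Ext}_A^2(\mathcal{E}_{t_0},\mathcal{E}_{t_1})=0$, completing the proof. The main subtlety (rather than obstacle) is Step 3: one must be careful to observe that the summand decomposition $B = A \oplus Au$ is as \emph{right} $A$-modules, which is what makes $-\otimes_A \mathcal{E}_{t_1}$ split, and then that the resulting decomposition is as \emph{left} $A$-modules so that $\operatorname{Ext}_A^i(\mathcal{E}_{t_0}, -)$ can be applied. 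Everything else is a direct assembly of the preceding lemmas.
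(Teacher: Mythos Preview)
Your proof is correct and follows essentially the same approach as the paper: use Lemma~\ref{ExtB} and the Morita equivalence of Lemma~\ref{B-End} to identify $\operatorname{Ext}_Y^i(\mathcal{E}_{t_0},\mathcal{E}_{t_1})$ with $\operatorname{Ext}_A^i(\mathcal{E}_{t_0}, B\otimes_A\mathcal{E}_{t_1})$, split via $B=A\oplus Au$, and then apply Lemma~\ref{EXT0}. Your remark about the bimodule structure of the decomposition $B=A\oplus Au$ is a helpful clarification that the paper leaves implicit.
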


\begin{proof}
   By Lemma \ref{B-End} and Lemma \ref{ExtB}, 
\begin{align*}
\operatorname{Ext}_{Y}^{i}(\mathcal{E}_{t_{0}}, \mathcal{E}_{t_{1}}) &=\operatorname{Ext}_{B}^{i}(B\otimes_{A} \mathcal{E}_{t_{0}}, B\otimes_{A} \mathcal{E}_{t_{1}})\\
 &=\operatorname{Ext}_{A}^{i}(\mathcal{E}_{t_{0}}, \mathcal{E}_{t_{1}}\oplus Au\otimes_{A} \mathcal{E}_{t_{1}})\\
  &= \operatorname{Ext}_{A}^{i}(\mathcal{E}_{t_{0}}, \mathcal{E}_{t_{1}}) \oplus  \operatorname{Ext}_{A}^{i}(\mathcal{E}_{t_{0}}, Au\otimes_{A}\mathcal{E}_{t_{1}})   
  \end{align*}
 From this equality, it is clear $\operatorname{ext}_{A}^{i}(\mathcal{E}_{t_{0}}, \mathcal{E}_{t_{1}})\leq \operatorname{ext}_{Y}^{i}(\mathcal{E}_{t_{0}}, \mathcal{E}_{t_{1}}). $ 
By Lemma \ref{EXT0}, $\operatorname{Ext}_{Y}^{2}(\mathcal{E}_{t_{0}}, \mathcal{E}_{t_{1}})=0 $. Thus $\operatorname{Ext}_{A}^{2}(\mathcal{E}_{t_{0}}, \mathcal{E}_{t_{1}}) =0$.  
 \end{proof}

Now in order to prove Lemma \ref{ prelim lemma}, we also need to show $\operatorname{Ext}_{A}^{1}(\mathcal{E}_{t_{0}}, \mathcal{E}_{t_{1}})=0$ for any $t_{0}\neq t_{1}.$ For this, we need to use the explicit description of the moduli space of $A$-line bundles $C$. Recall that $C$ is a genus 2 curve realized as a double cover of $\mathbb{P}^{2}$ ramified at 6 points. 

Let $\rho: C\to \mathbb{P}^{1}$ be the double cover, $\delta: C\to C $ be the involution map, and  $c_{1}, ..., c_{6}$ be the six ramified points. By construction in \cite{chan2011moduli}, there are two cases.

$\bullet$ If $t \notin \left\{c_{1}, \cdots, c_{6}\right\}$, then as an $\mathcal{O}_{Y}$-module, $_{Y}\mathcal{E}_{t}$ is not split and is a $\mu_{H}$-stable sheaf. Moreover, $_{Y}\mathcal{E}_{\delta(t)}\simeq$ $ _{Y}\mathcal{E}_{t}$ as $\mathcal{O}_{Y}$-modules and for any other point $t'\notin \{t, \delta(t)\}$, we have that $_{Y}\mathcal{E}_{t'}\not \simeq$ $ _{Y}\mathcal{E}_{t}$ as $\mathcal{O}_{Y}$-modules.

$\bullet$ If $t=c_{i}$ for some $c_{i}$, then as an $\mathcal{O}_{Y}$-module, $\mathcal{E}_{c_{i}}$ is split and strictly $\mu_{H}$-semistable. Using the notation from Lemma \ref{des blow}, we can assume that $$\mathcal{E}_{c_{1}}\simeq A\otimes_{Y}\mathcal{O}_{Y}(L_{21})\simeq A\otimes_{Y}\mathcal{O}_{Y}(E_{1})\simeq  \mathcal{O}_{Y}(E_{1})\oplus \mathcal{O}_{Y}(L_{21}),$$ and $$\mathcal{E}_{c_{i}}\simeq A\otimes_{Y} \mathcal{O}_{Y}(L_{2,(i+1)})\simeq A\otimes_{Y}\mathcal{O}_{Y}(E_{i+1})\simeq \mathcal{O}_{Y}(L_{2,(i+1)})\oplus \mathcal{O}_{Y}(E_{i+1})$$ for $2 \leq i\leq 6.$ Here $L_{2,(i+1)}$ is the line passing through $p_2$ and $p_{i + 1}$. As before, for ease of notation, we have written $L_{21}$ to mean $L_{2, 1}$. The final step in the proof of Proposition \ref{The most important} is the following lemma:


\begin{lemma}\label{EXTA1}
    $\operatorname{Ext}_{A}^{1}(\mathcal{E}_{t_{0}}, \mathcal{E}_{t_{1}})=0$, for $t_{0}\neq t_{1}.$
\end{lemma}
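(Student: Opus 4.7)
Combining Corollary \ref{EXTA2=0} (which gives $\operatorname{ext}^i_A \leq \operatorname{ext}^i_Y$ and, more usefully, the Morita decomposition proved en route) with Corollary \ref{Ext0=Ext1} ($\operatorname{ext}^0_Y = \operatorname{ext}^1_Y$ for $A$-line bundles), the problem reduces to controlling $\operatorname{Hom}_Y(\mathcal{E}_{t_0}, \mathcal{E}_{t_1})$, and when that group is nonzero, to refining the bound via the Morita splitting. I split the analysis according to whether the underlying $\mathcal{O}_Y$-modules ${}_Y\mathcal{E}_{t_0}$ and ${}_Y\mathcal{E}_{t_1}$ are isomorphic, using the two cases recalled just before the lemma.

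\textbf{Case A: ${}_Y\mathcal{E}_{t_0} \not\simeq {}_Y\mathcal{E}_{t_1}$.} I will show $\operatorname{Hom}_Y(\mathcal{E}_{t_0}, \mathcal{E}_{t_1}) = 0$. If both $t_i$ are unramified, then ${}_Y\mathcal{E}_{t_0}$ and ${}_Y\mathcal{E}_{t_1}$ are non-isomorphic $\mu_H$-stable bundles of the same slope, so the Hom vanishes by stability. If one or both of $t_i$ is a ramified point $c_j$, the split form ${}_Y\mathcal{E}_{c_j} \simeq \mathcal{O}_Y(E_a) \oplus \mathcal{O}_Y(L_{2a})$ reduces $\operatorname{Hom}_Y$ to a direct sum of line-bundle Homs, which vanish either by slope/stability against the remaining stable factor (mixed sub-case) or by direct $H^0$ computations for divisors like $E_\beta - E_\alpha$ and $L_{2\beta} - E_\alpha$ via intersection numbers with $H$ and the exceptional curves (both-ramified sub-case). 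In each subcase, Corollaries \ref{Ext0=Ext1} and \ref{EXTA2=0} then give $\operatorname{ext}^1_A = 0$.

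\textbf{Case B: ${}_Y\mathcal{E}_{t_0} \simeq {}_Y\mathcal{E}_{t_1}$.} Since $t_0 \neq t_1$, the recalled description forces both $t_i$ to be unramified with $t_1 = \delta(t_0)$. Stability gives $\operatorname{Hom}_Y = k$, hence $\operatorname{ext}^1_Y = 1$ and the direct bound only yields $\operatorname{ext}^1_A \leq 1$. To reach $0$ I use the full Morita decomposition appearing in the proof of Corollary \ref{EXTA2=0}:
\[
\operatorname{Ext}^1_Y(\mathcal{E}_{t_0}, \mathcal{E}_{\delta(t_0)}) \simeq \operatorname{Ext}^1_A(\mathcal{E}_{t_0}, \mathcal{E}_{\delta(t_0)}) \oplus \operatorname{Ext}^1_A(\mathcal{E}_{t_0}, Au \otimes_A \mathcal{E}_{\delta(t_0)}).
\]
The crucial identification is $Au \otimes_A \mathcal{E}_{\delta(t_0)} \simeq \mathcal{E}_{t_0}$: the hypothesis ${}_Y\mathcal{E}_{t_0} \simeq {}_Y\mathcal{E}_{\delta(t_0)}$ together with Lemma \ref{B-End}(ii) gives $B \otimes_A \mathcal{E}_{t_0} \simeq B \otimes_A \mathcal{E}_{\delta(t_0)}$ as $B$-modules, and Krull--Schmidt applied to the $\mathbb{Z}/2$-graded pieces of $B = A \oplus Au$---together with $\operatorname{End}_A = k$ for any $A$-line bundle and $\mathcal{E}_{t_0} \not\simeq \mathcal{E}_{\delta(t_0)}$ as $A$-modules---pins this down. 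Combined with $\operatorname{ext}^1_A(\mathcal{E}_{t_0}, \mathcal{E}_{t_0}) = \dim T_{t_0} C = 1$ (from smoothness of $C$, $\operatorname{Ext}^2_A(\mathcal{E}_{t_0}, \mathcal{E}_{t_0}) = 0$, and standard deformation theory for simple $A$-modules), the decomposition reads $1 = \operatorname{ext}^1_A(\mathcal{E}_{t_0}, \mathcal{E}_{\delta(t_0)}) + 1$, giving the required vanishing.

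The main obstacle is the identification $Au \otimes_A \mathcal{E}_{\delta(t)} \simeq \mathcal{E}_t$ in Case B, which requires careful bookkeeping of the involution $\sigma^\vee$ together with the $\mathbb{Z}/2$-grading on $B$. A cleaner alternative that bypasses it entirely is numerical: $\chi_A(\cdot,\cdot)$ factors through the numerical K-theory of $A$-modules, at $t_0 = t_1$ equals $1 - 1 + 0 = 0$ by the same tangent-space input, and hence vanishes on all pairs in $C \times C$; for $t_0 \neq t_1$, Proposition \ref{The most important}(i) and Corollary \ref{EXTA2=0} then force $\operatorname{ext}^1_A = -\chi_A = 0$ directly.
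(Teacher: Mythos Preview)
Your main argument is correct and follows essentially the same case-by-case strategy as the paper: it too splits according to whether the underlying $\mathcal{O}_Y$-modules are isomorphic, handles the stable/non-isomorphic cases by showing $\operatorname{Hom}_Y=0$ and then applying Corollaries~\ref{Ext0=Ext1} and~\ref{EXTA2=0}, and treats the conjugate pair $t_1=\delta(t_0)$ via the Morita splitting together with $\operatorname{ext}^1_A(\mathcal{E}_{t_0},\mathcal{E}_{t_0})=\dim T_{t_0}C=1$. Two small execution differences are worth flagging. First, for the both-ramified case the paper does not go through $\operatorname{Hom}_Y=0$; it instead uses the adjunction $\operatorname{Ext}^1_A(A\otimes_Y\mathcal{O}_Y(E_\alpha),\,-)\simeq \operatorname{Ext}^1_Y(\mathcal{O}_Y(E_\alpha),\,-)$ and computes the relevant $H^1$'s directly by Riemann--Roch and $H^0,H^2$ vanishing. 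Your route is just as valid (all four cross-$H^0$'s vanish by the intersection tests you indicate), and arguably more uniform. Second, for the identification $Au\otimes_A\mathcal{E}_{\delta(t_0)}\simeq \mathcal{E}_{t_0}$, the paper argues by producing nonzero $A$-morphisms both ways (from the decomposition of $\operatorname{Hom}_Y$) and then observing that nonzero maps between $A$-line bundles with equal Chern classes are isomorphisms; your Krull--Schmidt argument from $B\otimes_A\mathcal{E}_{t_0}\simeq B\otimes_A\mathcal{E}_{\delta(t_0)}$ is a legitimate alternative and perhaps conceptually cleaner.

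Your closing $\chi_A$ alternative is a genuinely different and shorter route that the paper does not take. Constancy of $(t_0,t_1)\mapsto \chi_A(\mathcal{E}_{t_0},\mathcal{E}_{t_1})$ on $C\times C$ follows from flatness of the universal family, and the diagonal value $1-1+0=0$ uses only Proposition~\ref{Simple module}, the tangent-space identification, and $\operatorname{Ext}^2_A=0$. Combined with the already-established $\operatorname{Hom}_A(\mathcal{E}_{t_0},\mathcal{E}_{t_1})=0$ for $t_0\neq t_1$ (proved in Proposition~\ref{The most important} before Lemma~\ref{ prelim lemma} is invoked) and Corollary~\ref{EXTA2=0}, this gives $\operatorname{ext}^1_A=0$ in one stroke, bypassing the entire case analysis. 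If you want the shortest proof, lead with this; the case-by-case argument then becomes optional corroboration.
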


\begin{proof}
    Following the discussions above, we divide the lemma into four cases:

\begin{enumerate}
    
 \item $t_{0}$ $\notin \{c_{1}, \cdots, c_{6}\}$, $ t_{1} \not = \delta (t_{0})$;   \item  $t_{0}\notin \{c_{1}, \cdots, c_{6}\}$, $ t_{1}  = \delta (t_{0})$;

    \item  $t_{0}$ $\in \{c_{1}, \cdots, c_{6}\}$,  $ t_{1} \notin  \{c_{1}, \cdots, c_{6}\}$; \item  $t_{0}, t_{1} \in\{c_{1}, \cdots, c_{6}\}. $ 
\end{enumerate}

First consider Case (i). We know that $\mathcal{E}_{t_{0}}$ and $\mathcal{E}_{t_{1}}$ have the same Chern classes and $\mathcal{E}_{t_{0}}$ is $\mu_{H}$-stable. Since $\prescript{}{Y}{\mathcal{E}_{t_{0}}}\not \simeq \prescript{}{Y}{\mathcal{E}_{t_{1}}}$,  $\operatorname{Hom}_{Y}(\mathcal{E}_{t_{0}}, \mathcal{E}_{t_{1}}) =0$ by \cite[Proposition 1.2.7]{huybrechts2010geometry}. By Corollary \ref{Ext0=Ext1}, $\operatorname{Ext}_{Y}^{1}(\mathcal{E}_{t_{0}}, \mathcal{E}_{t_{1}})=0$. By Corollary \ref{EXTA2=0}, $\operatorname{Ext}_{A}^{1}(\mathcal{E}_{t_{0}}, \mathcal{E}_{t_{1}})=0$. 

 Next consider Case (ii). Using the same notations in Lemma \ref{B-End}, first we want to show that $Au\otimes_{A} \mathcal{E}_{t_{0}}=\mathcal{E}_{t_{1}}.$ Since $\mathcal{E}_{t_{0}}\simeq \mathcal{E}_{t_{1}}$ as $\mathcal{O}_{Y}$-modules, and $\mathcal{E}_{t_{0}}$ is stable, $\operatorname{Hom}_{Y}(\mathcal{E}_{t_{0}}, \mathcal{E}_{t_{1}})=k$. Since $\mathcal{E}_{t_{0}}\not\simeq \mathcal{E}_{t_{1}}$ as $A$-module, $\operatorname{Hom}_{A}(\mathcal{E}_{t_{0}}, \mathcal{E}_{t_{1}})=0$.
By the proof of Corollary \ref{EXTA2=0}, we have $$ \operatorname{Hom}_{Y}(\mathcal{E}_{t_{0}}, \mathcal{E}_{t_{1}}) =\operatorname{Hom}_{A}(\mathcal{E}_{t_{0}},\mathcal{E}_{t_{1}}) \oplus\operatorname{Hom}_{A}(\mathcal{E}_{t_{0}}, Au \otimes_{A}\mathcal{E}_{t_{1}}).$$
Thus we have $\operatorname{Hom}_{A}(\mathcal{E}_{t_{0}}, Au \otimes_{A}\mathcal{E}_{t_{1}})=k.$ By symmetry, we also have 
$\operatorname{Hom}_{A}(\mathcal{E}_{t_{1}}, Au \otimes_{A}\mathcal{E}_{t_{0}})=k.$
Since $Au\otimes_{A}Au \simeq A$ as $A$-bimodules, the functor $$Au\otimes_{A}: A-\operatorname{Mod}\to A-\operatorname{Mod}$$ defines an equivalence of categories. We have that
$$\operatorname{Hom}_{A}(Au\otimes_{A}\mathcal{E}_{t_{1}}, \mathcal{E}_{t_{0}})=\operatorname{Hom}_{A}(\mathcal{E}_{t_{1}}, Au \otimes_{A}\mathcal{E}_{t_{0}})=k.$$
Since all the $A$-modules here are $A$-line bundles and hence simple, we have  $A$-module injective morphisms: $$f: \mathcal{E}_{t_{0}}\to  Au\otimes_{A} \mathcal{E}_{t_{1}}\enspace \operatorname{and} \enspace g:Au\otimes_{A} \mathcal{E}_{t_{1}} \to \mathcal{E}_{t_{0}}.$$ The morphisms $$f\circ g:Au\otimes_{A} \mathcal{E}_{t_{1}}\to Au\otimes_{A} \mathcal{E}_{t_{1}} \enspace \operatorname{and} \enspace  g\circ f :\mathcal{E}_{t_{0}}\to \mathcal{E}_{t_{0}}$$ are injective, thus are isomorphisms. 
Therefore, $f$ and $g$ are both isomorphims, $Au\otimes_{A} \mathcal{E}_{t_{0}} \simeq \mathcal{E}_{t_{1}}.$
By the proof of Corollary \ref{EXTA2=0}, 
\begin{align*}
\operatorname{Ext}_{Y}^{1}(\mathcal{E}_{t_{0}}, \mathcal{E}_{t_{1}}) 
 &=\operatorname{Ext}_{A}^{1}(\mathcal{E}_{t_{0}}, \mathcal{E}_{\delta(t_{0})}\oplus Au\otimes_{A} \mathcal{E}_{t_{1}})\\
  &= \operatorname{Ext}_{A}^{1}(\mathcal{E}_{t_{0}}, \mathcal{E}_{t_{1}}) \oplus  \operatorname{Ext}_{A}^{1}(\mathcal{E}_{t_{0}}, Au\otimes_{A}\mathcal{E}_{t_{1}}) \\
 &= \operatorname{Ext}_{A}^{1}(\mathcal{E}_{t_{0}}, \mathcal{E}_{t_{1}}) \oplus  \operatorname{Ext}_{A}^{1}(\mathcal{E}_{t_{0}}, \mathcal{E}_{t_{0}}).
\end{align*}
By \cite[Lemma 3.1]{hoffmann2005moduli}, the Kodaira-Spencer map gives an isomorphism $$\operatorname{Ext}_{A}^{1}(\mathcal{E}_{t_{0}}, \mathcal{E}_{t_{0}}) \cong T_{t_{0}}C, $$ where $T_{t_{0}}C$ is the tangent space of $C$ at $t_{0}$. Since $C$ is a smooth curve, $\operatorname{ext}_{A}^{1}(\mathcal{E}_{t_{0}},\mathcal{E}_{t_{0}})=1$. Note that since $\mathcal{E}_{t_{0}}$ and $\mathcal{E}_{t_{1}}$ are stable as $\mathcal{O}_Y$-modules, they are simple sheaves. Hence $\operatorname{ext}_{Y}^{0}(\mathcal{E}_{t_{0}}, \mathcal{E}_{t_{1}})=1$.
By Corollary \ref{Ext0=Ext1}, $$\operatorname{ext}_{Y}^{1}(\mathcal{E}_{t_{0}}, \mathcal{E}_{t_{1}})= \operatorname{ext}_{Y}^{0}(\mathcal{E}_{t_{0}}, \mathcal{E}_{t_{1}})=1.$$ 
So we have that $ \operatorname{Ext}_{A}^{1}(\mathcal{E}_{t_{0}}, \mathcal{E}_{t_{1}})=0$. This completes the proof of Case (ii). 

Next consider Case (iii). Note that $\mathcal{E}_{t_{1}}$ is stable. Then the proof is essentially the same as Case (i). We omit it for brevity.

Finally consider Case (iv). Without loss of generality, we may assume that $t_{0}=c_{1}, t_{1}=c_{2}$. Now recall that $\mathcal{E}_{t_{0}}\simeq A\otimes_{Y} \mathcal{O}_{Y}(E_{1})$ and $ \mathcal{E}_{t_{1}}\simeq A\otimes_{Y} \mathcal{O}_{Y}(E_{3}).$ By \cite[Proposition 2.6]{chan2011moduli}, 
\begin{align*}
\operatorname{Ext}_{A}^{1}(\mathcal{E}_{t_{0}}, \mathcal{E}_{t_{1}})
&=\operatorname{Ext}_{A}^{1}(A\otimes_{Y}\mathcal{O}_{Y}(E_{1}), A\otimes_{Y}\mathcal{O}_{Y}(E_{3}))\\
&=\operatorname{Ext}_{Y}^{1}(\mathcal{O}_{Y}(E_{1}), A\otimes_{Y}\mathcal{O}_{Y}(E_{3}))\\
&=\operatorname{Ext}_{Y}^{1}(\mathcal{O}_{Y}(E_{1}), \mathcal{O}_{Y}(E_{3})\oplus  \mathcal{O}_{Y}(L_{23}))\\
&= H^{1}(Y,\mathcal{O}_{Y}(E_{3}-E_{1}))\oplus H^{1}(Y, \mathcal{O}_{Y}(L_{23}-E_{1})).
\end{align*}

Let $l$ be a line on  $\mathbb{P}^{2}$ passing through the point $p_{1}$ but not passing through the point $p_{3}$. Let $\widetilde{l}$ be the strict transform of $l$ under the blow-up $\phi: Y\to \mathbb{P}^{2}$. Then we have that 
$$\widetilde{l}\cdot (E_{3}-E_{1})=-1 \ \ \text{and} \ \ \widetilde{l}\cdot \widetilde{l}=0.$$ Hence $|E_{3}-E_{1}|=\emptyset$. Suppose $|L_{23}-E_{1}|\not = \emptyset$. Let $D\in |L_{23}-E_{1}|$, then $L_{23}\sim E_{1}+D$. Since $D$ is effective, $D=nL_{23}+D'$, where $n\geq 0$, $D'$ is effective and does not contain $L_{23}$ in its support. Note that
$$-1 = L_{23}\cdot L_{23}=L_{23}\cdot (E_{1}+nL_{23}+D').$$ Since $D'\cdot L_{23}\geq 0$ and $L_{23}\cdot E_{1}=0$, we get $n\geq 1$. Thus, $$H\cdot L_{23}=H\cdot (E_{1}+nL_{23}+D')\geq H\cdot E_{1}+H\cdot nL_{23}=1+n>1,$$ which contradicts the equality $H\cdot L_{23}=1$. So $|L_{23}-E_{1}|=\emptyset$.

Since $ H\cdot (E_{1}-E_{3}-H)=-2$ and $H\cdot (E_{1}-L_{23}-H)=-2$, we have $|E_{1}-E_{3}-H|=\emptyset$ and $|E_{1}-L_{23}-H|=\emptyset$.
So we have  $$H^{0}(Y,\mathcal{O}_{Y}(E_{3}-E_{1}))=0, \  H^{2}(Y,\mathcal{O}_{Y}(E_{3}-E_{1}))=H^{0}(Y, \mathcal{O}_{Y}(E_{1}-E_{3}-H))=0,$$
and 
$$H^{0}(Y,\mathcal{O}_{Y}(L_{23}-E_{1}))=0, \  H^{2}(Y,\mathcal{O}_{Y}(L_{23}-E_{1}))=H^{0}(Y, \mathcal{O}_{Y}(E_{1}-L_{23}-H))=0.$$
On the other hand, $$\chi(Y, \mathcal{O}_{Y}(E_{3}-E_{1}))=\frac{1}{2}(E_{3}-E_{1})(E_{3}-E_{1}+H)+\chi(Y,\mathcal{O}_{Y})=0,$$
$$\chi(Y, \mathcal{O}_{Y}(L_{23}-E_{1}))=\frac{1}{2}(L_{23}-E_{1})(L_{23}-E_{1}+H)+\chi(Y,\mathcal{O}_{Y})=0.$$
So $H^{1}(Y,\mathcal{O}_{Y}(E_{3}-E_{1}))=H^{1}(Y,\mathcal{O}_{Y}(L_{23}-E_{1}))=0.$ Thus we have $\operatorname{Ext}_{A}^{1}(\mathcal{E}_{t_{0}},\mathcal{E}_{t_{1}})=0$. This completes the proof of the lemma.
\end{proof}

\begin{proof}[ Proof  of Proposition \ref {The most important}]
By Corollary \ref{EXTA2=0} and Lemma \ref{EXTA1}, we get Lemma \ref{ prelim lemma}. This completes the proof of Proposition \ref{The most important}.
    
\end{proof}

In order to prove our main result in this section, we need to relate the derived categories $D^{b}(\mathbb{P}^{2}, A)$ and $D^{b}(X_{A})$, where $X_{A}$ is the  conic bundle described in section \ref{section 2}.

As discussed in section  \ref{section 2}, we know that $A$ is the even part of a Clifford algebra associated to the Brauer class.  By \cite[Theorem 4.2]{kuznetsov2008derived}, we have a fully faithful functor  $$\Psi': D^{b}(\mathbb{P}^{2}, A)\to D^{b}(X_{A}),\quad \mathcal{F}^{\bullet}\to\mathcal{E}' \otimes_{f^{*}A}f^{*}\mathcal{F}^{\bullet},$$   
where   $f: X_{A} \to X$ is the structure map of the conic bundle and $\mathcal{E}'$ is a right  $f^{*}(A)$-module. In particular, $\Psi$ is a Fourier-Mukai transform with kernel $\mathcal{E}'$. So we get a morphism 
$$ \Psi\circ \Phi_{\mathcal{E}_{A}}: D^{b}(C) \to D^{b}(X_{A}). $$
Since $\Phi_{\mathcal{E}_{A}}$ is a Fourier-Mukai transform with kernel $\mathcal{E}_{A}$, we get the following lemma.

\begin{lemma}\label{Fourier-Mukai}

The map $ \Psi\circ \Phi_{\mathcal{E}_{A}}: D^{b}(C) \to D^{b}(X_{A}) $ is a Fourier-Mukai transform. Thus, there exists a $\mathcal{P}\in D^{b}(C\times X_{A})$ such that $\Psi\circ \Phi_{\mathcal{E}_{A}}=\Phi_{\mathcal{P}}$.
\end{lemma}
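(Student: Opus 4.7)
The plan is to show this directly by computing the composition using flat base change and the projection formula, thereby identifying a concrete Fourier--Mukai kernel on $C \times X_A$. Unwinding definitions, for $\mathcal{F}^\bullet \in D^b(C)$ we have
\[
(\Psi \circ \Phi_{\mathcal{E}_A})(\mathcal{F}^\bullet) \;=\; \mathcal{E}' \otimes_{f^* A} f^* q_*\bigl(p^* \mathcal{F}^\bullet \otimes \mathcal{E}_A\bigr),
\]
so the task is to rewrite the right-hand side as a pushforward from $C \times X_A$.

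First, I would set up the Cartesian square
\[
\begin{tikzcd}
C \times X_A \ar[r, "\tilde f"] \ar[d, "q'"'] & C \times \mathbb{P}^2 \ar[d, "q"] \\
X_A \ar[r, "f"'] & \mathbb{P}^2,
\end{tikzcd}
\]
where $\tilde f = \operatorname{id}_C \times f$ and $q',\,p'$ are the projections from $C \times X_A$. Since $f \colon X_A \to \mathbb{P}^2$ is a standard conic bundle, it is flat, so flat base change gives $f^* q_* \simeq q'_* \tilde f^*$. Combined with $\tilde f^* p^* = p'^*$, this yields
\[
f^* q_*\bigl(p^*\mathcal{F}^\bullet \otimes \mathcal{E}_A\bigr) \;\simeq\; q'_*\bigl(p'^* \mathcal{F}^\bullet \otimes \tilde f^* \mathcal{E}_A\bigr).
\]
Note that $\tilde f^* \mathcal{E}_A$ carries a natural $\tilde f^* q^* A = q'^* f^* A$-module structure.

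Next, I would apply the projection formula for the map $q'$, now in the noncommutative setting for modules over the sheaf of algebras $q'^* f^* A$. Writing $B := f^* A$ and using that $\mathcal{O}$-linear tensoring commutes with pullback, one gets
\[
\mathcal{E}' \otimes_{B} q'_*\bigl(p'^*\mathcal{F}^\bullet \otimes \tilde f^* \mathcal{E}_A\bigr) \;\simeq\; q'_*\Bigl( p'^* \mathcal{F}^\bullet \otimes \bigl(q'^* \mathcal{E}' \otimes_{q'^* B} \tilde f^* \mathcal{E}_A\bigr)\Bigr).
\]
Setting
\[
\mathcal{P} \;:=\; q'^*\mathcal{E}' \otimes_{q'^* f^* A} \tilde f^* \mathcal{E}_A \;\in\; D^b(C \times X_A),
\]
the displayed chain of isomorphisms exhibits $\Psi \circ \Phi_{\mathcal{E}_A}$ as $\Phi_{\mathcal{P}}$.

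The only steps requiring care are the applications of flat base change and the projection formula in this mildly noncommutative context, where one must verify compatibility with the $f^* A$-module structure on $\mathcal{E}'$ and the $q^* A$-module structure on $\mathcal{E}_A$. Both are standard: flat base change is valid because $f$ is flat, and the projection formula holds because $A$ (and hence $f^*A$) is locally free over the base, so the relative tensor product behaves well under pushforward along $q'$. With these two ingredients in hand, the identification of the kernel $\mathcal{P}$ is immediate.
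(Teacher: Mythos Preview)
Your argument is correct, but it is considerably more explicit than the paper's. The paper's proof is a single sentence: it simply invokes the general fact that a composition of Fourier--Mukai transforms is again a Fourier--Mukai transform (the standard convolution-of-kernels statement, which carries over to this mildly noncommutative setting via Kuznetsov's framework). You instead unpack that black box directly, using flat base change along the conic fibration $f$ and a projection formula for modules over $q'^* f^* A$, and thereby produce the kernel concretely as $\mathcal{P} = q'^*\mathcal{E}' \otimes_{q'^* f^* A} \tilde f^* \mathcal{E}_A$. Your route buys an explicit description of $\mathcal{P}$, which the paper never needs; the paper's one-line citation is cleaner for its purposes. One small sharpening: the noncommutative projection formula you use is justified not merely by $A$ being locally free over $\mathcal{O}_{\mathbb{P}^2}$, but by the fact that $\mathcal{E}'$ is a perfect complex of $f^*A$-modules (indeed locally projective in Kuznetsov's construction), which is what allows you to pull it through $q'_*$.
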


\begin{proof}
    We only need to note that the composition of two Fourier-Mukai transforms is again a Fourier-Mukai transform.
\end{proof}

Now we are able to prove the main theorem of this section.

\begin{theorem} \label{The last one}
    
The functor $\Phi_{\mathcal{E}_{A}}: D^{b}(C) \to D^{b}(\mathbb{P}^{2}, A)$ is fully faithful.

\end{theorem}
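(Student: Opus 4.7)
The plan is to reduce to the Bondal--Orlov criterion by working through the fully faithful embedding $\Psi: D^{b}(\mathbb{P}^{2}, A) \hookrightarrow D^{b}(X_{A})$ coming from Kuznetsov's Clifford algebra description. By Lemma \ref{Fourier-Mukai}, the composition $\Psi \circ \Phi_{\mathcal{E}_{A}}$ is a Fourier--Mukai transform $\Phi_{\mathcal{P}}: D^{b}(C) \to D^{b}(X_{A})$ between derived categories of smooth projective varieties (a curve of genus $2$ and a threefold). Since $\Psi$ is fully faithful, a formal argument shows that $\Phi_{\mathcal{E}_{A}}$ is fully faithful if and only if $\Phi_{\mathcal{P}}$ is. Hence it suffices to prove that $\Phi_{\mathcal{P}}$ is fully faithful.

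To this end I would invoke the Bondal--Orlov criterion: $\Phi_{\mathcal{P}}$ is fully faithful provided that for all closed points $t, t_{0}, t_{1} \in C$ with $t_{0}\neq t_{1}$, one has $\operatorname{Hom}_{X_{A}}(\Phi_{\mathcal{P}}(\mathcal{O}_{t}), \Phi_{\mathcal{P}}(\mathcal{O}_{t})) = k$, $\operatorname{Ext}^{i}_{X_{A}}(\Phi_{\mathcal{P}}(\mathcal{O}_{t_{0}}), \Phi_{\mathcal{P}}(\mathcal{O}_{t_{1}})) = 0$ for every $i$, and $\operatorname{Ext}^{i}_{X_{A}}(\Phi_{\mathcal{P}}(\mathcal{O}_{t}), \Phi_{\mathcal{P}}(\mathcal{O}_{t})) = 0$ for $i \notin [0, \dim C] = [0,1]$. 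By Lemma \ref{D-F-M}, $\Phi_{\mathcal{E}_{A}}(\mathcal{O}_{t}) \simeq \mathcal{E}_{t}$, so $\Phi_{\mathcal{P}}(\mathcal{O}_{t}) \simeq \Psi(\mathcal{E}_{t})$. Full faithfulness of $\Psi$ then yields $\operatorname{Ext}^{i}_{X_{A}}(\Psi(F), \Psi(G)) \cong \operatorname{Ext}^{i}_{A}(F, G)$ for all $F, G \in D^{b}(\mathbb{P}^{2}, A)$, so the three Bondal--Orlov conditions translate exactly into parts (ii), (i), and (iii) of Proposition \ref{The most important}, respectively (the vanishing for $i<0$ being automatic).

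The main technical content of Theorem \ref{The last one} is thus concentrated in Proposition \ref{The most important}, which has already been established; what remains here is a formal application of Bondal--Orlov through the faithful embedding $\Psi$. The only point to verify carefully is that the hypotheses of Bondal--Orlov are met in our setting: both $C$ and $X_{A}$ are smooth projective varieties over the algebraically closed characteristic zero field $k$, so the classical statement applies directly, and the Fourier--Mukai nature of $\Phi_{\mathcal{P}}$ is supplied by Lemma \ref{Fourier-Mukai}.
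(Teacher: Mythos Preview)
Your proposal is correct and follows essentially the same approach as the paper's own proof: compose with the fully faithful embedding $\Psi$ into $D^{b}(X_{A})$, use Lemma~\ref{D-F-M} and full faithfulness of $\Psi$ to translate the Bondal--Orlov conditions on $\Phi_{\mathcal{P}}$ into the statements of Proposition~\ref{The most important}, and conclude. The paper cites \cite[Theorem 1.1]{bondal1995semiorthogonal} for the Bondal--Orlov criterion, but otherwise the argument is identical.
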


\begin{proof}
By Lemma \ref{Fourier-Mukai}, we have the functor $$\Psi_{\mathcal{P}}=\Psi\circ \Phi_{\mathcal{E}_{A}}: D^{b}(C)\to D^{b}(X_{A})$$
as above. Since $\Psi$ is fully faithful, by Proposition \ref{The most important}, we have the following:

\begin{enumerate}
\item $\operatorname{Ext}_{X_{A}}^{i}\left(\Psi\circ \Psi_{\mathcal{P}}(\mathcal{O}_{t_{0}}), \Psi_{\mathcal{P}}(\mathcal{O}_{t_{1}})\right)=\operatorname{Ext}_{A}^{i}\left(\mathcal{E}_{t_{0}}, \mathcal{E}_{t_{1}}\right)=0$, for any $i \geq 0$ and $t_{0}\not =t_{1}$.

\item $\operatorname{Hom}_{X_{A}}(\Psi_{\mathcal{P}}(\mathcal{O}_{t}), \Psi_{\mathcal{P}}(\mathcal{O}_{t}) )=\operatorname{Hom}_{A}(\mathcal{E}_{t}, \mathcal{E}_{t} )=k$,

\item $\operatorname{Ext}_{X_{A}}^{i}(\Psi_{\mathcal{P}}(\mathcal{O}_{t}), \Psi_{\mathcal{P}}(\mathcal{O}_{t}) )=\operatorname{Ext}_{A}^{i}(\mathcal{E}_{t}, \mathcal{E}_{t} )=0$, for $i>1$.
\end{enumerate}
By \cite[Theorem 1.1]{bondal1995semiorthogonal}, $\Psi_{\mathcal{P}}$ is fully faithful. Thus $\Phi_{\mathcal{E}_{A}}$ is also fully faithful.

\end{proof}

Let $J(X_{A})$ be the intermediate Jacobian of the conic bundle $X_{A}$ and let $J(C)$ be the Jacobian variety of $C$. We have the following consequence of the results proved so far:

\begin{theorem}\label{Jacobian main}
$J(C)\simeq J(X_{A})$ as principally polarized abelian varieties. 
\end{theorem}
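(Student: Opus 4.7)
The plan is to exploit the fully faithful Fourier--Mukai transform $\Psi_{\mathcal{P}}\colon D^{b}(C)\to D^{b}(X_{A})$ from Lemma~\ref{Fourier-Mukai} and Theorem~\ref{The last one} at the level of cohomology, showing that its cohomological realization restricts to an isomorphism $H^{1}(C,\mathbb{Q})\xrightarrow{\sim} H^{3}(X_{A},\mathbb{Q})(1)$ of weight-one Hodge structures compatible with the principal polarizations.

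First I would form the cohomological Fourier--Mukai transform $\Psi_{\mathcal{P}}^{H}\colon H^{\ast}(C,\mathbb{Q})\to H^{\ast}(X_{A},\mathbb{Q})$ defined via the Mukai vector of $\mathcal{P}$. Since $X_{A}$ is a smooth conic bundle over $\mathbb{P}^{2}$, it is a rational (hence simply connected) threefold with $h^{j,0}(X_{A})=0$ for $j\geq 1$, so its only nonzero odd cohomology is $H^{3}(X_{A})=H^{2,1}\oplus H^{1,2}$. Projecting onto the odd components on both sides yields a morphism of rational Hodge structures $H^{1}(C,\mathbb{Q})\to H^{3}(X_{A},\mathbb{Q})(1)$.

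Next I would argue injectivity via Hochschild homology: a fully faithful Fourier--Mukai transform induces a split injection on $HH_{\ast}$, and the Hochschild--Kostant--Rosenberg decomposition translates this into injectivity on Hodge summands. Explicitly, $HH_{1}(C)=H^{0}(C,\Omega^{1}_{C})$ injects into $HH_{1}(X_{A})=H^{1}(X_{A},\Omega^{2}_{X_{A}})$ (the other candidate summands $H^{0}(\Omega^{1}_{X_{A}})$ and $H^{2}(\Omega^{3}_{X_{A}})$ vanishing by rationality of $X_{A}$ and Serre duality), and analogously for $HH_{-1}$. A dimension count forces this injection to be an isomorphism: $\dim_{\mathbb{Q}}H^{1}(C,\mathbb{Q})=2g(C)=4$, while by Beauville's Prym description of intermediate Jacobians of standard conic bundles ramified along a smooth plane quartic $R$ one has $\dim J(X_{A})=g(R)-1=2$, so $\dim_{\mathbb{Q}}H^{3}(X_{A},\mathbb{Q})=4$. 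The resulting isomorphism of weight-one Hodge structures descends to an isomorphism of complex tori $J(C)\xrightarrow{\sim} J(X_{A})$.

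For the compatibility with principal polarizations, I would invoke the isometry property of the cohomological Fourier--Mukai transform: a fully faithful kernel gives an isometric embedding with respect to the Mukai pairing. Restricted to the weight-one summands on each side, the Mukai pairing reduces (up to a universal sign) to the cup-product pairings on $H^{1}(C)$ and $H^{3}(X_{A})$, which are precisely the pairings defining the principal polarizations on $J(C)$ and $J(X_{A})$. The hardest step is this last one: one has to track the $\sqrt{\mathrm{td}}$ factor in the Mukai vector carefully, and ensure that the induced cohomological isomorphism is not merely rational but integral, and matches the principal polarization on the nose rather than a nontrivial integer multiple. This may require modifying the universal family $\mathcal{E}_{A}$ by the pullback of a suitable line bundle from $C$ (still a universal family by Definition~\ref{universal family}) in order to correct the integral lattice structure and land on the principal polarization.
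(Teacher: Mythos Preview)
Your strategy is sound and in fact recapitulates the machinery behind the result the paper simply quotes. The paper's proof is a two-line application of \cite[Proposition~4.4]{bernardara2013derived}: a fully faithful Fourier--Mukai functor $D^{b}(C)\hookrightarrow D^{b}(X_{A})$ induces an injective morphism $J(C)\hookrightarrow J(X_{A})$ of abelian varieties preserving the principal polarization; since both sides have dimension~$2$, this injection is an isomorphism. Your proposal is essentially an attempt to reprove that proposition from scratch via HKR and the Mukai pairing, together with the same dimension count.

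What you gain is a self-contained argument that makes the Hodge-theoretic mechanism transparent; what you lose is brevity, and you have correctly identified where the real work hides. The point you flag as ``hardest'' --- that the cohomological realization is integral and carries the cup-product pairing on $H^{1}(C,\mathbb{Z})$ to the one on $H^{3}(X_{A},\mathbb{Z})$ on the nose --- is exactly the content of Bernardara--Bolognesi's proposition (and of the closely related arguments of Addington and others). Twisting $\mathcal{E}_{A}$ by a line bundle pulled back from $C$ will not help here: such a twist alters the Mukai vector by a class from $H^{\ast}(C)$ and leaves the induced map on the transcendental/odd part unchanged, so it cannot rescale the polarization. The correct resolution is that the algebraic correspondence $[\mathcal{P}]\in CH^{\ast}(C\times X_{A})$ already acts integrally on singular cohomology (no $\sqrt{\mathrm{td}}$ is needed once one restricts to the odd degrees, since the Todd class is even), and the compatibility of Mukai and cup-product pairings on these summands is an exercise in Poincar\'e duality. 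Rather than redoing this, you may as well cite \cite[Proposition~4.4]{bernardara2013derived} directly, as the paper does.
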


\begin{proof}
    Since $X_{A}$ is a standard conic bundle over $\mathbb{P}^{2}$ ramified along a smooth quartic, it is well known that $\operatorname{dim}(J(X_{A}))=2$. By \cite[Proposition 4.4]{bernardara2013derived}, there is an injective morphism $\phi: J(C)\to J(X_{A})$ of abelian varieties, preserving the principal polarization. Since $C$ is a curve of genus 2, $J(C)\simeq J(X_{A})$ as principally polarized abelian varieties. \end{proof}
 Note that  any standard conic bundle $B$ over $\mathbb{P}^{2}$ ramified along a smooth quartic can be constructed from an associated maximal order $A_{B}$. It is well known that there exists a smooth projective curve $\Gamma$ of genus 2 such that $J(B)=J(\Gamma)$.
 By Theorem \ref{Jacobian main}, we get the following corollary.
\begin{corollary}\label{main corollary}
Let  $B$ be a standard conic bundle over $\mathbb{P}^{2}$ ramified along a smooth quartic and $\Gamma$ be a smooth curve of genus 2 such that $J(B)\cong J(\Gamma)$. Then $\Gamma$ is the moduli space of $A_{B}$-line bundles with appropriate boundedness condition.
\end{corollary}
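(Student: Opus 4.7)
The plan is to string together the machinery already built in the paper with the classical Torelli theorem for curves. Given the standard conic bundle $B$ ramified along a smooth quartic, the one-to-one correspondence recalled in Section \ref{section 2} (via \cite{artin1972some, sarkisov1983conic}) produces the associated maximal quaternion order $A_{B}$ on $\mathbb{P}^{2}$ with $X_{A_{B}} \cong B$ as conic bundles.

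Next, I would apply Proposition \ref{curve C} to $A_{B}$: the coarse moduli space of $A_{B}$-line bundles with $c_{1}=L_{\sigma}\otimes_{Y}\mathcal{O}_{Y}(H)$ and $c_{2}=1$ is a smooth projective curve $C$ of genus $2$. By Theorem \ref{Jacobian main} applied to $A_{B}$, we obtain an isomorphism of principally polarized abelian varieties
\[
J(C) \;\cong\; J(X_{A_{B}}) \;=\; J(B).
\]
Combining this with the given hypothesis $J(B)\cong J(\Gamma)$, we get $J(C)\cong J(\Gamma)$ as principally polarized abelian varieties.

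Finally, I would invoke the Torelli theorem for curves: two smooth projective curves (both of genus $2$, in our case) with isomorphic Jacobians as principally polarized abelian varieties are themselves isomorphic. Therefore $\Gamma \cong C$, which identifies $\Gamma$ with the moduli space of $A_{B}$-line bundles with the indicated Chern classes. The ``appropriate boundedness condition'' in the statement is made precise as the choice $c_{1}=L_{\sigma}\otimes_{Y}\mathcal{O}_{Y}(H)$, $c_{2}=1$ coming from Proposition \ref{curve C}.

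The only nonroutine ingredient is the appeal to Torelli, and it is entirely standard in genus $2$, so there is no real obstacle; the corollary is essentially a packaging of Theorem \ref{Jacobian main} together with Torelli and the correspondence between standard conic bundles over $\mathbb{P}^{2}$ ramified along a smooth quartic and the associated maximal orders.
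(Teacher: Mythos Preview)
Your proposal is correct and follows exactly the route the paper intends: the paper states the corollary immediately after Theorem~\ref{Jacobian main} with only the sentence ``By Theorem~\ref{Jacobian main}, we get the following corollary,'' and your write-up simply makes explicit the two steps the paper leaves implicit, namely the passage from $B$ to its associated order $A_{B}$ with $X_{A_{B}}\cong B$, and the Torelli theorem to pass from $J(C)\cong J(\Gamma)$ to $C\cong\Gamma$. There is nothing to add.
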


\section{semiorthogonal decomposition for $D^{b}(\mathbb{P}^{2}, A)$}\label{section 5}

In this section, we determine a semiorthogonal decomposition for 
the derived category $D^{b}(\mathbb{P}^{2}, A)$. Before we do so, we need several necessary results.
Recall that we have the canonical bimodule $\omega_{A}$ of $A$, see section \ref{section 2}. Also recall that $A = \mathcal{O}_Y \oplus L$ for an invertible sheaf $L$ on $Y$ as $\mathcal{O}_Y$-modules, In fact, $L = \mathcal{O}_Y(E - E')$, where $E, E'$ are the exceptional curves on $Y$ described earlier.

\begin{proposition}  [{\cite[Proposition 4.1]{chan2011moduli}}]\label{canonical} We have the isomorphism of $A$-bimodules: 
$$\omega_{A}\cong A\otimes_{Y}\mathcal{O}_{Y}(-H).$$
    
\end{proposition}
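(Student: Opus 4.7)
The approach is to compute $\omega_{A}$ directly using Grothendieck duality for the finite flat double cover $\pi : Y \to \mathbb{P}^{2}$, together with the explicit description $A = \pi_{*}(\mathcal{O}_{Y} \oplus L_{\sigma})$ recalled in Section~\ref{section 2}.

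First I would pin down $\omega_{Y}$. Since $\pi$ is branched along the smooth quartic $R \sim 4H$, the ramification divisor $R'$ on $Y$ satisfies $\pi^{*}R = 2R'$ with $R' \sim 2H$, so Riemann--Hurwitz gives
\[
\omega_{Y} \;\cong\; \pi^{*}\omega_{\mathbb{P}^{2}} \otimes \mathcal{O}_{Y}(R') \;\cong\; \mathcal{O}_{Y}(-3H + 2H) \;=\; \mathcal{O}_{Y}(-H).
\]
Second, relative duality for the finite map $\pi$ (using $\pi^{!}\omega_{\mathbb{P}^{2}} = \omega_{Y}$) yields
\[
\omega_{A} \;=\; \mathcal{H}\kern -.5pt om_{\mathcal{O}_{\mathbb{P}^{2}}}(\pi_{*}(\mathcal{O}_{Y} \oplus L_{\sigma}),\, \omega_{\mathbb{P}^{2}}) \;\cong\; \pi_{*}\mathcal{H}\kern -.5pt om_{\mathcal{O}_{Y}}(\mathcal{O}_{Y} \oplus L_{\sigma},\, \mathcal{O}_{Y}(-H)).
\]
Splitting the internal Hom gives one summand $\mathcal{O}_{Y}(-H)$ and a second summand which I would identify with $L_{\sigma} \otimes \mathcal{O}_{Y}(-H)$ by using the cyclic-cover relation $\psi : L \otimes \sigma^{*}L \xrightarrow{\sim} \mathcal{O}_{Y}$ (equivalently $\sigma^{*}L \cong L^{-1}$) together with the $\sigma$-invariance of $\mathcal{O}_{Y}(-H) = \pi^{*}\mathcal{O}_{\mathbb{P}^{2}}(-1)$ and the identity $\pi_{*} \circ \sigma^{*} = \pi_{*}$. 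The resulting direct sum $\mathcal{O}_{Y}(-H) \oplus L_{\sigma}(-H)$ is exactly the $\mathcal{O}_{Y}$-module underlying $A \otimes_{Y} \mathcal{O}_{Y}(-H)$, producing an isomorphism of $\mathcal{O}_{\mathbb{P}^{2}}$-modules.

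The main obstacle will be promoting this sheaf-level identification to an isomorphism of $A$-bimodules. The left $A$-action on $\omega_{A}$ arises from the right $A$-action on $A$ (and vice versa, via $\mathcal{H}\kern -.5pt om$), whereas the bimodule structure on $A \otimes_{Y} \mathcal{O}_{Y}(-H)$ comes from two-sided multiplication on $A$ (tensoring with the central line bundle $\pi^{*}\mathcal{O}_{\mathbb{P}^{2}}(-1)$ disturbs neither action). Checking compatibility amounts to verifying that the duality isomorphism above intertwines the Clifford-type multiplication $(s,t)(s',t') = (ss' + \psi(t \otimes \sigma t'),\, st' + \sigma(s')t)$ with the corresponding dual pairings. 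The key inputs are the standard involution $\iota$ of Proposition~\ref{left and right}, which controls how left and right $A$-actions swap under $\mathcal{H}\kern -.5pt om$, and the symmetry built into the cyclic-algebra datum $\psi$. A clean way to close the argument is to restrict to the Azumaya locus $U = \mathbb{P}^{2} \setminus R$, where both $\omega_{A}|_{U}$ and $(A \otimes_{Y} \mathcal{O}_{Y}(-H))|_{U}$ are invertible $A|_{U}$-bimodules pinned down by their rank-one central data; since both sides are reflexive $A$-bimodules on $\mathbb{P}^{2}$, any such isomorphism over $U$ extends uniquely to all of $\mathbb{P}^{2}$.
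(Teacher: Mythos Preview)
The paper does not supply its own proof of this statement; it is quoted verbatim from \cite[Proposition~4.1]{chan2011moduli} and used as a black box. So there is no in-paper argument to compare against, and your proposal has to be judged on its own.

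At the level of $\mathcal{O}_{\mathbb{P}^{2}}$-modules your computation is correct: Riemann--Hurwitz gives $\omega_{Y}\cong\mathcal{O}_{Y}(-H)$, Grothendieck duality for the finite map $\pi$ identifies $\omega_{A}$ with $\pi_{*}\bigl(\mathcal{O}_{Y}(-H)\oplus L^{-1}(-H)\bigr)$, and the relation $\sigma^{*}L\cong L^{-1}$ together with $\pi_{*}\circ\sigma^{*}=\pi_{*}$ matches this with $\pi_{*}\bigl(\mathcal{O}_{Y}(-H)\oplus L(-H)\bigr)=A\otimes_{Y}\mathcal{O}_{Y}(-H)$.

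The gap is entirely in the bimodule upgrade, which you rightly flag as the main obstacle but do not actually carry out. Your proposed shortcut---restrict to the Azumaya locus $U=\mathbb{P}^{2}\setminus R$, use that invertible $A|_{U}$-bimodules are determined by a central line bundle in $\operatorname{Pic}(U)$, then extend by reflexivity---requires you to compute those central line bundles and check they agree in $\operatorname{Pic}(U)\cong\mathbb{Z}/4\mathbb{Z}$. You never do this, and it is not a formality: the $\mathcal{O}_{\mathbb{P}^{2}}$-module isomorphism you wrote down passes through the identification $\pi_{*}L^{-1}\cong\pi_{*}L$ via $\sigma^{*}$, and because left multiplication in the cyclic algebra $\mathcal{O}_{Y}\oplus L_{\sigma}$ is only $\mathcal{O}_{\mathbb{P}^{2}}$-linear (not $\mathcal{O}_{Y}$-linear), this $\sigma$-twist does not automatically intertwine both $A$-actions. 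The honest route is the one you mention first---verify directly, using the explicit multiplication $(s,t)(s',t')=(ss'+\psi(t\otimes\sigma t'),\,st'+\sigma(s')t)$, that the duality isomorphism is compatible with left and right multiplication---and that verification is essentially the entire content of the proposition. The proof in \cite{chan2011moduli} proceeds by such a direct computation with the cyclic-algebra presentation rather than by an Azumaya-locus argument.
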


\begin{proposition}

    $A\otimes_{Y}\mathcal{O}_{Y}(H)$ is an exceptional object in  $D^{b}(\mathbb{P}^{2}, A).$
    
    \end{proposition}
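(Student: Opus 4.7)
My plan is to reduce the self-Ext computation for $A \otimes_Y \mathcal{O}_Y(H)$ to the cohomology of $A$ on $Y$ via the standard adjunction between $A \otimes_Y (-)$ and the restriction-of-scalars functor. This adjunction was already used (via \cite[Proposition 2.6]{chan2011moduli}) in Case (iv) of the proof of Lemma \ref{EXTA1}, and it says that for any $\mathcal{O}_Y$-module $M$ and left $A$-module $N$,
\[
\operatorname{Ext}^i_A(A \otimes_Y M,\, N) \;\cong\; \operatorname{Ext}^i_Y(M,\, N).
\]
Applied to $M = \mathcal{O}_Y(H)$ and $N = A \otimes_Y \mathcal{O}_Y(H)$, this gives
\[
\operatorname{Ext}^i_A(A \otimes_Y \mathcal{O}_Y(H),\, A \otimes_Y \mathcal{O}_Y(H)) \;\cong\; H^i(Y,\, A).
\]

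Next, I would split using $A \cong \mathcal{O}_Y \oplus L$ as $\mathcal{O}_Y$-modules. Recall we have normalized $E = E_1$ and $\sigma(E') = L_{12}$, so by Lemma \ref{des blow} we have $E' = C_{12}$, and hence $L = \mathcal{O}_Y(E_1 - C_{12})$. The summand $H^i(Y,\mathcal{O}_Y)$ is $k$ for $i=0$ and zero otherwise because $Y$ is rational (already used in Proposition \ref{Chi}). So everything reduces to showing
\[
H^i\!\left(Y,\, \mathcal{O}_Y(E_1 - C_{12})\right) \;=\; 0 \qquad \text{for } i=0,1,2.
\]

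To verify this, I would use the same intersection-theoretic toolkit employed throughout Section \ref{section 4}. Note that $E_1$ and $C_{12}$ are disjoint (so $E_1 \cdot C_{12} = 0$), each has self-intersection $-1$, and $(E_1 - C_{12}) \cdot H = 1 - 1 = 0$. For $H^0$, if there were an effective $D \in |E_1 - C_{12}|$ then $D \cdot H = 0$ would force $D = 0$ (since $H = -K_Y$ is ample on the del Pezzo $Y$), but $E_1 - C_{12}$ is nonzero in $\operatorname{Pic}(Y)$. For $H^2$, Serre duality gives $H^2(Y,\mathcal{O}_Y(E_1 - C_{12})) \cong H^0(Y,\mathcal{O}_Y(C_{12} - E_1 - H))^*$, and the class $C_{12} - E_1 - H$ has intersection $-2$ with $H$, so by Lemma \ref{useful remark} the space vanishes. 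For $H^1$, Riemann–Roch with $(E_1 - C_{12})^2 = -2$ and $(E_1 - C_{12}) \cdot H = 0$ gives
\[
\chi(\mathcal{O}_Y(E_1 - C_{12})) \;=\; 1 + \tfrac{1}{2}(E_1 - C_{12})(E_1 - C_{12} + H) \;=\; 0,
\]
so $h^1 = h^0 + h^2 - \chi = 0$.

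Putting the pieces together, $\operatorname{Hom}_A(A \otimes_Y \mathcal{O}_Y(H), A \otimes_Y \mathcal{O}_Y(H)) = H^0(Y,A) = k$ and all higher Ext groups vanish, which is exactly the exceptionality condition. I do not anticipate a real obstacle in this argument: everything is a direct consequence of the adjunction together with standard divisor calculations on the degree-$2$ del Pezzo $Y$ that are already established in the earlier lemmas of Section \ref{section 4}.
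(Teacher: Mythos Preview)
Your proof is correct and follows essentially the same route as the paper: both apply the adjunction from \cite[Proposition 2.6]{chan2011moduli} to reduce to $H^i(Y,\mathcal{O}_Y)\oplus H^i(Y,L)$, then kill $H^i(Y,L)$ via $H^0$ and $H^2$ vanishing by linear-system/intersection arguments and $H^1$ vanishing by Riemann--Roch. The only cosmetic difference is that you argue $H^0(Y,L)=0$ via ampleness of $H=-K_Y$ on the del Pezzo $Y$, whereas the paper simply asserts $|E-E'|=\emptyset$.
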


   \begin{proof}
   By \cite[Proposition 2.6]{chan2011moduli}, we have
\begin{align*}
\operatorname{Ext}_{A}^{i}(A\otimes_{Y}\mathcal{O}_{Y}(H), A\otimes_{Y}\mathcal{O}_{Y}(H))
&=\operatorname{Ext}_{Y}^{i}(\mathcal{O}_{Y}(H), \mathcal{O}_{Y}(H)\oplus L\otimes_{Y}\mathcal{O}_{Y}(H)  )\\
&=H^{i}(Y,\mathcal{O}_{Y} ) \oplus H^{i}(Y, L ) .
\end{align*}
We showed earlier that $|E-E'|=\emptyset$ and  $|E'-E-H|=\emptyset$. So we have $$H^{0}(Y, L)=H^{0}(Y,\mathcal{O}_{Y}(E-E'))=0,$$  and by Serre duality, $$H^{2}(Y,L)=H^{0}(Y, \mathcal{O}_{Y}(E'-E-H))^{*}=0. $$ 
Since $\chi(Y,L)=\frac{1}{2}(E-E')(E-E'+H) +1=0,$ we have that $H^{1}(Y,L)=0$. So we have the following:
\begin{equation}
\operatorname{Ext}_{A}^{i}(A\otimes_{Y}\mathcal{O}_{Y}(H), A\otimes_{Y}\mathcal{O}_{Y}(H)) =
    \begin{cases}
      k & \text{$i=0$,}\\
      0 & \text{$i\neq 0$.}
    \end{cases}       
\end{equation}
This shows that $A\otimes_{Y}\mathcal{O}_{Y}(H)$ is an exceptional object in $D^{b}(\mathbb{P}^{2}, A).$      
    \end{proof}

\begin{lemma} \label{ Lemma 5.3 }
For any $p\in Y$, we have that $$\operatorname{Ext}_{Y}^{1}(\mathcal{O}_{Y}(H), I_{p}\mathcal{O}_{Y}(F) )=k.$$

\end{lemma}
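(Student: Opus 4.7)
The plan is to apply the functor $\operatorname{Hom}_Y(\mathcal{O}_Y(H), -)$ to the short exact sequence (\ref{stucture of F }) defining $I_p\mathcal{O}_Y(F)$, and to extract $\operatorname{Ext}^1$ from the resulting long exact sequence. Since $\mathcal{O}_Y(H)$ is a line bundle, $\operatorname{Ext}_Y^i(\mathcal{O}_Y(H), \mathcal{G}) \cong H^i(Y, \mathcal{G}\otimes\mathcal{O}_Y(-H))$, so the computation reduces to determining $H^0(Y, \mathcal{O}_Y(F-H))$, $H^1(Y, \mathcal{O}_Y(F-H))$, and $H^0(\mathcal{O}_p) = k$.

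The main step is to show that both of these cohomology groups vanish. For $H^0(Y, \mathcal{O}_Y(F-H))$, I would pick a generic line $l\subset\mathbb{P}^2$, note that $\phi^{-1}(l)^2 = 1 \geq 0$, and compute the intersection $\phi^{-1}(l)\cdot (F-H)$ using $F = E_1 + L_{12}$ and $H = -K_Y = 3L - \sum_{i=1}^{7} E_i$; this intersection is negative, so Lemma \ref{useful remark} forces $|F-H| = \emptyset$ and hence $H^0 = 0$. For $H^2(Y, \mathcal{O}_Y(F-H))$, Serre duality together with $\omega_Y \cong \mathcal{O}_Y(-H)$ give $H^2 \cong H^0(Y, \mathcal{O}_Y(-F))^*$, and an analogous intersection calculation (for instance $H\cdot(-F) = -2$ versus $H^2 = 2$) shows $|-F| = \emptyset$. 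Then Hirzebruch--Riemann--Roch on $Y$ yields
$$\chi(\mathcal{O}_Y(F-H)) = 1 + \tfrac{1}{2}(F-H)\cdot F = 1 + \tfrac{1}{2}(F^2 - H\cdot F) = 1 + \tfrac{1}{2}(0 - 2) = 0,$$
using $F^2 = (E + \sigma E')^2 = 0$ (already observed in the proof of Proposition \ref{Chi}) and $H\cdot F = 2$ from Lemma \ref{HH}. Combined with the vanishings of $H^0$ and $H^2$, this forces $H^1(Y, \mathcal{O}_Y(F-H)) = 0$.

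With these vanishings, the long exact sequence collapses to
$$0 \longrightarrow H^0(\mathcal{O}_p) \longrightarrow \operatorname{Ext}^1_Y(\mathcal{O}_Y(H), I_p\mathcal{O}_Y(F)) \longrightarrow 0,$$
so the \emph{Ext} group in question is isomorphic to $k$. The only mildly delicate point is the bookkeeping needed to translate between $F = E + \sigma E'$, its form $E_1 + L_{12} = L - E_2$ in the blow-up picture, and $H = 3L - \sum_{i=1}^{7} E_i$ in order to check effectivity of the relevant divisor classes; beyond this, there is no genuine obstacle.
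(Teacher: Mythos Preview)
Your proposal is correct and follows essentially the same route as the paper: apply $\operatorname{Hom}_{Y}(\mathcal{O}_{Y}(H),-)$ to the short exact sequence \eqref{stucture of F }, use effectivity/intersection arguments and Serre duality to kill $H^{0}$ and $H^{2}$ of $\mathcal{O}_{Y}(F-H)$, then Riemann--Roch to kill $H^{1}$, so that the long exact sequence identifies the desired $\operatorname{Ext}^{1}$ with $\operatorname{Hom}_{Y}(\mathcal{O}_{Y}(H),\mathcal{O}_{p})=k$. The only difference is cosmetic: the paper simply asserts $|F-H|=\emptyset$ (this having already appeared in the proof of Lemma~\ref{Ex2}), whereas you spell out the intersection with $\phi^{-1}(l)$ explicitly.
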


\begin{proof}

Applying the functor $\operatorname{Hom}_{Y}(\mathcal{O}_{Y}(H), \ \cdot \ )$ to the sequence \ref{stucture of F },  we have the following long exact sequence:
\begin{alignat*}{2}
\cdots &\longrightarrow   \operatorname{Hom}_{Y}(\mathcal{O}_{Y}(H),\mathcal{O}_{Y}(F)) &&\longrightarrow\operatorname{Hom}_{Y}(\mathcal{O}_{Y}(H), \mathcal{O}_{p} )   \\
& \longrightarrow  \operatorname{Ext}_{Y}^{1}(\mathcal{O}_{Y}(H), I_{p}\mathcal{O}_{Y}(F) )  
        &&\longrightarrow \operatorname{Ext}_{Y}^{1}(\mathcal{O}_{Y}(H),\mathcal{O}_{Y}(F)) \longrightarrow \cdots.
\end{alignat*}  
It is clear that $|F-H|=\emptyset$ and $|-F|=\emptyset$. Hence, we get that $\operatorname{Hom}_{Y}(\mathcal{O}_{Y}(H),\mathcal{O}_{Y}(F))=0$, and by Serre duality, $H^{2}(Y, \mathcal{O}_{Y}(F-H) )=H^{0}(Y, \mathcal{O}_{Y}(-F) )^{*}=0$. Since $$\chi(Y, \mathcal{O}_{Y}(F-H))=\frac{1}{2}(F-H)F+1=0,$$ we conclude that $H^{1}(Y,\mathcal{O}_{Y}(F-H) )=0$. So we get
$$\operatorname{Ext}_{Y}^{1}(\mathcal{O}_{Y}(H), I_{p}\mathcal{O}_{Y}(F) ) \simeq\operatorname{Hom}_{Y}(\mathcal{O}_{Y}(H), \mathcal{O}_{p} ) =k.$$

\end{proof}

\begin{lemma} \label{adjoint}
    The functor $$\Phi_{\mathcal{E}_{A}}: D^{b}(C) \longrightarrow D^{b}(\mathbb{P}^{2}, A)$$ has  a left adjoint $\Phi_{\mathcal{P}_{L}}$ and a right adjoint $\Phi_{\mathcal{P}_{R}}$ for some $\mathcal{P}_L$ and $\mathcal{P}_R \in D^b(\pi_2^* A)$ where $$\pi_2: C \times {\mathbb P}^2 \longrightarrow {\mathbb P}^2$$ is the projection.
    
    \end{lemma}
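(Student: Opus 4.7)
The plan is to construct both adjoints explicitly as Fourier--Mukai transforms, with kernels on $C\times \mathbb{P}^{2}$ obtained from $\mathcal{E}_{A}$ by applying appropriate dualities. Throughout, I write $p=\pi_{1}$ and $q=\pi_{2}$ for the two projections from $C\times \mathbb{P}^{2}$. The crucial input, which makes every derived duality below well-behaved, is that each $A$-line bundle is locally projective over $A$ by Definition \ref{line bundle}; consequently $\mathcal{E}_{A}$ is locally projective as a left $q^{*}A$-module on $C\times \mathbb{P}^{2}$, so that $\mathcal{E}_{A}^{\vee/A}:=R\mathcal{H}om_{q^{*}A}(\mathcal{E}_{A},\,q^{*}A)$ is a perfect complex carrying a right $q^{*}A$-action, equivalently (via the isomorphism $A\cong A^{\circ}$ of Proposition \ref{left and right}) a left $q^{*}A$-action. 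Hence $\mathcal{E}_{A}^{\vee/A}\in D^{b}(C\times\mathbb{P}^{2},\,q^{*}A)$.

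For the right adjoint I would set
\[
\mathcal{P}_{R} \;:=\; \mathcal{E}_{A}^{\vee/A}\otimes_{\mathcal{O}_{C\times \mathbb{P}^{2}}} p^{*}\omega_{C}[1] \;\in\; D^{b}(C\times \mathbb{P}^{2},\,q^{*}A),
\]
and define $\Phi_{\mathcal{P}_{R}}(\mathcal{G}):=Rp_{*}\bigl(\mathcal{P}_{R}\otimes_{q^{*}A}^{L}q^{*}\mathcal{G}\bigr)$. The adjunction $\Phi_{\mathcal{E}_{A}}\dashv \Phi_{\mathcal{P}_{R}}$ would then follow from the chain
\[
\operatorname{Hom}_{D^{b}(\mathbb{P}^{2},A)}(\Phi_{\mathcal{E}_{A}}\mathcal{F},\,\mathcal{G})
\cong \operatorname{Hom}_{q^{*}A}(p^{*}\mathcal{F}\otimes \mathcal{E}_{A},\,q^{!}\mathcal{G})
\cong \operatorname{Hom}_{\mathcal{O}}\!\bigl(p^{*}\mathcal{F},\,R\mathcal{H}om_{q^{*}A}(\mathcal{E}_{A},q^{!}\mathcal{G})\bigr)
\cong \operatorname{Hom}_{D^{b}(C)}(\mathcal{F},\,\Phi_{\mathcal{P}_{R}}\mathcal{G}),
\]
invoking in order: (i) relative Grothendieck duality $Rq_{*}\dashv q^{!}$ on $q^{*}A$-modules, where $q^{!}(-)=q^{*}(-)\otimes p^{*}\omega_{C}[1]$ since $q$ is smooth of relative dimension one; (ii) tensor--Hom adjunction for left $q^{*}A$-modules; (iii) $p^{*}\dashv Rp_{*}$, after using local projectivity of $\mathcal{E}_{A}$ to rewrite $R\mathcal{H}om_{q^{*}A}(\mathcal{E}_{A},q^{!}\mathcal{G})\cong \mathcal{P}_{R}\otimes_{q^{*}A}^{L}q^{*}\mathcal{G}$. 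For the left adjoint I would run the same chain combined with noncommutative Serre duality on $\mathbb{P}^{2}$, whose Serre functor is $-\otimes_{A}^{L}\omega_{A}[2]$ by Proposition \ref{canonical} and the Serre duality recalled in Section \ref{section 2}, together with the Serre functor $-\otimes\omega_{C}[1]$ on $D^{b}(C)$. The output is $\Phi_{\mathcal{P}_{L}}$ for the explicit kernel
\[
\mathcal{P}_{L}\;:=\;\mathcal{E}_{A}^{\vee/A}\otimes_{q^{*}A}^{L} q^{*}\omega_{A}\otimes p^{*}\omega_{C}^{-1}[1],
\]
up to a cohomological shift coming from the two Serre functors.

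The main technical obstacle is justifying the noncommutative relative Grothendieck duality on $q^{*}A$-modules used in step (i). Since $q^{*}A$ is locally free of finite rank as an $\mathcal{O}_{C\times \mathbb{P}^{2}}$-module and $q$ is smooth and proper, this is obtained either by tensoring the ordinary commutative duality against the $q^{*}A$-bimodule structure, or by directly appealing to the framework of \cite{kuznetsov2008derived} for derived categories of noncommutative varieties; alternatively, one may observe that $D^{b}(C)$ and $D^{b}(\mathbb{P}^{2},A)$ are both $\operatorname{Ext}$-finite of finite cohomological dimension over $k$, so that the existence of one adjoint formally implies existence of the other, with the kernel recovered by a Bondal--Orlov style representability argument. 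Once this point is granted, every step above is a routine manipulation.
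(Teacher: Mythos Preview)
Your approach is correct but genuinely different from the paper's. The paper argues indirectly: it uses Kuznetsov's fully faithful embedding $\Psi: D^{b}(\mathbb{P}^{2},A)\hookrightarrow D^{b}(X_{A})$ into the derived category of the associated conic bundle, notes that the composite $\Psi\circ\Phi_{\mathcal{E}_{A}}$ is an ordinary Fourier--Mukai transform between smooth projective varieties (Lemma \ref{Fourier-Mukai}), invokes the standard existence of adjoints for such transforms, and then restricts these adjoints along $\Psi$. Your route is intrinsic: you build the kernels directly on $C\times\mathbb{P}^{2}$ from $\mathcal{E}_{A}$ via noncommutative Grothendieck duality and Serre functors. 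The paper's argument is shorter and sidesteps the need to justify relative duality for $q^{*}A$-modules, at the cost of depending on the external geometry of $X_{A}$; your argument is self-contained and yields explicit kernels, which is more informative if one actually wants to compute with the adjoints, but it requires the noncommutative duality input you flag at the end (this is indeed available in \cite{kuznetsov2008derived}).

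One small slip: in your displayed formula for $\mathcal{P}_{L}$ the factor $p^{*}\omega_{C}^{-1}$ should not appear. Carrying through the Serre functor computation $S_{C}^{-1}\circ\Phi_{\mathcal{P}_{R}}\circ S_{A}$ with $S_{C}=-\otimes\omega_{C}[1]$ and $S_{A}=\omega_{A}\otimes_{A}-[2]$, the two copies of $\omega_{C}$ coming from $\mathcal{P}_{R}$ and from $S_{C}^{-1}$ cancel, leaving $\mathcal{P}_{L}\simeq \mathcal{E}_{A}^{\vee/A}\otimes_{q^{*}A}^{L}q^{*}\omega_{A}[2]$. This is a bookkeeping issue rather than a gap, and your caveat ``up to a cohomological shift'' already signals awareness that the formula was not meant to be taken literally; but the discrepancy is a twist by a line bundle, not only a shift.
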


    \begin{proof}

By Lemma \ref{Fourier-Mukai}, we have the functor 
$\Phi_{\mathcal{P}}=\Psi\circ \Phi_{\mathcal{E}_{A}}: D^{b}(C)\to D^{b}(X_{A}).$ Here $X_A$ is the standard conic bundle associated to $A$ as in Section 
\ref{section 4}.
 By \cite[Proposition 5.9]{huybrechts2006fourier}, $\Phi_{\mathcal{P}}$ has a left adjoint $ \Phi_{\mathcal{P}_{L}}: D^{b}(X_{A})\to D^{b}(C)$ and a right adjoint $\Phi_{\mathcal{P}_{R}}: D^{b}(X_{A}) \to D^{b}(C)$. Since $\Psi$ is a fully faithful embedding, the restriction $\Phi_{\mathcal{P}_{L}}$ and $\Phi_{\mathcal{P}_{R}}$ to $D^{b}(\mathbb{P}^{2}, A )$, yields  a left adjoint and a right adjoint of $\Phi_{\mathcal{E}_{A}}$.
\end{proof}

\begin{proposition}\label{orthogonal}
    The image of $\Phi_{\mathcal{E}_{A}}: D^{b}(C)\to D^{b}(\mathbb{P}^{2}, A)  $ is right orthogonal to $A\otimes_{Y}\mathcal{O}_{Y}(H)$.
\end{proposition}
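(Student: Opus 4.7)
The plan is to use the left adjoint $\Phi_{\mathcal{E}_A}^L \colon D^b(\mathbb{P}^2, A) \to D^b(C)$ whose existence is guaranteed by Lemma \ref{adjoint}. By adjunction, the asserted right orthogonality of the image to $A \otimes_Y \mathcal{O}_Y(H)$ is equivalent to the vanishing $\Phi_{\mathcal{E}_A}^L(A \otimes_Y \mathcal{O}_Y(H)) = 0$ in $D^b(C)$. Since the skyscraper sheaves $\{\mathcal{O}_t : t \in C\}$ form a spanning class on the smooth projective curve $C$, this in turn is equivalent (again by adjunction and Lemma \ref{D-F-M}, which identifies $\Phi_{\mathcal{E}_A}(\mathcal{O}_t)$ with $\mathcal{E}_t$) to the pointwise vanishing
$$\operatorname{Ext}^i_A(A \otimes_Y \mathcal{O}_Y(H),\, \mathcal{E}_t) = 0 \quad \text{for all } t \in C \text{ and every } i \ge 0.$$

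The Ext groups on the left can be computed on $Y$: the free-functor adjunction $\operatorname{Ext}^i_A(A \otimes_Y N,\, M) \cong \operatorname{Ext}^i_Y(N,\, M)$ for an $\mathcal{O}_Y$-module $N$ (\cite[Proposition 2.6]{chan2011moduli}) turns the task into computing $H^i(Y, \mathcal{E}_t(-H))$. For $i = 0$, a slope argument suffices: the line bundle $\mathcal{O}_Y(H)$ has $\mu_H$-slope $H \cdot H = 2$, while $\mathcal{E}_t$ is $\mu_H$-semistable of slope $(F \cdot H)/2 = 1$ by Lemma \ref{semistable}, so no nonzero map can exist. For $i = 2$, Serre duality on $Y$ together with $\omega_Y \cong \mathcal{O}_Y(-H)$ gives $H^2(Y, \mathcal{E}_t(-H)) \cong \operatorname{Hom}_Y(\mathcal{E}_t, \mathcal{O}_Y)^*$, and the same semistability argument kills this: a nonzero map would embed a rank-one quotient of $\mathcal{E}_t$ (of slope $\ge 1$) into $\mathcal{O}_Y$, whose subsheaves have nonpositive slope.

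The remaining case $i = 1$ is extracted from an Euler-characteristic computation. A direct application of Hirzebruch--Riemann--Roch to $\mathcal{E}_t(-H)$, using $c_1(\mathcal{E}_t) = F$ with $F^2 = 0$ and $F \cdot H = 2$, $c_2(\mathcal{E}_t) = 1$, $H^2 = 2$, and $\operatorname{td}(Y) = 1 + H/2 + [\mathrm{pt}]$, yields $\chi(Y, \mathcal{E}_t(-H)) = 0$. Combined with the vanishings in degrees $0$ and $2$ above, this forces $H^1(Y, \mathcal{E}_t(-H)) = 0$. Nothing in this plan requires substantial new input; the one step requiring care is the adjoint reduction, since the skyscrapers $\{\mathcal{O}_t\}$ merely form a spanning class for $D^b(C)$ and do \emph{not} generate it as a triangulated subcategory (line bundles on $C$ are not iterated cones of torsion sheaves), so one cannot simply declare orthogonality for the whole image from the pointwise statement without invoking $\Phi_{\mathcal{E}_A}^L$.
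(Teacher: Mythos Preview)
Your proof is correct and follows the same overall architecture as the paper's: establish the pointwise vanishing $\operatorname{Ext}^i_A(A\otimes_Y\mathcal{O}_Y(H),\mathcal{E}_t)=0$ for all $t\in C$ and all $i$, then pass through the left adjoint $\Phi_{\mathcal{E}_A}^L$ and the spanning-class property of skyscrapers to conclude orthogonality for the whole image. Your closing remark about why the adjoint step is genuinely needed is exactly the point the paper is making when it writes $\Phi_{\mathcal{P}_L}(A\otimes_Y\mathcal{O}_Y(H))=0$.

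The differences are all in how the three vanishings are obtained. For $i=0$ and $i=2$ the paper argues on the $A$-side: it uses Proposition~\ref{Simple module} (nonzero maps of generically simple torsion-free $A$-modules are injective) plus a first-Chern-class comparison for $i=0$, and $A$-Serre duality with $\omega_A\cong A\otimes_Y\mathcal{O}_Y(-H)$ for $i=2$. You instead pass at once to $Y$ via the free-module adjunction and dispatch both cases with the $\mu_H$-semistability of $\mathcal{E}_t$ (Lemma~\ref{semistable}); this is shorter and uses only commutative information. For $i=1$ the paper invokes the explicit extension structure of $\mathcal{E}_t$ from Lemma~\ref{CK} together with the preparatory Lemma~\ref{ Lemma 5.3 } and chases a long exact sequence. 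Your Riemann--Roch computation $\chi(Y,\mathcal{E}_t(-H))=0$ (which I verified: $\operatorname{ch}(\mathcal{E}_t(-H))\cdot\operatorname{td}(Y)$ has degree-two part $2+\tfrac12(F-2H)\cdot H-1=0$) is more direct and, in fact, bypasses Lemma~\ref{ Lemma 5.3 } altogether. The trade-off is that the paper's route keeps the extension description of $\mathcal{E}_t$ visible, while yours is a cleaner numerical shortcut once $h^0=h^2=0$ are known.
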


\begin{proof}

First we claim that $\operatorname{Ext}_{A}^{i} (A\otimes_{Y}\mathcal{O}_{Y}(H), \mathcal{E}_{t})=0$
for any point $t\in C$ and any $i\in \mathbb{Z}$.
When $i<0$ or $i>2$, the proof is immediate since we are working over surfaces. 

Next we consider the case $i=0$.
 Let $\phi \in \operatorname{Hom}_{A}(A\otimes_{Y}\mathcal{O}_{Y}(H), \mathcal{E}_{t} ).$ If $\phi\neq 0$, then it is an injective $A$-module morphism $\phi: A\otimes_{Y}\mathcal{O}_{Y}(H)\hookrightarrow \mathcal{E}_{t}$ by Proposition \ref{Simple module}.  However, $c_{1}(\mathcal{E}_{t})-c_{1}(A\otimes_{Y}\mathcal{O}_{Y}(H))=L+H-(L+2H)=-H$ which is not effective. So this is impossible. Hence we have $\operatorname{Hom}_{A}(A\otimes_{Y}\mathcal{O}_{Y}(H),\mathcal{E}_{t} )=0$.

For $i=2$, by Proposition \ref{canonical}, we have that
\begin{align*}
\operatorname{Ext}_{A}^{2}(A\otimes_{Y}\mathcal{O}_{Y}(H), \mathcal{E}_{t})
&=\operatorname{Hom}_{A}(\mathcal{E}_{t}, \omega_{A}\otimes_{A}A\otimes_{Y}\mathcal{O}_{Y}(H) )^{*}\\
&=\operatorname{Hom}_{A}(\mathcal{E}_{t}, \omega_{A}\otimes_{Y}\mathcal{O}_{Y}(H))^{*}\\
&=\operatorname{Hom}_{A}(\mathcal{E}_{t},A )^{*}.
\end{align*}
Since $c_{1}(A)-c_{1}(\mathcal{E}_{t})=-H$ which can not be effective, by comparison of Chern classes, we have that $\operatorname{Hom}_{A}(\mathcal{E}_{t},A)=0$. So we have $\operatorname{Ext}_{A}^{2}(A\otimes_{Y}\mathcal{O}_{Y}(H), \mathcal{E}_{t})=0$.

For $i=1$, first recall that we have the following equality $$ \operatorname{Ext}_{A}^{i}(A\otimes_{Y}\mathcal{O}_{Y}(H), \mathcal{E}_{t} )=\operatorname{Ext}_{Y}^{i}(\mathcal{O}_{Y}(H), \mathcal{E}_{t} ).  $$
By  Lemma \ref{CK}, there is an exact sequence 
 $$ 0\longrightarrow \mathcal{O}_{Y} \longrightarrow \mathcal{E}_{t} \longrightarrow I_{p}\mathcal{O}_{Y} (F) \longrightarrow 0
     $$
     for some $p\in Y$. Thus we have following exact sequence: 
\begin{alignat*}{2}
        \cdots &\longrightarrow  \operatorname{Ext}_{Y}^{1}(\mathcal{O}_{Y}(H),\mathcal{O}_{Y} )   \longrightarrow \operatorname{Ext}_{Y}^{1}(\mathcal{O}_{Y}(H), \mathcal{E}_{t} ) &&\longrightarrow \operatorname{Ext}_{Y}^{1}(\mathcal{O}_{Y}(H), I_{p}\mathcal{O}_{Y}(F) )    \longrightarrow \\
        &\longrightarrow \operatorname{Ext}_{Y}^{2}(\mathcal{O}_{Y}(H),\mathcal{O}_{Y} ) \longrightarrow\operatorname{Ext}_{Y}^{2}(\mathcal{O}_{Y}(H), \mathcal{E}_{t} )&&\longrightarrow \cdots
    \end{alignat*} 
 Note that $\operatorname{Ext}_{Y}^{2}(\mathcal{O}_{Y}(H), \mathcal{O}_{Y} )=H^{0}(Y,\mathcal{O}_{Y} )^{*}=k.$   Since $|-H|=\emptyset$ and $\chi(Y, \mathcal{O}_{Y}(-H) )=1 $, we have that  $$H^{0}(Y, \mathcal{O}_{Y}(-H) )=0 \ \text{and} \operatorname{Ext}_{Y}^{1}( \mathcal{O}_{Y}(H), \mathcal{O}_{Y} )=0.$$ We also have $\operatorname{Ext}_{Y}^{2}(\mathcal{O}_{Y}(H), \mathcal{E}_{t} )=\operatorname{Ext}_{A}^{2}(A\otimes_{Y}\mathcal{O}_{Y}(H), \mathcal{E}_{t})=0$. Now using Lemma \ref{ Lemma 5.3 }, we get that
  $$ \operatorname{Ext}_{A}^{1}(A\otimes_{Y}\mathcal{O}_{Y}(H), \mathcal{E}_{t} )=\operatorname{Ext}_{Y}^{1}(\mathcal{O}_{Y}(H), \mathcal{E}_{t} )=0.  $$
  
By Lemma \ref{adjoint},  we get that 
$$\operatorname{Hom}_{D^{b}(C)}(\Phi_{\mathcal{P}_{L}}(A\otimes_{Y}\mathcal{O}_{Y}(H)), \mathcal{O}_{t}[i] )=\operatorname{Hom}_{D^{b}(\mathbb{P}^{2}, A)}(A\otimes_{Y}\mathcal{O}_{Y}(H), \Phi_{\mathcal{E}_{A}}( \mathcal{O}_{t}[i] ))=0$$
for all $i\in \mathbb{Z}$ and all $t\in C$, where $\Phi_{\mathcal{P}_{L}}$ is the left adjoint of $\Phi_{\mathcal{E}_{A}}$.  This shows that there are no non-zero objects in $D^{b}(C)$ which are right orthogonal to $\{  \mathcal{O}_{t}[i]  \}$ for any $i\in \mathbb{Z}$ and any $t\in C$. Since  $\left\{ \mathcal{O}_{t}[i] \right\}$ form a generating set for $D^b(C)$, we get that $\Phi_{\mathcal{P}_{L}}(A\otimes_{Y}\mathcal{O}_{Y}(H))=0$. Finally, for any $\mathcal{F}^{\bullet}\in D^{b}(C)$, $$\operatorname{Hom}_{ D^{b}(\mathbb{P}^{2}, A)}(A\otimes_{Y}\mathcal{O}_{Y}(H), \Phi_{\mathcal{E}_{A}}(\mathcal{F}^{\bullet}) )=\operatorname{Hom}_{D^{b}(C)}(\Phi_{\mathcal{P}_{L}}(A\otimes_{Y}\mathcal{O}_{Y}(H)), \mathcal{F}^{\bullet} )=0.$$
This shows that $D^{b}(C)$ is right orthogonal to the exceptional object $(A\otimes_{Y}\mathcal{O}_{Y}(H))$. This completes the proof.
\end{proof}

So far, we have proved that $\left\langle D^{b}(C), A\otimes_{Y} \mathcal{O}_{Y}(H)   \right\rangle $ is a triangulated subcategory of $D^{b}(\mathbb{P}^{2}, A)$. Next, we show that $ D^{b}(\mathbb{P}^{2},A ) =\left\langle D^{b}(C), A\otimes_{Y} \mathcal{O}_{Y}(H)  \right \rangle.$ Recall the following lemma:

\begin{lemma} [{\cite[Proposition 5.6]{bernardara2013derived}}] \label{Jacobian} There exists an exceptional object $E$ in $D^{b}(\mathbb{P}^{2}, A)$ such that $$D^{b}(\mathbb{P}^{2}, A )=\langle D^{b}(\Gamma), E\rangle,$$
\noindent where $\Gamma$ is a smooth projective curve such that $J(X_{A})\simeq J(\Gamma)$ as principally polarized abelian varieties.    
\end{lemma}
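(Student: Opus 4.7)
My plan is to combine Kuznetsov's semiorthogonal decomposition for conic bundles with a Prym-style categorical construction that realizes the intermediate Jacobian. First, I would record Kuznetsov's theorem (the same one that yields the embedding $\Psi'$ used earlier): for the standard conic bundle $f: X_A \to \mathbb{P}^2$ we have a semiorthogonal decomposition
\[
D^b(X_A) = \langle \Psi'(D^b(\mathbb{P}^2, A)), \ f^*\mathcal{O}_{\mathbb{P}^2}, \ f^*\mathcal{O}_{\mathbb{P}^2}(H), \ f^*\mathcal{O}_{\mathbb{P}^2}(2H) \rangle.
\]
Since $\Psi'$ is fully faithful, any semiorthogonal decomposition of $\Psi'(D^b(\mathbb{P}^2, A))$ transfers back to $D^b(\mathbb{P}^2, A)$, so it suffices to decompose this admissible subcategory of $D^b(X_A)$.

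Second, I would pin down the curve $\Gamma$. For a standard conic bundle over $\mathbb{P}^2$ ramified along a smooth quartic, the intermediate Jacobian $J(X_A)$ is a 2-dimensional principally polarized abelian variety, built as a Prym variety associated to the \'etale double cover of the discriminant determined by the Brauer class of $A$. Every indecomposable 2-dimensional p.p.a.v.\ is $J(\Gamma)$ for a unique smooth genus 2 curve $\Gamma$ (by the classical Torelli theorem), which produces the curve $\Gamma$ together with the required identification $J(X_A) \simeq J(\Gamma)$ of p.p.a.v.'s.

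Third, I would build a fully faithful Fourier-Mukai embedding $\Phi : D^b(\Gamma) \hookrightarrow \Psi'(D^b(\mathbb{P}^2, A))$. A natural candidate for the kernel comes from the Abel-Prym construction that realizes the isomorphism $J(\Gamma) \cong J(X_A)$ geometrically, for instance via a universal family of rulings of degenerate fibers of $X_A$ parametrized by $\Gamma$. Full faithfulness would then follow from Bondal-Orlov's criterion, reducing to Ext computations between point-like objects that are entirely analogous to those carried out in Proposition \ref{The most important}.

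Finally, the hard step: showing that the right-orthogonal $\Phi(D^b(\Gamma))^\perp$ inside $\Psi'(D^b(\mathbb{P}^2, A))$ is generated by a single exceptional object $E$. Additivity of Hochschild homology under semiorthogonal decompositions, applied to Kuznetsov's decomposition of $D^b(X_A)$ above and the known $HH_*(D^b(\Gamma))$, forces the orthogonal to have Hochschild homology of rank one concentrated in degree zero, so it is \emph{numerically} consistent with being generated by one exceptional object. The genuine obstacle is producing $E$ concretely and verifying that the subcategory generated by $\Phi(D^b(\Gamma))$ together with $E$ exhausts $\Psi'(D^b(\mathbb{P}^2, A))$, since a numerical count alone is not enough. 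I would address this by mutating a natural generator of $\Psi'(D^b(\mathbb{P}^2, A))$ (for instance the image of an $A$-line bundle) across $\Phi(D^b(\Gamma))$ and identifying its image with the sought-after exceptional object, then checking semiorthogonality and generation directly by testing against a generating set of $D^b(\mathbb{P}^2, A)$.
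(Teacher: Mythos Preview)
The paper does not prove this lemma; it is quoted from \cite[Proposition~5.6]{bernardara2013derived} and used as a black box in the proof of Theorem~\ref{semiorthgonal}. There is therefore no proof in the present paper to compare your proposal against.

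On the merits of your sketch: step~3 contains a concrete error. The rulings of the degenerate conics of $X_A$ are parametrized by the \'etale double cover $\tilde R \to R$ of the discriminant quartic; since $R$ is a smooth plane quartic of genus $3$, the curve $\tilde R$ has genus $5$, and it is \emph{not} the genus-$2$ curve $\Gamma$. The passage from the Prym datum $\tilde R/R$ to a genus-$2$ curve $\Gamma$ with $J(\Gamma)\cong\operatorname{Prym}(\tilde R/R)\cong J(X_A)$ is a nontrivial construction (classically via the net of conics, or a Recillas/tetragonal construction), and the Fourier--Mukai kernel you need has to be built from that construction, not from a ``universal family of rulings parametrized by $\Gamma$,'' which does not exist as stated. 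Step~2 also tacitly assumes that $J(X_A)$ is indecomposable as a principally polarized abelian variety, which you have not justified. Finally, the Hochschild-homology count in step~4 is, as you yourself note, only suggestive; in the cited reference the exceptional object and the embedding of $D^b(\Gamma)$ are produced simultaneously from the explicit geometry of the net of conics rather than by an a~posteriori mutation, so your proposed endgame is not how the argument actually runs.
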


\noindent Using this lemma, we prove the following main theorem:

\begin{theorem} \label{semiorthgonal}
We have the semiorthogonal decomposition:
$$ D^{b}(\mathbb{P}^{2},A ) =\left \langle D^{b}(C), A\otimes_{Y} \mathcal{O}_{Y}(H) \right  \rangle.$$
    
\end{theorem}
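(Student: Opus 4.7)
The plan is to combine the ingredients already assembled in Section \ref{section 5} with the decomposition in Lemma \ref{Jacobian} and the Torelli theorem for curves. I would set
\[
T := \langle \Phi_{\mathcal{E}_A}(D^{b}(C)),\; A \otimes_{Y} \mathcal{O}_{Y}(H) \rangle \subseteq D^{b}(\mathbb{P}^{2}, A),
\]
and observe that the bracket is already a genuine semiorthogonal decomposition of the triangulated subcategory $T$ it generates. Theorem \ref{The last one} supplies the fully faithfulness of $\Phi_{\mathcal{E}_A}$, Lemma \ref{adjoint} makes its image admissible, the exceptionality of $A \otimes_{Y} \mathcal{O}_{Y}(H)$ was established at the start of the section, and Proposition \ref{orthogonal} provides the required orthogonality. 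Thus the whole content of the theorem reduces to the equality $T = D^{b}(\mathbb{P}^{2}, A)$.

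To pin $T$ down inside $D^{b}(\mathbb{P}^{2}, A)$, I would apply Lemma \ref{Jacobian} to get a semiorthogonal decomposition $D^{b}(\mathbb{P}^{2}, A) = \langle D^{b}(\Gamma), E_0 \rangle$, where $\Gamma$ is a smooth projective curve with $J(\Gamma) \cong J(X_A)$ as principally polarized abelian varieties. By Theorem \ref{Jacobian main}, $J(X_A) \cong J(C)$, so $J(\Gamma) \cong J(C)$. Since $\dim J(X_A) = 2$, both $\Gamma$ and $C$ are smooth projective curves of genus two, and the Torelli theorem yields $\Gamma \cong C$. Consequently $D^{b}(\mathbb{P}^{2}, A)$ admits an abstract decomposition of the form $\langle D^{b}(C), \langle E_0 \rangle \rangle$.

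To transfer this to our concrete decomposition, I would use the admissibility of $\Phi_{\mathcal{E}_A}(D^{b}(C))$: its right orthogonal complement $\mathcal{N}$ exists and yields a semiorthogonal decomposition $D^{b}(\mathbb{P}^{2}, A) = \langle \Phi_{\mathcal{E}_A}(D^{b}(C)), \mathcal{N}\rangle$. Additivity of Hochschild homology (or the numerical Grothendieck group) over semiorthogonal decompositions, applied to this decomposition and to the one from Lemma \ref{Jacobian}, forces $\mathcal{N}$ to share the corresponding invariant with $D^{b}(k)$. Combined with the fact that $\mathcal{N}$ contains the exceptional object $A \otimes_{Y} \mathcal{O}_{Y}(H)$ and that $D^{b}(\mathbb{P}^{2}, A)$ is saturated, this compels $\mathcal{N}$ to be the admissible hull $\langle A \otimes_{Y} \mathcal{O}_{Y}(H) \rangle$, yielding $T = D^{b}(\mathbb{P}^{2}, A)$.

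The main obstacle lies precisely in this last step. The curve embedding produced by Bernardara is not a priori our Fourier--Mukai functor $\Phi_{\mathcal{E}_A}$, so one cannot naively identify the two decompositions; the bridge must be an additive invariant that is preserved by admissible inclusions and that detects when a complement is generated by a single exceptional object. Once this comparison is secured, everything else is formal: the Torelli step fixes the shape of the decomposition, Proposition \ref{orthogonal} places $A \otimes_{Y} \mathcal{O}_{Y}(H)$ in the right slot, and the identification $T = D^{b}(\mathbb{P}^{2}, A)$ is immediate.
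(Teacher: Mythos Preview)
Your overall strategy coincides with the paper's: split off the admissible image $\Phi_{\mathcal{E}_A}(D^b(C))$, write $D^b(\mathbb{P}^2,A)=\langle \Phi_{\mathcal{E}_A}(D^b(C)),\mathcal{N}\rangle$, invoke Lemma~\ref{Jacobian} together with Torelli to get an abstract decomposition $\langle D^b(C),E_0\rangle$, and then conclude that $\mathcal{N}=\langle A\otimes_Y\mathcal{O}_Y(H)\rangle$. You are also right that this last identification is the crux; the paper's own proof in fact glosses over precisely the point you isolate, asserting ``$\langle E\rangle={}^{\perp}D^b(C)$'' without explaining why Bernardara's embedding of $D^b(\Gamma)$ may be identified with $\Phi_{\mathcal{E}_A}$.

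That said, your proposed bridge via additive invariants does not close the gap. From additivity you only get that $\mathcal{N}$ has the Hochschild homology (or numerical $K$-group) of $D^b(k)$; writing $\mathcal{N}=\langle \mathcal{N}',\,A\otimes_Y\mathcal{O}_Y(H)\rangle$ then gives $HH_*(\mathcal{N}')=0$, but this does \emph{not} force $\mathcal{N}'=0$: phantom categories with vanishing additive invariants exist, and saturation of $D^b(\mathbb{P}^2,A)$ alone does not exclude them. To make the argument rigorous you would need either an independent reason why no phantom can sit inside $D^b(\mathbb{P}^2,A)$ (equivalently inside $D^b(X_A)$ for this particular conic bundle), or a direct comparison showing the two copies of $D^b(C)$ in $D^b(\mathbb{P}^2,A)$ have the same orthogonal complement.
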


\begin{proof}

We have that $ D^{b}(\mathbb{P}^{2},A ) =\left \langle D^{b}(C), \prescript{\perp}{}D^{b}(C) \right  \rangle$, where $\prescript{\perp}{}D^{b}(C)$ is the left orthogonal complement of $D^{b}(C)$ in $D^{b}(\mathbb{P}^{2},A)$. By Proposition \ref{orthogonal}, we know that $\left \langle A\otimes_{Y}\mathcal{O}_{Y}(H) \right \rangle \subseteq \prescript{\perp}{}D^{b}(C)$. 

By Theorem \ref{Jacobian main}, $J(C)\simeq J(X_{A})\simeq J(\Gamma)$ as principal polarized abelian varieties. Thus $C\simeq \Gamma$ and $D^{b}(C)\simeq D^{b}(\Gamma)$.
By Lemma \ref{Jacobian}$, \langle E \rangle=\prescript{\perp}{}D^{b}(C)$ in $D^{b}(\mathbb{P}^{2},A).$ Thus $$\left \langle A\otimes_{Y} \mathcal{O}_{Y}(H) \right \rangle \subseteq  \langle E \rangle .$$
So we have $$ \left \langle A\otimes_{Y} \mathcal{O}_{Y}(H)  \right \rangle =\left \langle E \right\rangle= \prescript{\perp}{}D^{b}(C).$$ This completes the proof of the theorem.

\end{proof}
\begin{remark}
It is possible to investigate similar questions for other exotic maximal orders. These questions are also related to rationality of the associated conic bundles. In some cases (such as the ones considered in this article), it is known that the associated conic bundle is rational. However, there are still some unknown cases.
\end{remark}

\section*{References}

\bibliographystyle{alpha}
\renewcommand{\section}[2]{} 
\bibliography{main}

\newcommand{\etalchar}[1]{$^{#1}$}
\begin{thebibliography}{Mum62}

\bibitem[AdJ03]{Artin2003orders}
Michael Artin and Aise~J de~Jong.
\newblock Stable orders over surfaces.
\newblock {\em preprint}, 2003.

\bibitem[AM72]{artin1972some}
Michael Artin and David Mumford.
\newblock Some elementary examples of unirational varieties which are not rational.
\newblock {\em Proceedings of the London Mathematical Society}, 3(1):75--95, 1972.

\bibitem[BB13]{bernardara2013derived}
Marcello Bernardara and Michele Bolognesi.
\newblock Derived categories and rationality of conic bundles.
\newblock {\em Compositio Mathematica}, 149(11):1789--1817, 2013.

\bibitem[Bea96]{beauville1996complex}
Arnaud Beauville.
\newblock {\em Complex algebraic surfaces}.
\newblock Number~34. Cambridge University Press, 1996.

\bibitem[BO95]{bondal1995semiorthogonal}
Alexei Bondal and Dmitri Orlov.
\newblock Semiorthogonal decomposition for algebraic varieties.
\newblock {\em arXiv preprint alg-geom/9506012}, 1995.

\bibitem[Cal00]{caldararu2000derived}
Andrei~Horia Caldararu.
\newblock {\em Derived categories of twisted sheaves on Calabi-Yau manifolds}.
\newblock Cornell University, 2000.

\bibitem[Cha05]{chan2005noncommutative}
Daniel Chan.
\newblock Noncommutative cyclic covers and maximal orders on surfaces.
\newblock {\em Advances in Mathematics}, 198(2):654--683, 2005.

\bibitem[CI05]{CI05}
Daniel Chan and Colin Ingalls.
\newblock The minimal model program for orders over surfaces.
\newblock {\em Inventiones mathematicae}, 161(2):427--452, 2005.

\bibitem[CI12]{chan2012conic}
Daniel Chan and Colin Ingalls.
\newblock Conic bundles and clifford algebras.
\newblock {\em Contemporary Math}, 562:53--76, 2012.

\bibitem[CK03]{CK03}
Daniel Chan and Rajesh~S Kulkarni.
\newblock Del pezzo orders on projective surfaces.
\newblock {\em Advances in Mathematics}, 173(1):144--177, 2003.

\bibitem[CK11]{chan2011moduli}
Daniel Chan and Rajesh~S Kulkarni.
\newblock Moduli of bundles on exotic del pezzo orders.
\newblock {\em American journal of mathematics}, 133(1):273--293, 2011.

\bibitem[DO88]{dolgachev1988point}
Igor~V Dolgachev and David Ortland.
\newblock {\em Point sets in projective spaces and theta functions}, volume 165.
\newblock Soci{\'e}t{\'e} math{\'e}matique de France, 1988.

\bibitem[Gie77]{Gie77}
David Gieseker.
\newblock On the moduli of vector bundles on an algebraic surface.
\newblock {\em Annals of Mathematics}, 106(1):45--60, 1977.

\bibitem[Gro59]{grothendieck1959technique}
Alexander Grothendieck.
\newblock Technique de descente et th{\'e}or{\`e}mes d'existence en g{\'e}om{\'e}trie alg{\'e}brique. i. g{\'e}n{\'e}ralit{\'e}s. descente par morphismes fid{\`e}lement plats.
\newblock {\em S{\'e}minaire Bourbaki}, 5:299--327, 1959.

\bibitem[HL10]{huybrechts2010geometry}
Daniel Huybrechts and Manfred Lehn.
\newblock {\em The geometry of moduli spaces of sheaves}.
\newblock Cambridge University Press, 2010.

\bibitem[HS05]{hoffmann2005moduli}
Norbert Hoffmann and Ulrich Stuhler.
\newblock Moduli schemes of generically simple azumaya modules.
\newblock {\em Documenta Mathematica}, 10:369--389, 2005.

\bibitem[Huy06]{huybrechts2006fourier}
Daniel Huybrechts.
\newblock {\em Fourier-Mukai transforms in algebraic geometry}.
\newblock Clarendon Press, 2006.

\bibitem[Kuz08]{kuznetsov2008derived}
Alexander Kuznetsov.
\newblock Derived categories of quadric fibrations and intersections of quadrics.
\newblock {\em Advances in Mathematics}, 218(5):1340--1369, 2008.

\bibitem[Ler13]{lerner2013line}
Boris Lerner.
\newblock Line bundles and curves on a del pezzo order.
\newblock {\em Journal of Algebra}, 387:117--143, 2013.

\bibitem[Lie07]{lieblich2007moduli}
Max Lieblich.
\newblock Moduli of twisted sheaves.
\newblock {\em Duke Math. J.}, 136(1):23--118, 2007.

\bibitem[Mar77]{Mar77}
Masaki Maruyama.
\newblock Moduli of stable sheaves, i.
\newblock {\em Journal of Mathematics of Kyoto University}, 17(1):91--126, 1977.

\bibitem[Mar78]{Mar78}
Masaki Maruyama.
\newblock Moduli of stable sheaves, ii.
\newblock {\em Journal of Mathematics of Kyoto University}, 18(3):557--614, 1978.

\bibitem[Mil80]{milne1980etale}
James~S Milne.
\newblock {\em Etale cohomology (PMS-33)}.
\newblock Princeton university press, 1980.

\bibitem[Muk84]{mukai1984moduli}
Shigeru Mukai.
\newblock On the moduli space of bundles on k3 surfaces. i.
\newblock {\em Vector bundles on algebraic varieties (Bombay, 1984)}, 11:341--413, 1984.

\bibitem[Mum62]{mumford1962projective}
David Mumford.
\newblock Projective invariants of projective structures and applications.
\newblock In {\em Proc. Internat. Congr. Mathematicians (Stockholm, 1962)}, pages 526--530, 1962.

\bibitem[Ree13]{reede2013moduli}
Fabian Reede.
\newblock {\em Moduli spaces of bundles over two-dimensional orders}.
\newblock PhD thesis, University of Göttingen, 2013.

\bibitem[Sar83]{sarkisov1983conic}
VG~Sarkisov.
\newblock On conic bundle structures.
\newblock {\em Mathematics of the USSR-Izvestiya}, 20(2):355, 1983.

\bibitem[Sho84]{Sho84}
Vyacheslav~Vladimirovich Shokurov.
\newblock Prym varieties: theory and applications.
\newblock {\em Mathematics of the USSR-Izvestiya}, 23(1):83, 1984.

\bibitem[Sim90]{simpson1990moduli}
CT~Simpson.
\newblock Moduli of representations of the fundamental group of a smooth variety.
\newblock {\em Preprint, Princeton University}, 1990.

\bibitem[Y{\etalchar{+}}06]{Y+06}
Kota Yoshioka et~al.
\newblock Moduli spaces of twisted sheaves on a projective variety.
\newblock {\em Moduli spaces and arithmetic geometry}, 45:1--30, 2006.

\end{thebibliography}

  \end{document}